 \DeclareFontFamily{U}{mathx}{\hyphenchar\font45}
 \DeclareFontShape{U}{mathx}{m}{n}{<-> mathx10}{}
 \DeclareSymbolFont{mathx}{U}{mathx}{m}{n}
 \DeclareMathAccent{\widebar}{0}{mathx}{"73}
\newtheorem{thm}{Theorem}[section]
\newtheorem{question}[thm]{Question}
\newtheorem{notation}[thm]{Notation}
\newtheorem{remark}[thm]{Remark}
\newtheorem{lemma}[thm]{Lemma}
\newtheorem{proposition}[thm]{Proposition}
\newtheorem{definition}[thm]{Definition}
\newtheorem{theorem}[thm]{Theorem}
\newtheorem{corollary}[thm]{Corollary}
\newcommand{\mA}{\mathbb{A}}
\newcommand{\mZ}{\mathbb{Z}}
\newcommand{\mL}{\mathbb{L}}
\newcommand{\mV}{\mathbb{V}}
\newcommand{\mr}[1]{\ensuremath{\mathrm{#1}}}
\newcommand{\s}[1]{\mathcal{#1}}
\newcommand{\sJ}{\s{J}}
\newcommand{\sH}{\s{H}}
\newcommand{\sK}{\s{K}}
\newcommand{\sV}{\s{V}}
\newcommand{\sO}{\s{O}}
\newcommand{\sF}{\s{F}}
\newcommand{\sG}{\s{G}}
\newcommand{\sT}{\s{T}}
\newcommand{\sP}{\s{P}}
\newcommand{\sI}{\s{I}}
\newcommand{\sM}{\s{M}}
\newcommand{\sS}{\s{S}}
\newcommand{\sE}{\s{E}}
\newcommand{\dsch}{d\mr{Sch}}
\newcommand{\pscoh}{P\mathrm{sCoh}}
\newcommand{\QC}{Q\mathrm{Coh}}
\newcommand{\spec}{\operatorname{Spec}}
\newcommand{\perf}{P\mathrm{erf}}
\newcommand{\bderive}{\mathrm{D}^{b}}
\newcommand{\mP}{\mathbb{P}}
\newcommand{\map}{\mr{Map}}
\newcommand{\dCh}{d\mr{CH}}
\newcommand{\dSch}{d\mr{Sch}}
\newcommand{\image}{\operatorname{Im}}
\newcommand{\Map}{Map}
\newcommand{\cofib}{\operatorname{cofib}}
\newcommand{\Td}{\mr{Td}}
\newcommand{\mQ}{\mathbb{Q}}
\newcommand{\dChQ}{\dCh_{\mQ}}
\newcommand{\mT}{\mathbb{T}}
\newcommand{\dcone}{M^\circ_{\widebar X} Y}
\newcommand{\holimproj}{\underleftarrow{\operatorname{\mathstrut holim}}}
\title{Grothendieck-Riemann-Roch for derived schemes}
\author{Parker Lowrey and Timo Sch\"urg}
\begin{document}

\maketitle
\abstract{We define bivariant algebraic K-theory and bivariant derived Chow on the homotopy category of derived schemes over a smooth base.  The orientation on the latter corresponds to virtual Gysin homomorphisms.  We then provide a morphism between these two bivariant theories and compare the two orientations.  This comparison then yields a homological and cohomological Grothendieck-Riemann-Roch formula for virtual classes.}

\section{Introduction}

In this paper we extend bivariant K-theory and bivariant operational Chow to derived schemes with quasi-projective underlying scheme over a smooth base.  We orient these theories along quasi-smooth, i.e.  ``virtually smooth'', morphisms.  We provide a morphism between the two theories and compare the two orientations.  The result is Grothendieck-Riemann-Roch formulas for quasi-smooth morphisms.   In it's most basic form (over a point and using structure morphisms), it recovers Kontsevich's formula for the virtual class of \cite[Section 1.4.2]{kontsevich}.  In addition to providing useful computational formulas, our results provide an example of how the use of derived schemes can explain much of the ``virtual'' phenomena by extensions of standard methods in classical intersection theory.

Although the context of this paper is in the realm of derived algebraic geometry, if we restrict to the subcategory of quasi-smooth schemes, our results can be treated as standalone formulas and easily applied to classical algebraic geometry through perfect obstruction theories.  For example, we provide a Grothendieck-Riemann-Roch formula for virtually smooth morphisms:
\begin{proposition}
Let $f: X \to Y$ be a proper morphism with $X$, $Y$ quasi-smooth (and quasi-projective) and let $[\sP] \in K([f])$.  Then
 \begin{displaymath}
ch(f_\ast([\sP])) \cap \Td(\mT_{Y/k}) \cap [Y]_{vir} = f_\ast(ch([\sP])\cap \Td(\mT_{X/k}) \cap [X]_{vir})
\end{displaymath}
\end{proposition}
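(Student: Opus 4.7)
The plan is to deduce this formula from two main tools established earlier in the paper: the bivariant Grothendieck--Riemann--Roch theorem for the Chern character transformation $\tau \colon K \to \dCh_\mQ$, which asserts that $\tau$ commutes with proper pushforward and is multiplicative for bivariant products; and the orientation comparison $\tau(\theta_g^K) = \Td(\mT_g) \cdot \theta_g^{\dCh}$ for each quasi-smooth morphism $g$, which relates the $K$-theoretic and Chow bivariant orientations via the Todd class of the relative virtual tangent complex. Writing $\pi_X \colon X \to \spec k$ and $\pi_Y \colon Y \to \spec k$ for the (quasi-smooth) structure morphisms, the virtual fundamental classes are $[X]_{vir} = \theta_{\pi_X}^{\dCh} \cap [\spec k]$ and $[Y]_{vir} = \theta_{\pi_Y}^{\dCh} \cap [\spec k]$.

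The main step is to apply $\tau$ to the bivariant projection formula $f_*([\sP] \cdot \theta_{\pi_Y}^K) = (f_*[\sP]) \cdot \theta_{\pi_Y}^K$ in $K([\pi_Y])$, where the product uses the composition $\pi_X = \pi_Y \circ f$. By bivariant GRR and multiplicativity of $\tau$, this lifts to an equality in $\dCh_\mQ([\pi_Y])$; capping with $[\spec k]$, the pushforward target $(f_*[\sP]) \cdot \theta_{\pi_Y}^K$ produces the left-hand side $ch(f_*[\sP]) \cap \Td(\mT_{Y/k}) \cap [Y]_{vir}$, using that $\tau$ restricts to the ordinary Chern character on $K(Y)$ and the orientation comparison applied to $\theta_{\pi_Y}^K$. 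On the source side, bivariant GRR applied to $[\sP]$ together with the multiplicativity of $\Td$ along the distinguished triangle $\mT_{X/Y} \to \mT_{X/k} \to f^*\mT_{Y/k}$ of virtual tangent complexes rewrites the resulting cap product into the desired form $f_*\bigl(ch([\sP]) \cap \Td(\mT_{X/k}) \cap [X]_{vir}\bigr)$.

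The main obstacle is this last identification: it requires reconciling the Todd class correction built into the bivariant Chern character $\tau([\sP])$ (governed by the relative virtual tangent complex of $f$) with the Todd classes $\Td(\mT_{X/k})$ and $\Td(\mT_{Y/k})$ of the ambient structure morphisms. This reconciliation is effected by the multiplicativity of $\Td$ in $K$-theory applied to the exact triangle of virtual tangent complexes for the composition $\pi_X = \pi_Y \circ f$; once this bookkeeping is in place the identity follows formally from the bivariant axioms and the pushforward/product compatibilities of $\tau$.
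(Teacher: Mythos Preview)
Your strategy is essentially the paper's: both rest on $\tau$ being a morphism of bivariant theories (Theorem~\ref{dch_bivariant}) and on the orientation comparison of Theorem~\ref{grr}, finished off with the multiplicativity of $\Td$ along the tangent triangle for $\pi_X = \pi_Y \circ f$. The only organizational difference is that the paper uses the $K$-orientation $[\sO_X]$ along $f$ rather than your $\theta_{\pi_Y}^K = [\sO_Y]$, caps with $[Y]_{vir}$ first, and then brings in $\Td(\mT_{Y/k})$ afterward via the projection formula.

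That said, one step in your sketch is too loose to stand as written. On the source side you must show
\[
\tau_f([\sP]) \cdot \tau_{\pi_Y}(\theta_{\pi_Y}^K) \cap [\spec k] \;=\; ch([\sP]) \cap \Td(\mT_{X/k}) \cap [X]_{vir},
\]
and you attribute this to ``bivariant GRR applied to $[\sP]$'' with the Todd correction in $\tau_f([\sP])$ ``governed by the relative virtual tangent complex of $f$.'' But the definition of $\tau_f([\sP])$ involves $\Td(i^\ast\mT_{S/Y})$ for an auxiliary factorization $f = s \circ i$, not $\Td(\mT_{X/Y})$; there is no general formula $\tau_f([\sP]) = ch(\sP) \cap \Td(\mT_{X/Y}) \cap [f^!]_{vir}$ for arbitrary $[\sP] \in K([f])$, and the orientation comparison of Theorem~\ref{grr} only produces such an expression for the single class $[\sO_X]$. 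The mechanism that actually extracts $ch(\sP)$---and this is what the paper does explicitly---is to write the class as a bivariant product $[\sP]_{K([id_X])} \cdot [\sO_X]$ and use multiplicativity of $\tau$ to split off $\tau_{id_X}([\sP]) = ch(\sP)$ from the orientation term, to which Theorem~\ref{grr} then applies. In your setup the cleanest version of this fix is to rewrite $[\sP] \cdot \theta_{\pi_Y}^K = [\sP]_{K([id_X])} \cdot \theta_{\pi_X}^K$ in $K([\pi_X])$ and apply the orientation comparison to the quasi-smooth structure morphism $\pi_X$, which yields $\Td(\mT_{X/k}) \cap [X]_{vir}$ in one step and in fact makes the separate appeal to the Todd triangle unnecessary.
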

\noindent Further, if $Y$ is smooth, then $[\sP] = \sum (-1)^i[\sH^i(\sP)]$ and the formula is reduced to a computation on the underlying scheme.  Many of these formulas appear in \S\ref{applications}; readers primarily interested in applications to virtual fundamental classes should consult this section.  

Recall that a bivariant theory is the process of assigning graded abelian groups to morphisms in a category in such a fashion that they satisfy many desirable ``cohomological'' and ``homological'' properties.  An operational bivariant theory is the process of taking a algebraic homology theory and formally bootstrapping it to a bivariant theory.  Thus, to define bivariant operational derived Chow, we only need to define the Chow group of a derived scheme, $\dCh(X)$.  Since the ``derived'' data of a derived scheme is isolated in the structure sheaf and the underlying topological space $\widebar{X}$ is a scheme, it is clear that the proper map $\iota_X: \widebar X \hookrightarrow X$ should induce an isomorphism: $\dCh(X) \cong \mr{CH}(\widebar X)$.  This fact ensures our operational bivariant derived Chow is a ``free'' extension of the classical bivariant theory with the advantage of including all quasi-smooth morphisms in the orientation.  For bivariant K-theory,  we extend the notion of relatively perfect as defined in \cite{SGA6} to derived schemes and take the associated Grothendieck group.  If we work over a smooth base Lemma \ref{sameoverafield} guarantees $K(X) = K(\widebar X)$. For reasons similar to the Chow theory, this ensures that the derived ``G-theory'' only picks up underlying topological information.  For general morphisms, the difference between $K([f]) = K([f \circ \iota_X])$ is still unclear.  

With these definitions in place, we use the local Chern character to define a morphism $\tau: K([f]) \to \dCh([f])$ for any morphism $f$ of quasi-projective derived schemes.  We then prove that this morphism is compatible with the bivariant structure.
\begin{theorem}
  $\tau: K \to \dChQ$ is a morphism of bivariant theories.
\end{theorem}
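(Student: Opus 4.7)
The plan is to verify the three defining axioms for a morphism of bivariant theories: that $\tau$ commutes with the bivariant product, with proper pushforwards, and with arbitrary pullbacks. Since $K([f])$ and $\dChQ([f])$ are both operational in nature, a class in either is a coherent family of maps on $K(-)$ (respectively $\dCh(-)$) indexed by base changes of $f$. Accordingly, each axiom reduces to a statement about how the local Chern character interacts with the corresponding operation on relatively perfect complexes: tensor product, proper pushforward of complexes, and derived base change.

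The product and pullback axioms should be essentially formal. For the product, the key input is multiplicativity of the local Chern character: if $\alpha \in K([X \to Y])$ and $\beta \in K([Y \to Z])$ are represented by relatively perfect complexes $\sP$ and $\s{Q}$, then the local Chern character of $\sP \otimes \s{Q}$ equals the cup product of the local Chern characters of $\sP$ and $\s{Q}$, so unwinding the two bivariant product definitions yields $\tau(\alpha \cdot \beta) = \tau(\alpha) \cdot \tau(\beta)$. For the pullback axiom, derived base-change compatibility of the local Chern character, combined with the fact that bivariant pullback in both theories is induced by base change of the representing complex, yields the identity directly.

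The main obstacle is the pushforward axiom, which carries the Grothendieck-Riemann-Roch content. My strategy is to reduce to the classical Baum-Fulton-MacPherson theorem by passing to the underlying classical schemes. By Lemma~\ref{sameoverafield} we have $K(X) \cong K(\widebar X)$, and by construction $\dCh(X) \cong \Ch(\widebar X)$; moreover, properness of $f$ is detected on $\widebar f : \widebar X \to \widebar Y$, so a derived proper pushforward translates to a classical proper pushforward on underlying schemes. The delicate point is that the derived pushforward $f_\ast$ remembers higher Tor information through the difference between $f_\ast \sO_X$ and $\widebar{f}_\ast \sO_{\widebar X}$; one must check that the Todd-class correction encoded in $\tau$ exactly accounts for this discrepancy, so that the desired bivariant identity reduces cleanly to classical GRR for $\widebar f$. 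I expect this verification to occupy the bulk of the proof, while the product and pullback axioms follow by short diagram chases.
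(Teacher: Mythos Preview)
Your proposal rests on two misconceptions that make the argument as stated unworkable.

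First, bivariant $K$-theory here is \emph{not} operational: $K([f])$ is defined as $K_0(\perf(f))$, the Grothendieck group of $f$-perfect complexes, not as a collection of maps indexed by base changes of $f$. So the framing ``a class in either is a coherent family of maps'' is wrong on the $K$ side, and the reductions you sketch do not apply uniformly to both theories.

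Second, and more seriously, $\tau_f$ is not simply the local Chern character. By definition, after choosing a factorization $f = s \circ i$ with $s$ smooth and $i$ closed, one has $\tau_f([\sF]) = \Td(i^\ast \mT_{S/Y}) \cap dch^S_X(\sF) \cap [s^\ast]$. The Todd factor and the smooth pullback are part of the data. Consequently the product axiom is not a one-line consequence of multiplicativity of $ch$: given $X \xrightarrow{f} Y \xrightarrow{g} Z$ with factorizations through smooth $S$ and $T$, you must produce a compatible factorization of $g \circ f$, track how the Todd classes of the relative tangent complexes combine, and reduce $dch^{S'}_X(\sF) \cdot dch^{T}_Y(\sG)$ to a single $dch$ for the composite. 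The paper carries this out by building a common smooth target, using base change and multiplicativity of $\Td$, and finally reducing to composable closed embeddings; even that last step is not formal and is handled by comparing operational $K$ and operational Chow over $\mQ$ via the isomorphism $\tau_f \otimes \mQ$ on structure morphisms.

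Your pushforward strategy also has a gap. You invoke Lemma~\ref{sameoverafield} to pass to underlying schemes, but that lemma only gives $K([f]) \cong K([t_0(f)])$ when the target is smooth over the base; for a general composite $X \xrightarrow{f} Y \to Z$ with $f$ proper, $Y$ need not be smooth, and indeed the paper explicitly flags whether $K([f]) \cong K([t_0(f)])$ holds in general as an open question. So the reduction to classical Baum--Fulton--MacPherson is not available. The paper instead argues directly: factor the proper map through a projective bundle $\mP_Y(\sE)$, split into the closed-immersion and projective-fibration cases, and for the latter use a generation lemma for $K(g \circ \pi')$ by classes $\sO_Y(n) \otimes \pi'^{\ast}\sG$ together with the explicit computation of $\pi'_\ast(ch(\sO(n)) \cap \Td \cap [\pi'^{\ast}])$. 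Your proposal for pullback is essentially correct and matches the paper.
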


We assign orientations to both of these theories on the class of quasi-smooth morphisms.  In the K-theory case, this orientation is $[\sO_X]$;  for the Chow theory, if $f: X \to Y$ is quasi-smooth the orientation is the virtual Gysin homomorphism $[f^!]_{vir}$ corresponding to the obstruction theory $(f, \iota_X^\ast \mL_{X/Y})$.  The Grothendieck-Riemann-Roch formulas are a result of the following comparison between the image of the K-theory orientation (under $\tau$) and the Chow orientation:
\begin{theorem}
  Let $f: X \to Y$ be quasi-smooth.  Then $\tau([\sO_X]) = \Td(\mT_{X/Y}) \cap [f^!]_{vir}$.
\end{theorem}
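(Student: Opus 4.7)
The plan is to reduce the theorem to two basic cases by exploiting the Zariski-local factorization of a quasi-smooth morphism. Every quasi-smooth $f: X \to Y$ may be written locally as $X \xrightarrow{i} V \xrightarrow{p} Y$ with $p$ smooth and $i$ a quasi-smooth closed immersion, realized as the derived zero locus of a regular section of a vector bundle on $V$. By the preceding theorem, $\tau$ is compatible with bivariant products; the Todd class is multiplicative along the fiber sequence $i^\ast \mT_{V/Y} \to \mT_{X/Y} \to \mT_{X/V}$; and both the K-theoretic orientation $[\sO]$ and the virtual Gysin $[f^!]_{vir}$ are multiplicative under composition. Combining these three observations, it suffices to verify the identity separately in the two cases: (a) $f$ smooth, and (b) $f$ a quasi-smooth closed immersion.

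The smooth case is essentially formal. Here the virtual Gysin reduces to ordinary flat pullback, $\mT_{X/Y}$ is the genuine relative tangent bundle, and the identifications $\dCh(X) \cong \Ch(\widebar X)$, $K(X) \cong K(\widebar X)$ pass the assertion to the bivariant Grothendieck-Riemann-Roch formula for smooth morphisms of classical schemes, due to Fulton-MacPherson.

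The substantive case is the quasi-smooth closed immersion $i: X \hookrightarrow V$. Since $i$ is quasi-smooth, $\mL_{X/V}[-1]$ is a locally free sheaf whose dual $\sN$ is the virtual normal bundle, so $[\mT_{X/V}] = -[\sN]$ in K-theory and $\Td(\mT_{X/V}) = \Td(\sN)^{-1}$. I would employ derived deformation to the normal cone $\dcone$, whose generic fibre is $i$ and whose special fibre is the zero section $s_0: X \hookrightarrow \mV(\sN)$ of an honest vector bundle. Because the virtual Gysin $[i^!]_{vir}$ is by construction a specialization to the normal cone, and because $\tau$ is bivariant and therefore commutes with the associated specialization map in K-theory, the identity for $i$ is reduced to the identity for $s_0$. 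For the zero section of a vector bundle, $[\sO_X]$ admits an explicit global Koszul resolution on $\mV(\sN)$, and a direct computation of its local Chern character produces $\Td(\sN)^{-1} \cap [s_0^!]$, matching the right-hand side.

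The main obstacle will be verifying that $\tau$ intertwines the K-theoretic and Chow-theoretic specialization maps used to define $[i^!]_{vir}$ in the derived setting; equivalently, that the diagram $\dcone \to \mA^1$ induces compatible specializations in both bivariant theories. Granted this compatibility, the Koszul computation on $\mV(\sN)$ is routine, and the reduction through the smooth--immersion factorization is formal from the bivariance of $\tau$ established above.
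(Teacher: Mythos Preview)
Your overall strategy coincides with the paper's: factor $f$ as a quasi-smooth closed immersion followed by a smooth map, use multiplicativity of $\tau$, of $\Td$, and of both orientations, and treat the two cases separately, the immersion case via deformation to the normal cone. Two points deserve correction.

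First, you invoke a \emph{Zariski-local} factorization. That is not enough: the identity you want is an equality of global bivariant classes in $\dChQ([f])$, and multiplicativity of $\tau$ only lets you split along a global composition. Under the standing quasi-projectivity hypothesis the paper supplies a \emph{global} factorization $f = s\circ i$ with $s$ smooth and $i$ a closed immersion (Lemma~\ref{factorizeit}); since $f$ is quasi-smooth and $s$ is smooth, the cotangent triangle forces $i$ to be a quasi-smooth immersion. Use that.

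Second, the smooth case does not require Fulton--MacPherson. It is literally the definition: taking the factorization $X\xrightarrow{\mathrm{id}}X\xrightarrow{f}Y$ one has $\tau_f([\sO_X])=\Td(\mT_{X/Y})\cap dch^X_X(\sO_X)\cap[f^\ast]$, and $dch^X_X(\sO_X)=\mathrm{id}$ while $[f^!]_{vir}=[f^\ast]$ for smooth $f$.

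For the immersion case your outline is right in spirit, but be careful with the special fiber. The paper does not produce a fully derived deformation whose special fiber is ``$X\hookrightarrow\mV(\sN)$''. Rather, it works over the \emph{classical} $M^\circ_{\widebar X}Y$ and constructs a perfect complex $\sO_{\widehat X}$ whose restriction to the special fiber is $\sO_{C_{\widebar X}Y}\otimes^L_{\sO_{\mV(\iota_X^\ast\mT_{X/Y})}}\sO_{\widebar X}$. After pushing forward along the proper map $C_{\widebar X}Y\to\mV(\iota_X^\ast\mT_{X/Y})$ you are reduced to the \emph{classical} regular embedding $\widebar X\hookrightarrow\mV(\iota_X^\ast\mT_{X/Y})$, where the Koszul computation you describe is indeed \cite[Corollary~18.1.2]{fulton_intersection}. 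The passage through the normal cone, not just the normal bundle, is exactly what makes $[f^!]_{vir}$ appear on the Chow side.
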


The following proposition is the main ingredient in the comparison.  It is a direct result of repackaging virtual information in the structure sheaf, rather than separate data (compare to \cite[Corollary 18.1.2]{fulton_intersection}):
\begin{proposition}
Let $f: X \to Y$ be a  quasi-smooth embedding.  Then $dch_X^Y(\sO_X) = \Td(\mT_{X/Y}) \cap  [f^!]_{vir}$.
\end{proposition}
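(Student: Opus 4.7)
The strategy is to reduce to a local Koszul model and then invoke the classical Chern-character computation for the Koszul complex. Since $f \colon X \to Y$ is a quasi-smooth embedding, the complex $\iota_X^* \mL_{X/Y}[-1]$ is locally free on $\widebar X$; writing it as $N^\vee$, we have $\mT_{X/Y} = -N$ in K-theory and hence $\Td(\mT_{X/Y}) = \Td(N)^{-1}$. Zariski-locally on $Y$ one can present $f$ as the derived vanishing locus of a section $s \in \Gamma(Y,E)$ of a vector bundle $E$ on $Y$ with $E|_{\widebar X} \cong N$. Both $dch^Y_X$ and $[f^!]_{vir}$ are local on $Y$ by the bivariant formalism, so I would reduce to the case in which such a Koszul presentation is global.

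In this situation $\sO_X$ is resolved as an $\sO_Y$-module by the Koszul complex $\wedge^\bullet E^\vee$, supported on the vanishing locus of $s$, and $dch^Y_X(\sO_X)$ is computed directly from this resolution via the standard combinatorial identity
\[
\sum_i (-1)^i ch(\wedge^i E^\vee) \;=\; \prod_j (1-e^{-x_j}) \;=\; \Td(E)^{-1}\cdot c_{\mathrm{top}}(E),
\]
where the $x_j$ are Chern roots of $E$. Combined with $\Td(\mT_{X/Y}) = \Td(E|_{\widebar X})^{-1}$, the proposition reduces to identifying the refinement of $c_{\mathrm{top}}(E)$ to classes supported on the zero locus of $s$ with the virtual Gysin $[f^!]_{vir}$ attached to the obstruction theory $\iota_X^* \mL_{X/Y} = N^\vee[1]$.

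This last identification is the principal obstacle. When $\widebar X$ is itself cut out regularly by $s$, it is the classical self-intersection formula for the zero section of $E$; in the genuinely derived case, one must match the Behrend--Fantechi virtual Gysin built from the intrinsic normal cone against the explicit Euler-class description coming from the Koszul resolution. I would handle this via the derived deformation to the (virtual) normal cone attached to $f$, which degenerates $f$ into the zero-section embedding $\widebar X \hookrightarrow N$ and is compatible both with the local Chern character (by continuity of bivariant classes under specialisation) and with $[f^!]_{vir}$ (by construction of the virtual Gysin through the intrinsic normal cone). On the central fibre the statement then becomes the tautological refined self-intersection formula for the zero section of a vector bundle, closing the argument.
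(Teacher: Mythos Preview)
Your overall strategy---Koszul presentation, the identity $\sum (-1)^i ch(\wedge^i E^\vee)=\Td(E)^{-1}c_{\mathrm{top}}(E)$, and a deformation-to-the-normal-cone argument to identify the refinement with $[f^!]_{vir}$---is the right circle of ideas, and in fact the paper ultimately invokes the same classical zero-section formula (Fulton, Corollary~18.1.2). The genuine gap is the sentence ``Both $dch^Y_X$ and $[f^!]_{vir}$ are local on $Y$ by the bivariant formalism, so I would reduce to the case in which such a Koszul presentation is global.'' Equality of operational bivariant classes in $\dChQ([f])$ cannot be checked Zariski-locally on $Y$: for $\sigma,\tau\in\dChQ([f])$ one must verify $\sigma^V_W=\tau^V_W$ as maps into $\dChQ(W)$, and the restriction $\dChQ(W)\to\prod_\alpha\dChQ(W_\alpha)$ is not injective (already $A_0(\mP^1)\to A_0(\mA^1)\times A_0(\mA^1)$ kills the class of a point). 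So you cannot simply pass to an affine chart where the Koszul presentation exists.

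The paper's proof is organized precisely to avoid this. Rather than localize, it constructs in \S\ref{deformation_to_normal_cone} a \emph{global} perfect complex $\sO_{\widehat X}$ on $M^\circ_{\widebar X}Y$ by gluing the local Koszul deformations $K(\rho_1^\ast E\otimes\sO(1);s')$ via $\infty$-categorical descent; this is the main technical input and occupies most of the work. Once $\sO_{\widehat X}$ exists globally, the argument proceeds along the lines you sketched in your last paragraph: the bivariant class $dch^{M^\circ_{\widebar X}Y}_{\mP^1_{\widebar X}}(\sO_{\widehat X})$ is specialised via $i_1^!$ and $i_0^!$, $\mA^1$-invariance equates the two, and at the special fibre property~(3) of $\sO_{\widehat X}$ gives $\sO_{\widetilde X}\cong\sO_{C_{\widebar X}Y}\otimes_{\sO_{\mV(\iota_X^\ast\mT_{X/Y})}}\sO_{\widebar X}$, reducing everything to the honest regular embedding $\widebar X\hookrightarrow\mV(\iota_X^\ast\mT_{X/Y}[1])$ where Fulton's formula applies directly. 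In short: your instinct to deform is correct, but the deformation must be built globally \emph{before} any Chern-character computation, and that globalisation is where the real work lies.
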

\noindent The difference between $dch$ and the local Chern character $ch$ is small and can be ignored in the above statement. In fact, compatibility with proper pushforward ensures it is the unique extension of the local Chern character to the derived theory.  

We mention this proposition since its proof illuminates (and motivates) the definition of the virtual Gysin homomorphism:  there exists a deformation of $X$,  $\widehat X \to \dcone $, such that the restriction of $\widehat X$ to the special fiber is equivalent to $\sO_{\widebar X} \otimes^L_{Sym \iota^\ast _X\mL_{X/Y}} \sO_{C_{\widebar X} Y}$.  This shows (in K-theory) that tensoring with $\sO_X$ is equivalent, after applying $\mA^1$-invariance and the projection formula, to the virtual Gysin homomorphism for K-theory, i.e. deforming to the normal cone, pushing forward to the obstruction bundle, and intersecting with the zero section.  Thus, using the invariance of the local Chern character it forces the standard virtual Gysin homomorphism for Chow theory as well.

The restriction to quasi-projectivity is the built in global factorizations they provide.  This significantly reduces complication in defining $\tau$.  There exists a strong possibility of extending parts of this theory past the quasi-projectivity requirement.  At the very least, it should be possible to extend this to certain Deligne-Mumford stacks and more general proper varieties.

In the last section we give some comparisons of our result with that of \cite{kapranov_fontanine} and \cite{fantechi_gottsche}.  It should be noted that the initial motivations for this project were to extend the results in \cite{kapranov_fontanine} to more general classes of morphisms.  In cases that our paper overlaps with the their assumptions, e.g., working with a field of characteristic 0 and fixing a smooth embedding of a quasi-smooth derived scheme into a smooth scheme, we can derive \cite[Theorem 4.2.3]{kapranov_fontanine} as a direct consequence of Theorem \ref{grr} combined with the fact that $[\sO_X] = \sum (-1)^i[\sH^i(\sO_X)] \in K_0(X)$ in this case.   For the comparison with \cite{fantechi_gottsche}, we show that the class $\sum (-1)^i[\sH^i(\sO_X)] = [\sO_{X}^{vir}]$\footnote{as defined in \cite{fantechi_gottsche}} in $K(\widebar X)$.  Combining this with our comparison with \cite{kapranov_fontanine} allows us to derive their result from ours.  From this comparison it becomes immediate that when working with obstruction theories over a point rather than derived schemes, the class $[\sO_{X}^{vir}]$ is the ``ghost'' of the quasi-smooth derived scheme accessible in G-theory of the underlying scheme.  
  


\subsection{Notation}
\label{notation}
We let $\widebar X := t_0(X)$ and let $\iota_X: \widebar X \to X$ denote the natural adjunction morphism.  Further, we let $\widebar f := t_0(f)$.  Given a scheme, we let $A(X)$ and $\mr{CH}(X)$ denote the associated Chow group. 

We will be working with many composites of linear transformations.  To simplify notation we will denote composition as $AB$ or $A \cdot B$. 

\section{Derived Algebraic Geometry}
\label{derived_intro}
This section will be a (very) brief introduction to derived algebraic geometry with the main goal to set notation and basic concepts that will be used throughout the rest of the paper.  

The central goal of derived algebraic geometry is to extend the basic building blocks that make up modern algebraic geometry.  Primary extensions arise by embedding commutative rings into simplicial rings, connective $E_\infty$-ring spectra, or commutative dg-algebras.   Although straightforward in concept, and certainly one that the likes of Grothendieck, et al. could envision, one immediately encounters the necessity of dealing with the phenomena of ``homotopy coherence''.  Techniques for effectively tracking homotopy coherence while forming a reasonable theory have only recently become available (a path started by Quillen while attending Hartshorne's Duality lectures).

A key advantage (from an algebro-geometric standpoint) of working with these extended building blocks is that this is the natural context to work with the cotangent complex.  Many of the shortcomings of the cotangent complex from a modern algebro-geometric standpoint arise from the lack of fiber products to be homotopy fiber products in this extended context.  Further, derived schemes afford a greater control over the cotangent complex.  This paper will exploit this control to give Grothendieck-Riemann-Roch formulas for virtually smooth schemes. 

 As mentioned above, the various flavors of derived algebraic geometry vary in the types of affine schemes.  The results in this paper will be in the simplicial commutative world.  This is primarily for simplicity in presentation and accessibility of the paper.   We are confident that the results can be transported to all contexts.  Extra care must be taken in the $E_\infty$-context due to a more complex $Sym$ functor.  If found useful, this will be explicitly expanded on in future work.   We will refer to ``underived'' schemes as ``discrete'' since their functor of points takes values in discrete spaces, not to be confused with a scheme for which every point is open and closed.  We let $\widebar X := t_0(X)$ and let $\iota_X: \widebar X \to X$ denote the natural adjunction morphism.  This morphism is proper, an important fact derived Chow theory.

Given a derived scheme $X$, we let $\sO_X$-mod denote the stable $\infty$-category of $\sO_X$-modules.  In the case that $X$ is discrete, the homotopy category of $\sO_X$-mod is traditionally notated by $D(\sO_X\textrm{-mod})$.  We let $\QC(X)$ denote the full stable subcategory of quasi-coherent $\sO_X$-modules.

\subsection{Quasi-smooth schemes}

A quasi-smooth morphism is a morphism for which the associated relative cotangent complex is perfect and Tor-amplitude of $[-1, 0]$ (cohomologically indexed).  This class encompasses smooth morphisms and locally complete intersection morphisms.  It is stable under under (homotopy) fiber products and naturally encompasses perfect obstruction theories \cite{bf} (see \cite{timo_thesis} for more details).   Quasi-smooth embeddings arise in our theory in two ways: first, quasi-smooth embeddings will replace regular embeddings in our definition of operational Chow theory; second, quasi-smooth morphisms will be the natural class to associate the Grothendieck-Riemann-Roch formulas.  


\begin{definition}
  Given a locally free sheaf $\sE$ on a derived scheme $X$ and a section $s: \sO_X \to \sE$,  $\sO_X$ is a $Sym\; \sE^\vee$-algebra via the $0$-section and $s^\vee$ (the dual to the section $s$). Let $K(\sE; s)$ denote the explicit representative of $\sO_X \otimes_{Sym \sE^\vee} \sO_X$ obtained by taking the Bar resolution of $\sO_X$ (as the $0$-section).  This is the simplicial analog of the Koszul complex associated to ($\sE$, $s$) and has similar properties.
\end{definition}
The following local picture will be key to obtaining the main results in this paper.  It is an expansion of remarks in \cite{lurie_thesis}.
\begin{proposition}
\label{localnature}
  Let $f: X \to Y$ be a quasi-smooth embedding with $Y$ ``discrete''.  Then for any $p \in X$, there exists a Zariski neighborhood $U$ of $p$ in $Y$ such that $U \times_Y X \cong \spec K(\sE; s)$ for some locally free sheaf $\sE$ on $U$ and $s \in \sE(U)$.  In particular, $f_\ast \sO_X$ is a perfect $\sO_Y$-module. 
\end{proposition}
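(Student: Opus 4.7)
The plan is to present $X \to Y$ locally as the derived zero locus of an explicit section of a trivial bundle, reading the Koszul data off from the relative cotangent complex.

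First I would pin down the shape of $\mL_{X/Y}$. Since $Y$ is discrete and $f$ is an embedding, the underlying map $\widebar f: \widebar X \to Y$ is a closed immersion of ordinary schemes, so $\Omega^1_{\widebar X/Y}=0$ and in particular $\mathcal{H}^0(\mL_{X/Y})=0$; combined with the quasi-smoothness hypothesis (perfect, Tor-amplitude $[-1,0]$) this forces $\mL_{X/Y}$ to be quasi-isomorphic to a locally free sheaf $\mathcal{N}^\vee$ sitting in cohomological degree $-1$, of some rank $r$ near $p$.

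Next I would extract local generators. Let $I \subset \sO_Y$ be the ideal sheaf of $\widebar X$. The conormal identification $\mathcal{H}^{-1}(\mL_{X/Y})|_{\widebar X}\simeq I/I^2$ lets me shrink $Y$ to a Zariski neighborhood $U$ of $\widebar f(p)$ and pick $g_1,\ldots,g_r\in I(U)$ whose images form a free basis of $I/I^2$ over $\widebar X\cap U$. Classical Nakayama (after possibly shrinking $U$ again) upgrades this to: $g_1,\ldots,g_r$ generate $I|_U$. Setting $\sE=\sO_U^{\oplus r}$ and $s=(g_1,\ldots,g_r)\in\sE(U)$, the model $Z:=\spec K(\sE;s)$ is then a quasi-smooth derived $U$-scheme with $\widebar Z=V(g_1,\ldots,g_r)=\widebar X\cap U$ and $\mL_{Z/U}\simeq \sE^\vee[1]$.

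The crux is producing an equivalence $U\times_Y X\simeq Z$ over $U$. Each $g_i$ vanishes in $\pi_0\sO_{U\times_Y X}=\sO_{\widebar X\cap U}$, hence is null-homotopic in $\sO_{U\times_Y X}$; choosing null-homotopies for all $i$ assembles into a null-homotopy of the pullback of $s$, and the universal property of $K(\sE;s)$ as the derived zero locus of $s$ yields a morphism $\phi: U\times_Y X\to Z$ over $U$. I would then check that $\phi$ induces an isomorphism of classical truncations (both $\widebar X\cap U$) and an equivalence on relative cotangent complexes (both are locally free of rank $r$ in degree $-1$, with the induced map on $\mathcal{H}^{-1}$ being the tautological identification via the $g_i$); a standard Postnikov/infinitesimal-lifting argument then upgrades $\phi$ to an equivalence of derived schemes. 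The perfectness assertion is then immediate: $f_\ast\sO_X|_U\simeq K(\sE;s)$ is a bounded complex of finite free $\sO_U$-modules, and perfectness is local on $Y$.

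The main obstacle is the final equivalence: the map $\phi$ depends on non-canonical choices of null-homotopies, and promoting ``iso on classical truncation plus equivalence on cotangent complex'' to ``equivalence of derived schemes'' requires the deformation-theoretic input that $\mL_{(U\times_Y X)/Z}$ vanishes, after which compatibility of truncations forces $\phi$ to be an equivalence.
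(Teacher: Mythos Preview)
Your overall strategy matches the paper's: build a Koszul model locally and compare via the cotangent complex, invoking a connectivity/deformation-theoretic criterion to conclude. The gap is in the second step, where you claim the conormal identification $\mathcal{H}^{-1}(\mL_{X/Y})|_{\widebar X}\simeq I/I^2$. This fails in general: take $Y=\spec A$ discrete and $X=\spec K(\sO_Y^{\oplus r};0)$, the Koszul complex on the zero section. Then $\widebar X=Y$, so $I=0$ and $I/I^2=0$, yet $\mL_{X/Y}\simeq \sO_X^{\oplus r}[1]$ is nonzero. In general the transitivity triangle only gives a map $\mathcal{H}^{-1}(\iota_X^\ast\mL_{X/Y})\to I/I^2$, not an isomorphism. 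So ``pick $g_i\in I$ giving a free basis of $I/I^2$'' can fail outright, and even when it succeeds the elements $g_i$ alone do not pin down the cotangent data: the induced map $\phi^\ast\mL_{Z/U}\to\mL_{X/U}$ depends on the chosen null-homotopies, and for a random choice there is no reason it is an equivalence. Your final paragraph correctly identifies the obstacle but presupposes the cotangent equivalence you have not yet established.

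The paper closes this gap by working directly with $K:=\mathrm{fib}(A\to B)$, whose points are pairs consisting of an element of $A$ together with a null-homotopy of its image in $B$. There is a natural map $q:K\otimes_A B\to \mL_{B/A}[-1]$ which is at least $2$-connective, and since $\mL_{B/A}[-1]$ is projective (quasi-smoothness) it admits a section $\mu$. After shrinking so that $\mL_{B/A}[-1]\cong B^{\oplus n}$, one lifts $\mu$ further to $\widetilde\mu:\oplus^n A\to K$ using that $A$ is discrete and $\pi_0(K)\twoheadrightarrow\pi_0(K\otimes_A B)$. Composing with $K\to A$ produces the section $s$ \emph{together with} the correct null-homotopies, so that the resulting comparison map $\widetilde B=K((\oplus^n A)^\vee;s^\vee)\to B$ induces an equivalence on $\mL_{(-)/A}$ by construction; then Lurie's connectivity criterion finishes. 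In short, the missing idea is that the $g_i$ must be chosen as lifts from the free module $\mL_{B/A}[-1]$, not from $I/I^2$, and that the lift to $K$ is what simultaneously fixes the null-homotopies.
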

\begin{proof}
 Since all of these properties are local, we can assume $Y \cong \spec A$ for some discrete commutative algebra $A$,  and $B$ is a simplicial $A$-algebra with $A \to B$ inducing $A \twoheadrightarrow \pi_0(B)$.  By \cite[Proposition  3.2.16]{lurie_thesis}, if $K := \mr{fib}(f)$, then there is a natural map $q: K\otimes_A B \to \mL_{B/A}[-1]$ which is at least $2$-connective.  Since $f$ is quasi-smooth, $\mL_{B/A}[-1]$ is a projective $B$-module.  
We have a fiber/cofiber diagram
\begin{displaymath}
  \xymatrix{K^\prime \ar[r] \ar[d] & K\otimes_A B \ar[d] \\ 0 \ar[r] & \ar@{-->}@/_/[u]_{\mu} \mL_{B/A}}
\end{displaymath}
The connectedness of $q$ means $K^\prime \cong K^{\prime \prime}[1]$ for some other $B$-module; the projectivity of $\mL_{B/A}[-1]$ then ensures we a natural lift $\mu$ with $q \circ \mu$ homotopic to the identity.

If we refine our cover even more, we can assume that $\mL_{B/A}$ is free.  In other words, $\mL_{B/A} \cong (\oplus^n A) \otimes_A B[1]$.  Using standard adjointness, $\mu$ can be considered as $\mu \in \mr{Hom}_{A}(\oplus^n A,  K\otimes_A B)$.  However, the surjectivity of $\epsilon: \pi_0(K) \to \pi_0(K\otimes_AB)$ (arising from the natural unit of adjunction) combined with the fact that $A$ is discrete means there exists a lift of $\mu$: $\widetilde \mu \in \mr{Hom}(\oplus^n A, K)$ with $\widetilde \mu \circ \epsilon$ homotopic to $\mu$.  

The significance of $\widetilde \mu$ is when composed with the natural morphism $K \to A$, we obtain $s: \oplus^n A \to A$ and the algebra $\widetilde B := K((\oplus^n A)^\vee, s^\vee)$.  Since $\widetilde B \cong A \otimes^{\mL}_{A[x_1, \ldots, x_n]} A$, the morphism $A \to B$ induces $g: \widetilde B \to B$ by universality of cofiber products (this morphism sends $x_i \to 0$).  We claim $g$ is an equivalence. At the very least, it is $0$-connected.

Using the natural long exact sequence associated to the cotangent complex, one has
\begin{displaymath}
 \mL_{\widetilde B/A}\otimes B \to \mL_{B/A} \to \mL_{B/\widetilde B}
\end{displaymath}
By definition, the middle term is $\oplus^n B[1]$; by construction, the left term is $\oplus^n B[1]$.  Applying \cite[Proposition 3.2.16]{lurie_thesis} again, we know that $\mL_{B/ \widetilde B}$ is at least 2-connective, which implies that the middle morphism is surjective on $\pi_1$.  This is enough to show that $\mL_{B/\widetilde B}$ is zero (since the middle left morphism must be an equivalence then). The final equivalence between $B$ and $\widetilde B$ is provided by \cite[Corollary 3.2.17]{lurie_thesis}.

The last remark on the perfection of $f_\ast \sO_Y$ is clear from the local description.

\end{proof}

\section{Relative perfection}

In this section we will extend the theory of relatively perfect to an arbitrary morphism between derived schemes and list some of its important properties.  Many of these proofs are ``derived'' adaptations of the methods in \cite{SGA6}. We recall the following definition from \cite{Lowrey_moduli}.
\begin{definition}
  Given a morphism $f: X \to Y$, let $\bderive_{fl, f}(X)$ denote the full subcategory of $\sO_X$-mod consisting of pseudo-coherent objects of finite Tor-amplitude over $f$.
\end{definition}

\begin{definition}
\label{relperf}
  Let $f: X \to Y$ be a 
 morphism between $X, Y \in \dSch$.  Then $\sF \in \sO_X$-mod is strict $f$-relatively perfect if there exists a commutative diagram 
  \begin{displaymath}
    \xymatrix{ X \ar[rr]^i \ar[dr]^f & & Z \ar[ld]^s \\ & Y &}
  \end{displaymath}
with $s$ smooth and $i$ a closed immersion, such that $i_\ast \sF$ is in $\bderive_{fl, s}(Z)$.  $\sF$ is relatively perfect (or $f$-perfect) if there exists an \`etale cover ${U_\alpha}$ with $\sF|_{U_\alpha}$ strict relatively perfect.  The full subcategory of $\sO_X$-mod consisting of $f$-perfect objects will be denoted $\perf(f)$.
\end{definition}

Note that if $f$ is not locally of finite type over $Y$, then there will not exist a covering satisfying the factorization condition, and $\perf(f) \cong \emptyset$.

\begin{remark}
One can think of this as a stackification (in the etale topology) of the presheaf that assigns to $U \to X$, the subcategory of strict relatively perfect objects.
\end{remark}

\begin{remark}
This definition of relatively perfect will not always coincide with the notions introduced in \cite{Lowrey_moduli}.  However, if $X$ and $Y$ satisfy some strong finiteness conditions, they will align.  In particular, if $f$ is smooth and quasi-compact and $Y$ is quasi-compact,  $\perf(f) \cong \bderive_{fl, f}(X)$.   
\end{remark}

\begin{proposition}
\label{invariance}
If $\sF \in \perf(f)$, then for any local factorization of $f$, $X^\prime  \xrightarrow{i} Z^{\prime} \xrightarrow{s} Y$ as above, with $p: X^\prime \to X$ etale, 
 $i_\ast p^\ast \sF \in \perf(s)$.
\end{proposition}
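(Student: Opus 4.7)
The plan is to verify, \'etale-locally on $Z'$, that $i_*p^*\sF$ satisfies the defining property of $\perf(s)$; the argument hinges on rewriting the $\bderive_{fl,t}$ condition as an intrinsic property on the source of any closed-immersion factorization. Concretely, for a closed immersion $j: U \to W$ over $Y$ with smooth structure map $t: W \to Y$ and any $\sO_Y$-module $M$, the projection formula together with the exactness of $j_*$ yields
\[
(j_*\sG) \otimes^L t^* M \;\cong\; j_*\!\left(\sG \otimes^L (t\circ j)^* M\right),
\]
so $j_*\sG \in \bderive_{fl,t}(W)$ if and only if $\sG$ is pseudo-coherent on $U$ and of finite Tor-amplitude over $t\circ j$. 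This reformulation is factorization-independent, which is what makes the proposition tractable.

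Unpacking $\sF\in\perf(f)$ gives an \'etale cover $\{q_\alpha: U_\alpha\to X\}$ and factorizations $j_\alpha: U_\alpha \to W_\alpha$, $t_\alpha: W_\alpha \to Y$ exhibiting strict relative perfection; by the equivalence above, $\sF|_{U_\alpha}$ is pseudo-coherent on $U_\alpha$ and of finite Tor-amplitude over $f|_{U_\alpha}$. Pulling back along the \'etale (hence flat) morphism $p_\alpha: U'_\alpha := X'\times_X U_\alpha \to U_\alpha$ preserves both properties, so $p^*\sF|_{U'_\alpha}$ is pseudo-coherent on $U'_\alpha$ and of finite Tor-amplitude over $(f\circ p)|_{U'_\alpha} = s\circ (i|_{U'_\alpha})$.

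Next, I upgrade $\{U'_\alpha\to X'\}$ to an \'etale cover of $Z'$ using the invariance of the small \'etale site across the closed immersion $i$: each $U'_\alpha$ lifts, Zariski-locally on $Z'$, to an \'etale $V_\alpha \to Z'$ with $X'\times_{Z'}V_\alpha \cong U'_\alpha$. Adjoining the open complement of $i(X')$ in $Z'$ (on which $i_*p^*\sF$ vanishes trivially) then produces an \'etale cover of $Z'$. Writing $i_\alpha: U'_\alpha \to V_\alpha$ for the base change of $i$, one has $(i_*p^*\sF)|_{V_\alpha} \cong (i_\alpha)_*(p^*\sF|_{U'_\alpha})$. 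Applying the displayed equivalence now with $i_\alpha$ in the role of $j$ and $s|_{V_\alpha}: V_\alpha \to Y$ (smooth as a composite of \'etale and smooth) in the role of $t$ shows $(i_*p^*\sF)|_{V_\alpha} \in \bderive_{fl,\,s|_{V_\alpha}}(V_\alpha)$; the trivial factorization $V_\alpha \xrightarrow{\mr{id}} V_\alpha \xrightarrow{s|_{V_\alpha}} Y$ therefore witnesses strict relative perfection, proving $i_*p^*\sF \in \perf(s)$.

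The main delicacy I anticipate is the \'etale lifting step across the closed immersion: this is classical for ordinary schemes (invariance of the small \'etale site under nilpotent thickenings / closed immersions) and its derived analogue follows from the fact that the small \'etale site of a derived scheme coincides with that of its underlying classical truncation.
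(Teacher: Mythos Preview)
Your Tor-amplitude reduction via the projection formula is fine and matches the paper's one-line dispatch of that half. The gap is in the pseudo-coherence half of your claimed equivalence
\[
j_*\sG \in \bderive_{fl,t}(W) \quad \Longleftrightarrow \quad \sG \text{ pseudo-coherent on } U \text{ and finite Tor-amplitude over } t\circ j.
\]
For an arbitrary closed immersion $j:U\to W$ of derived schemes, it is \emph{not} automatic that $j_*$ preserves pseudo-coherence; the paper's Lemma~\ref{nicepush} proves this only under the hypothesis that $j_*\sO_U$ is perfect, and its proof of the forward direction genuinely uses that hypothesis (one needs $j_*$ to carry perfect approximants to perfect approximants). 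You invoke the forward direction at the last step, pushing $p^*\sF|_{U'_\alpha}$ along $i_\alpha:U'_\alpha\to V_\alpha$, but nothing you have said guarantees $(i_\alpha)_*\sO_{U'_\alpha}$ is perfect (or even pseudo-coherent), so the step does not go through as written.

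This is exactly the obstacle the paper's proof is organized around. Rather than attempting an intrinsic characterization of pseudo-coherence on the source, the paper compares two factorizations directly: given your $X'\to Z'\to Y$ and a witnessing factorization $X''\to Z''\to Y$, it forms the product $Z'\times_Y Z''$ and the closed immersion $X'\times_X X''\hookrightarrow Z'\times_Y Z''$, then appeals to Lemma~\ref{closedequiv} (together with Lemma~\ref{ifandonlyif}) to transfer pseudo-coherence between the two embeddings. The point of Lemma~\ref{closedequiv} is a diagonal/section trick that reduces the comparison to a \emph{quasi-smooth} closed immersion, where $j_*\sO$ \emph{is} perfect and Lemma~\ref{nicepush} applies. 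Your \'etale-lifting step is fine (the paper uses the same Zariski-local extension of \'etale neighborhoods across a closed immersion), but it does not substitute for this reduction.
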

\begin{proof}
  We show that for any local factorization, $i_\ast \sF$ is pseudo-coherent and locally finite relative Tor-amplitude over $Y$.  Locally finite relative Tor-amplitude is tested on $X$ and is independent of the factorization (due to the projection formula).  We are thus reduced to showing pseudo-coherence.  Let $i^{\prime \prime}: X^{\prime \prime} \to Z^{\prime \prime}$ and $s^{\prime \prime}: Z^{\prime \prime} \to Y$ be a local factorization with $X^{\prime \prime}$ an \'etale cover of $X$ such that $i^{\prime \prime}_\ast \sF$ satisfies the desired properties (we know at least one exists since $\sF \in \perf(f)$ by assumption).  This yields the diagram

\begin{displaymath}
  \xymatrix@=1em{&  X^\prime \times_X X^{\prime \prime}\ar[rrrr]^{k := (i^\prime \times i^\second)} \ar[rdd]^(.4){p^{\prime \prime}} \ar[ld]_(.6){p^{\prime}} && & & Z^\prime \times_Y Z^{\prime \prime} \ar[ddr] \ar[ld] &  \\
X^\prime \ar[ddr]^p \ar[rrrr]_(.3){i^\prime}|(.5)\hole & &&&Z^\prime \ar[rdd]|(.5)\hole&&\\
& &  X^{\prime \prime}\ar[ld] \ar[rrrr]_(.4){i^{\prime \prime}} & &  & & Z^{\prime \prime}\ar[ld] \\
& X  \ar[rrrr] &  &&& Y
}
\end{displaymath}
The left vertical morphisms are \'etale and the right vertical morphism are smooth.  All but the bottom horizontal morphism are closed immersions.  The proposition will be shown if we show $(i^{\prime} \times i^{\prime \prime})_\ast p^{\prime \ast} p^{\ast}\sF$ satisfies the desired properties if and only if $i^\prime_\ast p^\ast \sF$ does.  This will proceed in two steps.  We first assume that $p^{\prime}$ is a Zariski open immersion (i.e., $X^{\prime \prime}$ is a Zariski cover of $X$), and then generalize to etale covers.

Since the little Zariski site over a derived scheme is equivalent to the little Zariski over its discrete truncation, and in this case the Zariski subspace topology (as a closed subvariety of $Z^\prime$) and the Zariski site agree, any Zariski open neighborhood of $X^\prime$ is the restriction of a Zariski open neighborhood of $Z^\prime$.  We can thus form the Cartesian diagram 
\begin{displaymath}
  \xymatrix{X^\prime \times_X X^{\prime \prime}  \ar[r]^{\quad j} \ar[d]^{p^\prime} & W \ar[d]^q \\ X^\prime \ar[r]^{i^\prime} & Z^\prime}
\end{displaymath}
with $q$ a Zariski open immersion and $j$ a closed immersion.  By Lemma \ref{ifandonlyif}, $i^\prime_\ast \sF$ is pseudo-coherent if and only if $j_\ast p^{\prime \ast} \sF$ is as well; from Lemma \ref{closedequiv} $j_\ast p^{\prime \ast} \sF$ is pseudo-coherent if and only if $k_\ast p^{\prime \ast} \sF$ is.  Putting this together gives the result for Zariski.

For $p^\prime$ etale, one needs an extra step since one can only Zariski locally extend etale morphisms on closed embeddings.  In particular, it is well known that there exists a Zariski cover $\coprod V_\alpha  \to X^\prime \times_X X^{\prime \prime}$ such that $V_\alpha \to X^\prime$ is the pullback of an etale morphism $U_\alpha  \to Z^\prime$.  One then has a diagram
\begin{displaymath}
  \xymatrix{\coprod V_\alpha \ar[r] \ar[d] & W \ar[d] \\ X^\prime \times_X X^{\prime \prime} \ar[r] & Z^\prime \times_Y Z^{\prime \prime} }
\end{displaymath}
with the vertical morphisms Zariski open.  Using the same arguments as above shows that I can assume that $X^\prime \times_X X^{\prime \prime}$ is the restriction of an etale $W^\prime \to Z^\prime$.  The above arguments proceed the same to give the result.
\end{proof}

\begin{corollary}
  If $f = s \circ i$ is a factorization with $s$ smooth and $i$ and
  embedding, then $\sF \in \perf(f)$ if an only if it is strict
  $f$-relatively perfect.
\end{corollary}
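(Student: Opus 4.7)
The plan is to observe that one direction is tautological: if $\sF$ is strict $f$-relatively perfect (with respect to some factorization, which we may as well take to be the given one), then $\sF$ is $f$-perfect by taking the trivial étale cover $\mathrm{id}_X \colon X \to X$.

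For the nontrivial direction, assume $\sF \in \perf(f)$ and that $f = s \circ i$ is the given factorization with $s$ smooth and $i$ a closed immersion. The key observation is that this factorization is itself a ``local factorization'' of $f$ in the sense of Proposition \ref{invariance}, using the trivially étale morphism $p = \mathrm{id}_X \colon X \to X$. Applying Proposition \ref{invariance} directly to this data yields $i_\ast \sF = i_\ast p^\ast \sF \in \perf(s)$.

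It then remains to unpack the identification $\perf(s) \cong \bderive_{fl,s}(Z)$ for the smooth morphism $s$, which is precisely the content of the remark preceding Proposition \ref{invariance} (using that $s$ is smooth and quasi-compact, with $Y$ quasi-compact, so that the two notions coincide). Alternatively, one can see this more directly: the trivial factorization $Z \xrightarrow{\mathrm{id}} Z \xrightarrow{s} Y$ exhibits any $\sG \in \bderive_{fl,s}(Z)$ as strict $s$-relatively perfect, and conversely Proposition \ref{invariance} applied to the smooth $s$ forces strict $s$-relatively perfect objects to lie in $\bderive_{fl,s}(Z)$ (pseudo-coherence and relative finite Tor-amplitude both being invariant under the choice of factorization). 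Hence $i_\ast \sF \in \bderive_{fl,s}(Z)$, which is precisely the statement that $\sF$ is strict $f$-relatively perfect with respect to the chosen factorization.

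There is no real obstacle here — the corollary is essentially a specialization of Proposition \ref{invariance} to the case where the étale cover is the identity and the ``local'' factorization is actually global. The only minor point to be careful about is that we must know $\bderive_{fl,s}$ satisfies étale descent, so that the potentially weaker notion $\perf(s)$ (defined via étale-local strict perfection) coincides with the global condition $\bderive_{fl,s}(Z)$; this is standard and is already implicitly used in the definition and in the proof of Proposition \ref{invariance}.
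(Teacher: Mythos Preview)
Your proof is correct and matches the paper's intended argument (the paper gives no explicit proof, the corollary being an immediate consequence of Proposition~\ref{invariance} with $p=\mathrm{id}_X$). One small simplification: the \emph{proof} of Proposition~\ref{invariance} already establishes the stronger conclusion that $i_\ast p^\ast\sF$ is pseudo-coherent with locally finite relative Tor-amplitude over $Y$, i.e.\ $i_\ast p^\ast\sF\in\bderive_{fl,s}(Z')$ rather than merely $\in\perf(s)$, so taking $p=\mathrm{id}$ yields $i_\ast\sF\in\bderive_{fl,s}(Z)$ directly and your detour through the identification $\perf(s)=\bderive_{fl,s}(Z)$ is not needed.
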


\begin{lemma}
\label{smoothperfect}
Given a faithfully flat smooth morphism $h: Z \to Y$, $\sH \in \QC(Y)$ is perfect if and only if $h^\ast \sH$ is.
\end{lemma}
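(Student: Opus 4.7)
The plan is to prove the two directions separately, with the reverse implication being the substantive one. The forward direction is immediate: the pullback of a bounded complex of finitely generated projectives is again such a complex, so $h^\ast$ preserves perfection.

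For the converse, the statement is local on $Y$, so I would reduce to the affine setting $Y = \spec A$, $Z = \spec B$, with $A \to B$ faithfully flat and smooth, and $\sH$ corresponding to an $A$-module $M$ with $M \otimes_A B$ perfect over $B$. I would then use the standard characterization that perfect equals pseudo-coherent plus finite Tor-amplitude (over the base), and show both properties descend along $A \to B$.

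For Tor-amplitude, the flatness of $B$ over $A$ gives the natural equivalence $(M \otimes_A^L N) \otimes_A B \simeq (M \otimes_A B) \otimes_B^L (N \otimes_A B)$ for any discrete $A$-module $N$, and flatness means $h^\ast$ is $t$-exact so it commutes with taking homotopy groups. Combined with faithful flatness --- vanishing of a homotopy group being equivalent to vanishing of its base change --- a finite Tor-amplitude range for $h^\ast M$ transfers directly to the same range for $M$.

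For pseudo-coherence, I would invoke the characterization from \cite{lurie_thesis} that $M$ is pseudo-coherent iff $M$ is bounded above and each $\pi_i M$ is a finitely presented $\pi_0 A$-module. Boundedness above descends by the same flatness/faithful-flatness reasoning as in the Tor-amplitude step, and finite presentation of each $\pi_i M$ over $\pi_0 A$ descends along the faithfully flat $\pi_0 A \to \pi_0 B$ by the classical fpqc descent result for finitely presented modules. The main obstacle is isolated to this pseudo-coherence step: one must verify the right characterization of pseudo-coherent modules (which is clean under the finite-presentation hypotheses available here, thanks to smoothness) and then appeal to faithfully flat descent of finite presentation; once those tools are in place, the argument is essentially formal.
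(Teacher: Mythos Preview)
Your proof is correct in the paper's working context and follows the same high-level decomposition as the paper: the forward direction is immediate, and for the converse one splits ``perfect'' into ``pseudo-coherent'' plus ``finite Tor-amplitude'' and descends each along the faithfully flat $h$. Your Tor-amplitude argument matches the paper's.

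The difference lies in how pseudo-coherence is descended. The paper invokes fppf descent of pseudo-coherence as a black box (its citation of Lemma~\ref{nicepush} looks misdirected, since that lemma treats embeddings; the intended input is the general fppf descent statement, as in the proof of Lemma~\ref{ifandonlyif} via \cite[Lemma~4.2]{Lowrey_moduli}). You instead unpack pseudo-coherence as ``bounded above with each $\pi_i M$ finitely presented over $\pi_0 A$'' and descend those conditions separately. That characterization requires a coherence or Noetherian hypothesis on $\pi_0 A$; note that the smoothness of $h$ does not supply this, since smoothness constrains the morphism but says nothing about $Y$ itself. In the paper's eventual setting ($\dsch_{/B}$ with $B$ smooth over a field and everything locally of finite type) the hypothesis holds and your argument goes through, but the lemma as stated sits in the more general Section~3, so the paper's direct appeal to fppf descent of pseudo-coherence is the more robust route.
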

\begin{proof}
  One direction is clear.  From Lemma \ref{nicepush}, we know that $h^\ast \sH$ is pseudo-coherent if and only if $\sH$ is.  One then just needs to check Tor-amplitude.  This is clear from the faithfullness of $h$ and the fact that tensoring commutes with pullback.   
\end{proof}

\begin{remark}
  Up until now, we have only used the $fppf$-part of smoothness.  However, the next result uses smoothness extensively.  This lemma will be very important for understanding the map between bivariant theories.
\end{remark}

\begin{lemma}
\label{globallyperfect}
Let $\sF \in \QC(X)$ and $f = s \circ i$ any factorization, then $\sF \in \perf(f)$ if and only if $i_\ast \sF \in \perf(Z)$.
\end{lemma}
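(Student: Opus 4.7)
The plan is to reduce the equivalence to a comparison of Tor-amplitude conditions, since pseudo-coherence of $i_\ast \sF$ on $Z$ is built into both sides: Proposition \ref{invariance} (applied with the identity cover of $X$) supplies it from $\sF \in \perf(f)$, and it is part of the definition of $i_\ast \sF \in \perf(Z)$. Thus it suffices to show that finite Tor-amplitude of $i_\ast \sF$ relative to $s$ is equivalent to finite Tor-amplitude of $i_\ast \sF$ over $\sO_Z$.

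For the easy direction, assume $i_\ast \sF \in \perf(Z)$. Since $s$ is flat, $s^\ast \sG$ is discrete on $Z$ for any discrete $\sG \in \QC(Y)$, and the projection formula gives
\[
i_\ast \sF \otimes^\mL_{\sO_Y} \sG \cong i_\ast \sF \otimes^\mL_{\sO_Z} s^\ast \sG,
\]
which is bounded in cohomology by the $\sO_Z$-Tor-amplitude of $i_\ast \sF$. Hence $i_\ast \sF \in \bderive_{fl, s}(Z)$, so $\sF$ is strict $f$-relatively perfect.

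For the converse, the smoothness of $s$ enters through the diagonal. Étale locally on $Z$ one has $Z \cong \mathbb{A}^n_Y$, with $n$ the relative dimension of $s$, so the diagonal $\Delta: Z \hookrightarrow Z \times_Y Z$ is cut out by the regular sequence $(x_i - y_i)$, and $\sO_Z$ admits a length-$n$ Koszul resolution over $\sO_Z \otimes^\mL_{\sO_Y} \sO_Z = \sO_Z \otimes_{\sO_Y} \sO_Z$ (the identification using flatness of $s$). In particular $\sO_Z$ has Tor-amplitude $[-n, 0]$ over $\sO_Z \otimes_{\sO_Y} \sO_Z$. For any discrete $\sG \in \QC(Z)$ I would then use the base-change identity
\[
i_\ast \sF \otimes^\mL_{\sO_Z} \sG \;\cong\; (i_\ast \sF \otimes^\mL_{\sO_Y} \sG) \otimes^\mL_{\sO_Z \otimes_{\sO_Y} \sO_Z} \sO_Z,
\]
where $\sO_Z \otimes_{\sO_Y} \sO_Z$ acts on the middle term via the two $\sO_Z$-actions on the factors. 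Since $\sG$ remains discrete as an $s^{-1}\sO_Y$-module, the $s$-Tor-amplitude hypothesis on $i_\ast \sF$ bounds the first factor in some range $[a, b]$, and tensoring with $\sO_Z$ shifts the total into $[a - n, b]$, giving the required bound.

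The main obstacle is this converse direction: promoting finite $s$-Tor-amplitude to finite $\sO_Z$-Tor-amplitude genuinely uses smoothness (not just flatness), and is encoded in the boundedness of the Tor-amplitude of $\sO_Z$ over $\sO_Z \otimes^\mL_{\sO_Y} \sO_Z$, i.e. in the regularity of the diagonal. Everything else is a matter of the projection/base-change formula.
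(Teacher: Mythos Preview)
Your argument is correct and follows essentially the same route as the paper: both reduce the claim to showing that finite $s$-relative Tor-amplitude implies finite absolute Tor-amplitude on $Z$, and both extract this from the quasi-smoothness (Koszul regularity) of the diagonal $\Delta: Z \to Z \times_Y Z$. The paper phrases this geometrically---pulling $\sF$ back along $\pi_2$, then computing $\Delta_\ast\Delta^\ast\pi_2^\ast\sF \cong \pi_2^\ast\sF \otimes \Delta_\ast\sO_Z$ via the projection formula and the local Koszul resolution of $\Delta_\ast\sO_Z$---whereas you use the equivalent algebraic identity $M \otimes^\mL_B N \cong (M \otimes^\mL_A N)\otimes^\mL_{B\otimes_A B} B$; these are the same computation in different clothing.
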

\begin{proof}
By Proposition \ref{invariance}, $\sF \in \perf(f)$ if and only if $i_\ast \sF \in \perf(s)$, thus we can assume that $i$ is the identity and $f = s$.  The locality of the result implies we must show that for $\sF$ strict pseudo-coherent,  $\sF$ is finite Tor-amplitude if and only if it is relatively finite Tor-amplitude over the smooth morphism $f$.  One implication is clear, for the other, the flatness of $f$ and the fiber diagram 
\begin{displaymath}
  \xymatrix{X \times_Y X \ar[r]^{\pi_1} \ar[d]^{\pi_2} & X \ar[d]^f \\ X \ar[r]^f & Y}
\end{displaymath}
ensures $\pi_2^\ast \sF$ is relatively perfect over $\pi_1$.  The smoothness of $f$  implies the natural diagonal $\Delta: X \to X \times_Y X$ is a quasi-smooth embedding.   We claim $\sG := \Delta_\ast \Delta^\ast \pi_2^\ast\sF \in \perf(\pi_1 \circ \Delta)$.  Since $\pi_1$ is smooth, for $\sG \in  \perf(\pi_1 \circ \Delta)$, it is enough for $\Delta_\ast \sG$ to be pseudo-coherent and locally finite Tor-amplitude over $\pi_1$.   However, $\Delta_\ast \sG = \Delta_\ast \Delta^\ast \pi_2^\ast \sG$ and locally is explicitly given as tensoring $\pi_2^\ast \sF$ with the Koszul resolution of $\Delta_\ast(\sO_X)$.  This operation preserves pseudo-coherence and finite Tor-amplitude, thus $\sG \in \perf(\pi_1 \circ \Delta) \cong \perf(id)$ and the result is shown.  
\end{proof}

\section{Bivariant algebraic K-theory and bivariant Chow}

In this section we define the two bivariant theories used in the Grothendieck-Riemann-Roch formula.  Throughout the rest of the paper (not including appendices) $\dsch_{/B}$ will consist of quasi-projective (over $B$) derived schemes with $\widebar X$ locally of finite type over a smooth base $B$.
\begin{remark}
  The strong conditions on the base are necessary for push-forward on
  the K-theory side, a well behaved Chow theory, and the comparison of $\tau$ (defined below) with
  the product. There might be alternate approaches for more general bases.  At a minimum it would require a more complex Chow functor.
\end{remark}

The following lemmas are immediate consequences of these assumptions.
\begin{lemma}
\label{factorizeit}
  Let $f \in \mr{Hom}_{\dsch_{/B}}(X, Y)$.  Then there exists a factorization of $f = s\circ i$ where $s: S \to Y$ is smooth and $i: X \to S$ is a closed embedding. 
\end{lemma}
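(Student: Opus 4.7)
The plan mirrors the classical factorization of a morphism between quasi-projective schemes. First, use quasi-projectivity of $\widebar X$ over $B$ to fix a locally closed immersion $\widebar j: \widebar X \hookrightarrow \mP^n_B$, and choose an open subscheme $V \subseteq \mP^n_B$ in which $\widebar j$ exhibits $\widebar X$ as a closed subscheme. Since $V$ is discrete, the adjunction between truncation and the inclusion of discrete schemes into $\dsch_{/B}$ lifts the morphism $\widebar j: \widebar X \to V$ to a morphism $g: X \to V$.

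Next, form the derived fiber product $S := Y \times_B V$ in $\dsch_{/B}$. Because $V$ is smooth (hence flat) over $B$, the projection $s: S \to Y$ is smooth and the truncation satisfies $\widebar S \simeq \widebar Y \times_B V$. The pair $(f, g)$ induces a morphism $i: X \to S$ by the universal property of the fiber product, and on truncations $\widebar i = (\widebar f, \widebar j): \widebar X \to \widebar Y \times_B V$.

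A direct local calculation shows $\widebar i$ is a closed immersion: working affine-locally with $\widebar Y = \spec(A)$ and $V = \spec(C)$ (so $\widebar X = \spec(R)$ with $C \twoheadrightarrow R$ arising from closedness of $\widebar X$ in $V$), the induced ring map $A \otimes_B C \to R$ sends $1 \otimes c$ to the image of $c$ under $C \twoheadrightarrow R$, hence is surjective. Therefore $\widebar i$ is a closed immersion and, in particular, affine.

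The main obstacle is promoting the classical closed immersion $\widebar i$ to a closed immersion $i$ of derived schemes. For this I would invoke the standard fact, from the foundations of derived algebraic geometry, that a morphism of derived schemes is a closed immersion iff it is affine and induces a surjection on $\pi_0$ of structure sheaves, together with the fact that affineness of a derived scheme is detected on its truncation. Both conditions hold for $i$ by the previous paragraph, which together with smoothness of $s$ yields the desired factorization $f = s \circ i$.
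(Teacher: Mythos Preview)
Your argument is correct and follows essentially the same route as the paper: embed $\widebar X$ as a closed subscheme of an open $V\subseteq\mP^n_B$, lift to $g:X\to V$ via the adjunction (since $V$ is discrete), set $S=Y\times_B V$, and take $i=(f,g)$. The paper simply asserts that $i=f\times j$ is a closed embedding, whereas you spell out the verification on truncations and invoke the standard criterion (affine plus $\pi_0$-surjective) to promote it; this is a welcome elaboration rather than a different method. One minor remark: the identification $\widebar S\simeq \widebar Y\times_B V$ does not actually require flatness of $V\to B$---it follows from $\pi_0$ of a derived tensor product being the ordinary tensor product---so your parenthetical justification is stronger than needed, though the conclusion is unaffected.
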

\begin{proof}

The existence of the factorization follows from $t_0(X)$ being quasi-projective: we can find a closed embedding $j$ into a Zariski open subset of projective space over $B$.  Let $M$ be this open subset.  Then for any $Y \to B$, $Y \times_{B} M \to Y$ is a smooth morphism, and $i = f \times j:  X \to Y \times M$ is a closed embedding.
\end{proof}

For the next lemma, the reader is referred to \cite{BZ} for the definition of a perfect morphism.
\begin{lemma}
\label{everythingsperfect}
  All morphisms in $\dsch_{/B}$ are perfect, and therefore satisfy base change and the projection formula.
\end{lemma}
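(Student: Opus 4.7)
The plan is to reduce the statement to the general formalism of perfect stacks and perfect morphisms developed in \cite{BZ}, where both base change and the projection formula are formal consequences of a morphism being perfect between perfect stacks. Concretely, I will show that every object of $\dsch_{/B}$ is a perfect stack and that every morphism satisfies the base-change characterization of a perfect morphism given in \emph{loc.~cit.}

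The first key step is to observe that any $X \in \dsch_{/B}$ is quasi-compact, quasi-separated, and has affine diagonal. Quasi-compactness and quasi-separatedness are immediate from quasi-projectivity over $B$, while the affine diagonal follows from the fact that the diagonal of $X$ over $B$ is obtained by base change from the (closed, hence affine) diagonal of $\mP^n_B$ along the locally closed immersion $X \hookrightarrow \mP^n_B$ provided by quasi-projectivity. The standard characterization of perfect stacks in \cite{BZ} then implies $X$ is a perfect stack. The second step is that for $f: X \to Y$ in $\dsch_{/B}$ to be perfect in the sense of \cite{BZ}, it suffices that $X \times_Y \spec A$ be a perfect stack for every affine $\spec A \to Y$; since quasi-compactness, quasi-separatedness, and having affine diagonal are each stable under base change, the previous paragraph applies again to the fiber product. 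With $f$ perfect, base change and the projection formula are formal.

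The main technical point is the affine-diagonal verification, but this is essentially built into quasi-projectivity over the smooth base and reduces to a short well-known check. An alternative route is to use Lemma \ref{factorizeit} to factor $f = s \circ i$ and verify perfection separately for the smooth morphism $s$ (flat and of finite presentation, hence immediate) and the closed embedding $i$ (where one would analyze $i_\ast \sO_X$ directly). That route does not really simplify matters for an arbitrary derived scheme $X$ without a finer description of $\sO_X$, so appealing directly to the perfect-stack characterization is cleaner and avoids a delicate finiteness argument for $i$.
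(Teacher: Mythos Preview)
Your overall strategy coincides with the paper's: both argue that every $X\in\dsch_{/B}$ is a perfect (derived) stack and then invoke \cite[Section~3.2]{BZ} for base change and the projection formula. The paper's one-line justification appeals to $X$ being quasi-compact, quasi-separated, and possessing an ample sequence (inherited from the ample line bundle on the quasi-projective $\widebar X$), rather than to the affine-diagonal criterion you use; either criterion feeds into the same \cite{BZ} machinery.

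There is, however, a technical slip in your affine-diagonal argument. You claim a locally closed immersion $X\hookrightarrow\mP^n_B$, but in this paper only the truncation $\widebar X$ is assumed quasi-projective; since $\mP^n_B$ is discrete, any morphism $X\to\mP^n_B$ would have to factor through $\widebar X$, and there is no morphism $X\to\widebar X$ (the canonical map $\iota_X$ goes the other way). Even granting such an immersion, a derived closed immersion is generally not a monomorphism, so the square with the diagonal of $\mP^n_B$ need not be Cartesian. The conclusion you want is nonetheless correct and easy to salvage: since $t_0$ preserves fiber products, $t_0(\Delta_X)$ is the diagonal of the separated scheme $\widebar X$, hence a closed immersion; it follows that $\Delta_X$ itself is a closed immersion of derived schemes and in particular affine. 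With that repair your argument is complete and equivalent to the paper's; the ample-sequence route simply sidesteps the issue.
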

\begin{proof}
  This is an easy consequence of the fact that a derived scheme $X$ with $\widebar X$ quasi-projective is a perfect derived scheme since it is locally quasi-compact, quasi-separated, and has an ample sequence.  Base change and projection formula follows then from \cite[Section 3.2]{BZ}. 
\end{proof}
These two lemmas will simplify the theory considerably.  

\subsection{Bivariant algebraic K-theory}

For bivariant algebraic K-theory, the confined morphisms are proper morphisms and the ``independent squares'' are homotopy fiber diagrams.  Given $[f] \in \pi_0(\map_{\dsch_{/B}}(X, Y))$, we set $K([f]) = K_0(\perf(f))$ where $f \in \map_{\dsch_{/B}}(X, Y)$ is any representative of the class $[f]$.  We show below that $K([f])$ is indeed independent of the choice of representative.  We will now define the various bivariant operations. 

Given a composable pair of morphisms $[f], [g]$,
\begin{align*}
  K([f]) \otimes K([g]) &\to K([f \circ g])\\
  [\sF] \otimes [\sG] & \to [g^\ast \sG \otimes_{\sO_X} \sF]
\end{align*}
with $\sG \in \perf(g)$ and $\sF \in \perf(f)$ (obtained by fixing a representative for each class of morphisms).  The proposition below ensures $g^\ast \sG \otimes_{\sO_X} \sF \in \perf(g \circ f)$.  Since this formula uses two exact transformations of stable $\infty$-categories it will descend to the Grothendieck groups and be independent of the choice of representatives.  

\begin{lemma}
\label{products_work}
If $f: X \to Y$,  $g: Y \to Z$, $\sF \in \perf(f)$ and $\sG \in \perf(g)$, then $f^\ast \sG \otimes_{\sO_X} \sF \in \perf(g \circ f)$   
\end{lemma}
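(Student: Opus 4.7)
The plan is to produce a single factorization of $g\circ f$ through a smooth morphism that is compatible with the given factorizations of $f$ and $g$, and then verify pseudo-coherence and relative Tor-amplitude on this factorization using the projection formula.

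First I would apply Lemma \ref{factorizeit} in the explicit form from its proof to factor $f = s_f \circ i_f$ with $V = Y\times_B M_f$ and $g = s_g\circ i_g$ with $W = Z\times_B M_g$, for Zariski opens $M_f, M_g$ in projective spaces over $B$. Setting $\widetilde V := W\times_B M_f$ produces a Cartesian square
\[
\xymatrix{ V \ar[r]^{j} \ar[d]_{s_f} & \widetilde V \ar[d]^{\sigma_2} \\ Y \ar[r]_{i_g} & W }
\]
in which $j := i_g\times_B \operatorname{id}_{M_f}$ is a closed embedding and $\sigma_2$, the base change of $s_f$, is smooth. Then $g\circ f$ factors as $\iota := j\circ i_f$ (closed) followed by $\sigma := s_g\circ \sigma_2$ (smooth), which is the factorization I will use to test membership of $f^\ast\sG \otimes \sF$ in $\perf(g\circ f)$.

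By Proposition \ref{invariance} I may pass to an \'etale cover on which $\sF$ and $\sG$ are strictly relatively perfect for the chosen factorizations, so that $(i_f)_\ast \sF \in \bderive_{fl, s_f}(V)$ and $(i_g)_\ast \sG \in \bderive_{fl, s_g}(W)$. The projection formula, available in $\dsch_{/B}$ by Lemma \ref{everythingsperfect}, then gives
\[
\iota_\ast(f^\ast \sG \otimes_{\sO_X} \sF) \;\simeq\; j_\ast\bigl(s_f^\ast \sG \otimes_{\sO_V} (i_f)_\ast \sF\bigr),
\]
and pseudo-coherence of this object is routine: $s_f^\ast \sG$ is the flat pullback of a pseudo-coherent sheaf, $(i_f)_\ast \sF$ is pseudo-coherent of finite Tor-amplitude over $Y$, the tensor is pseudo-coherent, and $j_\ast$ preserves pseudo-coherence along closed embeddings.

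The hard part will be checking finite Tor-amplitude over $\sigma$. For any discrete $\sO_Z$-module $M$, iterated projection formula and base change yield
\[
\iota_\ast(f^\ast\sG \otimes \sF) \otimes^{\mathrm{L}}_{\sO_{\widetilde V}} \sigma^\ast M \;\simeq\; \iota_\ast\bigl(\sF \otimes^{\mathrm{L}}_{\sO_X} f^\ast\bigl(\sG \otimes^{\mathrm{L}}_{\sO_Y} g^\ast M\bigr)\bigr).
\]
Since $\sG$ has finite Tor-amplitude over $g$, the inner object $\sG \otimes^{\mathrm{L}} g^\ast M$ has cohomology concentrated in a fixed interval independent of $M$. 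Because $f^\ast$ need not be flat I cannot simply commute it past boundedness; instead I would decompose $\sG \otimes^{\mathrm{L}} g^\ast M$ via its $t$-structure cohomology sheaves (each discrete on $Y$) and apply the Tor-amplitude hypothesis for $\sF$ over $f$ to each piece, so that the iterated-extension structure produces a uniform amplitude bound that is preserved by the closed-embedding pushforward $\iota_\ast$. This delivers $f^\ast\sG \otimes_{\sO_X}\sF \in \perf(g\circ f)$.
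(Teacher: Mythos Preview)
Your approach is correct but takes a genuinely different route from the paper's. The paper localizes and, via the tower $X \hookrightarrow \mA^a_Y \hookrightarrow \mA^{a+b}_Z$ over $Y \hookrightarrow \mA^b_Z$, first reduces to the case where both $f$ and $g$ are closed immersions; then Lemma~\ref{globallyperfect} makes $f_\ast\sF$ \emph{absolutely} perfect on $Y$, and one finishes by further localizing until $f_\ast\sF$ is free, where $(g\circ f)_\ast(\sF\otimes f^\ast\sG)\cong g_\ast\sG^{\oplus n}$ is visibly perfect. Your argument instead keeps a single global compatible factorization and verifies pseudo-coherence and relative Tor-amplitude directly. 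The Tor-amplitude computation via the Postnikov filtration of $\sG\otimes^{\mathrm L} g^\ast M$ is a clean direct argument that avoids the paper's reduction entirely.

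Two places in your pseudo-coherence step need tightening. First, saying ``$(i_f)_\ast\sF$ is pseudo-coherent of finite Tor-amplitude over $Y$'' invokes only the \emph{relative} condition, and tensoring a pseudo-coherent complex against something of merely relative finite Tor-amplitude need not stay pseudo-coherent; what you actually need (and have) is that $(i_f)_\ast\sF$ is \emph{absolutely} perfect on $V$, which is precisely Lemma~\ref{globallyperfect}. Second, the claim that $j_\ast$ preserves pseudo-coherence along closed embeddings is not automatic for derived schemes: it amounts to $(i_g)_\ast\sO_Y$ being pseudo-coherent on $W$, which holds here because $\widebar Y$ is locally Noetherian and $\sO_Y$ is bounded above with coherent cohomology, but this deserves a sentence. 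Once these are made explicit your argument goes through; the paper's reduction-to-closed-immersions strategy sidesteps both points by converting the whole problem into a single absolute-perfection check via Lemma~\ref{globallyperfect}.
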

\begin{proof}
  The question is local on $X$, so, similar to \cite[Proposition 1.11]{SGA6}, we replace $X$ with an open neighborhood.  We can further refine to assume that $\widebar{f}$, $\widebar g$ and $\widebar{g \circ f}$ are finite type. We then have the following diagram:
  \begin{displaymath}
    \xymatrix{X \ar[r]^i \ar[rd]^f & \mA^a_Y \ar[d]^{s^\prime} \ar[r]^j & \mA^{a + b}_Z \ar[d]^{s} \\ & Y \ar[r]^k \ar[rd]^g & \mA_Z^b \ar[d] \\ & & Z}
  \end{displaymath}
The horizontal morphisms are closed immersions and the vertical morphisms are smooth.   From Lemma \ref{globallyperfect}, $k_\ast \sG$ is perfect. By Lemma \ref{smoothperfect}, $k_\ast \sG$ is perfect if and only if $s^{\ast} k_\ast \sG$ is as well.  Applying base change, we see that $\sG \in \perf(g)$ if and only if $j_\ast s^{\prime \ast} \sG \in \perf(\mA^{a + b}_Z)$.  By Lemma \ref{invariance}, this is true if and only if $s^{\prime\ast} \sG \in \perf(j)$.

The goal is to show that $j_\ast i_\ast (\sF \otimes_{\sO_X} f^\ast \sG)$ is perfect. Since $f^\ast \cong i^\ast s^{\prime \ast}$, after applying the projection formula for $i$, the above observation implies that we can start with $p^\ast \sG$ and replace $f$ and $g$ with $i$ and $j$, i.e., we can assume that $f$ and $g$ are closed immersions.  With the setup, the result is now clear: by the projection formula, $(g \circ f)_\ast (\sF \otimes f^\ast \sG) \cong g_\ast (f_\ast \sF \otimes \sG)$.   By definition $f_\ast \sF$ is perfect on $Y$.  Reducing our locality again, we can assume that $f_\ast \sF$ is strict perfect and is  ``built'' out of a finite number of locally free $\sO_Y$-modules.  The problem is thus reduced to the case that $f_\ast \sF$ is locally free.  Reducing locality once again, we can assume that $f_\ast \sF$ is free and the result is clear.  
\end{proof}

The pushforward operation $K([f \circ g]) \to K([g])$ for $f$ a proper morphism is defined as $[f_\ast]([\sF]) := [f_\ast \sF]$ for any $\sF \in \perf(f \circ g)$.  The following theorem, proved exactly as  \cite[Proposition 4.8]{SGA6}, ensures the validity of the operation.

\begin{proposition}
\label{bivariantpush}
 Given the commutative diagram
 \begin{displaymath}
   \xymatrix{ X \ar[rr]^j \ar[dr]^f & & W \ar[ld]^h \\ & Y &}
 \end{displaymath}
with $j$ proper then $j_\ast \sF \in \perf(h)$ for any $\sF \in \perf(f)$.
\end{proposition}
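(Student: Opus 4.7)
The plan is to follow the pattern of SGA 6, Proposition 4.8: factor $j$ as a closed immersion into a projective bundle over $W$ followed by the smooth proper projection, fix an auxiliary factorization of $h$ via Lemma \ref{factorizeit}, and combine these so that Proposition \ref{invariance} supplies a factorization-invariant description of $\sF$ which can then be pushed down along a smooth proper projection via the projection formula.

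First, since $X$ and $W$ are quasi-projective over the smooth base $B$ and $j$ is proper, $X$ is projective over $W$; I factor $j = \pi \circ \iota$, where $\iota: X \hookrightarrow \mP^n_W$ is a closed immersion and $\pi: \mP^n_W \to W$ is the canonical smooth proper projection. By Lemma \ref{factorizeit}, I also fix a factorization $h = s' \circ i'$ with $i': W \hookrightarrow Z'$ a closed immersion and $s': Z' \to Y$ smooth. Base-changing $\pi$ along $i'$ yields a closed immersion $\widetilde{i}: \mP^n_W \hookrightarrow \mP^n_{Z'}$ and a smooth proper projection $\pi': \mP^n_{Z'} \to Z'$ with $i' \circ \pi = \pi' \circ \widetilde{i}$. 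Setting $\widetilde{\iota} := \widetilde{i} \circ \iota$ (still a closed immersion) and $s'' := s' \circ \pi'$ (still smooth), I obtain a factorization $f = h \circ j = s'' \circ \widetilde{\iota}$ of the type accepted by Definition \ref{relperf}. Since $\sF \in \perf(f)$, Proposition \ref{invariance} (applied with the identity étale cover) yields $\widetilde{\iota}_\ast \sF \in \perf(s'')$; that is, $\widetilde{\iota}_\ast \sF$ is pseudo-coherent on $\mP^n_{Z'}$ and of finite Tor-amplitude over $Y$.

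Using commutativity of the square, $i'_\ast j_\ast \sF \cong \pi'_\ast \widetilde{i}_\ast \iota_\ast \sF \cong \pi'_\ast \widetilde{\iota}_\ast \sF$, so to conclude $j_\ast \sF \in \perf(h)$ it is enough to verify that $\pi'_\ast$ carries $\perf(s'')$ into $\perf(s')$. Pseudo-coherence is preserved because $\pi'$ is proper. For finite Tor-amplitude over $Y$, the projection formula (Lemma \ref{everythingsperfect}) gives, for any $\sG \in \QC(Y)$,
\[
(s')^\ast \sG \otimes_{\sO_{Z'}} \pi'_\ast \widetilde{\iota}_\ast \sF \;\cong\; \pi'_\ast \bigl((s'')^\ast \sG \otimes_{\sO_{\mP^n_{Z'}}} \widetilde{\iota}_\ast \sF\bigr).
\]
The inner tensor product sits in an amplitude range $[a,b]$ that is independent of $\sG$ by the finite Tor-amplitude of $\widetilde{\iota}_\ast \sF$ over $s''$; applying $\pi'_\ast$ then shifts amplitude by at most $n$, the relative dimension of $\pi'$, yielding a uniform bound as required. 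The one genuinely nontrivial ingredient is this bounded cohomological dimension of $\pi'_\ast$ in the derived simplicial-commutative setting; the remaining manipulations are routine applications of base change, the commutative square, and Proposition \ref{invariance}.
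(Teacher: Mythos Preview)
Your proposal is correct and is precisely the SGA6 argument the paper defers to; the paper gives no independent proof, stating only that the result is ``proved exactly as \cite[Proposition 4.8]{SGA6}''. The one cosmetic point is that the ambient bundle is more naturally $\mP_W(\sE)$ for a locally free $\sE$ on $B$ (as in the construction of Lemma \ref{factorizeit} and the proper-pushforward step of Theorem \ref{dch_bivariant}) rather than literally $\mP^n_W$, but this does not affect the argument.
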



For the last operation, given $Y^\prime \xrightarrow{h} Y$ and $X \xrightarrow{f} Y$, we obtain a homotopy fiber diagram
 \begin{displaymath}
   \xymatrix{W \ar[r]^{f^\prime} \ar[d]^{h^\prime} & V \ar[d]^h \\ X \ar[r]^f & Y}
 \end{displaymath}
we define $K(h) \to K(h^\prime)$ as $[f^\ast]([\sF]) := [f^{\prime *} \sF]$.  We must show that $f^{\prime \ast} \sF \in \perf(g^\prime)$ for any $\sF \in \perf(g)$.  Using Lemma \ref{factorizeit}, factorize $g$ and pullback the factorization to get a series of homotopy fiber squares: 
\begin{align*}
\xymatrix{& X \times_Y S \ar@/_1pc/[dd]^(.3){s^\prime}|\hole \ar[r]^{f^{\prime \prime}} & S \ar@/_1pc/[dd]^{s} \\ X \times_Y Y^\prime \ar[r]^{f^\prime} \ar[rd]^{g^\prime} \ar[ru]^{i^\prime} &  Y^\prime \ar[rd]^g \ar[ru]^i & \\&  X \ar[r]^f &  Y} 
\end{align*}
Stability of closed and smooth morphisms under base change shows $X \times_Y Y^\prime \xrightarrow{i^\prime} X\times_Y S \xrightarrow{s^\prime} X \times_Y Y = X$ is a global factorization of $g^\prime$.  Since $i$ is proper, base change implies that $i^\prime_{\ast} f^{\prime \ast} \sF \cong f^{\prime \prime \ast} i_\ast \sF$.  However, by \ref{globallyperfect} $i_\ast \sF$ is perfect.  Since perfect is stable under pullback, the result then follows.




\begin{lemma}
\label{sameoverafield}
  For $f: X \to Y$ with $Y$ smooth over a field $k$, then $K([f]) \cong K([t_0(f)])$.
\end{lemma}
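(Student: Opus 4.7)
Since $Y$ is smooth over $k$ it is already discrete, so $\widebar Y = Y$ and $t_0(f) = \widebar f = f \circ \iota_X \colon \widebar X \to Y$. The plan is to produce mutually inverse homomorphisms
\begin{align*}
\psi \colon K([\widebar f]) \to K([f]), \qquad & [\sG] \mapsto [(\iota_X)_\ast \sG],\\
\phi \colon K([f]) \to K([\widebar f]), \qquad & [\sF] \mapsto \sum_i (-1)^i [\pi_i(\sF)],
\end{align*}
where in the definition of $\phi$ each homotopy sheaf $\pi_i(\sF)$ is regarded as an $\sO_{\widebar X}$-module via the identification $\pi_0(\sO_X) = \sO_{\widebar X}$. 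Both assignments come from exact functors on the underlying stable $\infty$-categories (closed-immersion pushforward on one side, passage to the long exact sequence of homotopy groups on the other), so the content of well-definedness on Grothendieck groups reduces to checking that the outputs lie in the prescribed relatively perfect subcategories.

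For $\psi$, pseudo-coherence of $(\iota_X)_\ast \sG$ follows from pseudo-coherence of $\sO_{\widebar X}$ as an $\sO_X$-module (a consequence of the finite presentation hypotheses built into $\dsch_{/B}$), and finite Tor-amplitude over $Y$ is preserved because $\iota_X$ is proper and perfect by Lemma \ref{everythingsperfect}, so the projection formula applies. The check for $\phi$ is where smoothness of $Y$ enters essentially: each $\pi_i(\sF)$ is pseudo-coherent on $\widebar X$ by pseudo-coherence of $\sF$, and $\sO_Y$ being regular forces every coherent $\sO_{\widebar X}$-module to have uniformly bounded Tor-amplitude over $Y$, so $\pi_i(\sF) \in \perf(\widebar f)$. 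Pseudo-coherence together with finite Tor-amplitude forces $\sF$ to be bounded, so the alternating sum is finite, and additivity of $\phi$ on cofiber sequences follows from the long exact sequence of homotopy groups.

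With well-definedness in place, both compositions are verified by Postnikov d\'evissage. For $\phi \circ \psi$, t-exactness of $(\iota_X)_\ast$ gives $\pi_i((\iota_X)_\ast \sG) = \pi_i(\sG)$, and the finite Postnikov tower of the bounded object $\sG$ yields $[\sG] = \sum_i (-1)^i [\pi_i(\sG)]$ in $K([\widebar f])$. For $\psi \circ \phi$, the Postnikov tower of $\sF$ is a finite sequence of cofiber triangles with successive cofibers $(\iota_X)_\ast \pi_n(\sF)[n]$, each already shown to lie in $\perf(f)$, giving $[\sF] = \sum_n (-1)^n [(\iota_X)_\ast \pi_n(\sF)]$ in $K([f])$. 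The hard part is the joint check that $(\iota_X)_\ast$ preserves $f$-perfection and that each $\pi_i(\sF)$ lies in $\perf(\widebar f)$; regularity of $Y$ is used irreducibly in the latter, and this is precisely why the statement does not extend beyond a smooth base.
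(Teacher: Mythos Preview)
Your proof is correct and follows essentially the same approach as the paper: both use the pushforward $(\iota_X)_\ast$ in one direction and the alternating sum of homotopy/cohomology sheaves in the other, with regularity of the ambient factorization scheme (hence closure of $\bderive(S)$ under truncation) supplying the key step that each $\pi_i(\sF)$ is again relatively perfect. The only cosmetic difference is that the paper argues surjectivity and injectivity of $(\iota_X)_\ast$ separately rather than writing down the inverse $\phi$ explicitly, but the Postnikov d\'evissage and long exact sequence arguments are identical in substance.
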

\begin{proof}
 Since $\iota_X$ is proper there is a natural morphism $K([t_0(f)]) \xrightarrow{\iota_{X\ast}} K([f])$.    Let $g: Y \to \spec k$ be the structure morphism and $X \xrightarrow{i} S \xrightarrow{s} Y$ a choice of global factorization. The global factorization chosen for $f$ will suffice for $t_0(f)$ as well.  Composition of smooth morphisms is smooth, so $S$ is smooth over $k$,  and $\mr{D}^b_{fl, g \circ s}(S) \cong \perf(S)$.  Since $S$ is discrete and $fppf$ over $\spec k$, it is locally Noetherian and $\mr{D}^b_{fl, g \circ j}(S) \cong \bderive(S)$, which is closed under the natural t-structure truncations $\tau_{\leq n}$.  

This is enough to show for $\sF \in \perf(f)$, $\sH^i(\sF) \in \perf(f)$.  Since $\sH^i(\sF)$ is naturally a $\sO_{\widebar X}$-module, $\sH^i(\sF) \in \perf(t_0(f))$.  The Harder-Narasimhan filtration provided by the t-structure ensures the existence of a $\alpha \in K([t_0(f)])$ such that $\iota_{X\ast}\alpha = [\sF]$.  Thus $[\iota_{X \ast}]$ is surjective.  Injectivity follows from the fact that if $\sH \to \sG \to \sI$ is a distinguished triangle in $\perf(f)$, then the resulting relation on the replacements $\Sigma [\sH^i(\sH)]$, $\Sigma [\sH^i(\sG)]$, and $\Sigma [\sH^i(\sI)]$ can be deduced from the accompanying long exact sequence of cohomology groups (induced by the t-structure).  This latter object exists in the realm of $\perf(f \circ \iota_X)$. Since $\iota_{X}$ is conservative, injectivity follows.
\end{proof}
\begin{remark}
  Since $B$ is assumed to be smooth, this result extends to the case that $Y$ is smooth over the base.
\end{remark}

\begin{question}
  It is currently unclear if $K(f) = K(t_0(f))$ for all $f$.
\end{question}

\subsubsection{Verification of bivariant identities}

The verification of A1) associativity of the product,  A2) functorality of pushforward,   A3) functorality of pullback,  A13) commutativity of pullback with product,  all follow directly from the definitions and well known properties of pushforward ($f_\ast$) and pullback ($f^\ast$).  Property A12) commutativity of pushforward with product is an easy consequence of the projection formula (which applies after Lemma \ref{everythingsperfect});  property A23) commutativity of pushforward and pullback follows from base change.  We will explicitly work out property A123) satisfies the projection formula.

Given the diagram

\begin{displaymath}
  \xymatrix{X^\prime \ar[r]^{g^\prime} \ar[d]^{f^\prime} & X \ar[d]^f &  \\ Y^\prime \ar[r]^g & Y \ar[r]^h & Z}
\end{displaymath}
with $g$ proper.  Let $\sF \in \perf(f)$ and  $\sG \in \perf(h \circ g)$, then
\begin{align*}
 [g^\prime_\ast](([g^\ast][\sF]) \cdot [\sG]) &=  [g^\prime_\ast (g^{\prime \ast} \sF \otimes_{\sO_X} f^{\prime \ast} \sG)]\\ 
&=  [\sF \otimes g^\prime_\ast f^{\prime \ast}\sG] \qquad \textrm{from the projection formula} \\
&= [\sG \otimes f^\ast g_\ast \sG] \qquad \textrm{from base change}\\
& = ([g_\ast][\sG]) \cdot [\sF]
\end{align*}

\subsubsection{The orientation}

\begin{lemma}
\label{qsorientable}
Suppose $f: X \to Y$ is quasi-smooth.  Then $\sO_X$ is $f$-perfect.
\end{lemma}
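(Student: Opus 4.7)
My strategy is to reduce the problem, via a global factorization of $f$, to the local Koszul description of a quasi-smooth closed immersion.

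First I would apply Lemma~\ref{factorizeit} to fix a global factorization $f = s \circ i$ with $s: S \to Y$ smooth and $i: X \to S$ a closed immersion. The fiber sequence of relative cotangent complexes
\[
i^{\ast} \mL_{S/Y} \to \mL_{X/Y} \to \mL_{X/S},
\]
together with the local freeness of $\mL_{S/Y}$ (since $s$ is smooth) and the Tor-amplitude $[-1,0]$ perfection of $\mL_{X/Y}$ (since $f$ is quasi-smooth), forces $\mL_{X/S}$ to be perfect of Tor-amplitude $[-1,0]$. Hence $i$ is itself a quasi-smooth closed immersion.

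By Lemma~\ref{globallyperfect}, to prove $\sO_X \in \perf(f)$ it suffices to show that $i_\ast \sO_X$ is a perfect $\sO_S$-module. Perfection is local on $S$, and I would verify it through the Koszul description of Proposition~\ref{localnature}: around any point of $X$, passing to a Zariski neighborhood $U \subset S$ of its image, we obtain $U \times_S X \cong \spec K(\sE; s')$ for a locally free sheaf $\sE$ on $U$ and a section $s' \in \sE(U)$. Since $K(\sE; s')$ is a simplicial Koszul complex on a finite rank locally free sheaf, it is explicitly a bounded complex of finite rank locally free $\sO_U$-modules, so $i_\ast \sO_X|_U$ is perfect on $U$; perfection of $i_\ast \sO_X$ follows globally.

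The main obstacle is a hypothesis mismatch: Proposition~\ref{localnature} is stated with the target discrete, whereas the target $S$ produced by Lemma~\ref{factorizeit} is only smooth over the (possibly non-discrete) $Y$. I would address this either by extending Proposition~\ref{localnature} to general targets---the critical lifting step in its proof uses discreteness of $A$ only to upgrade a $\pi_0$-lift of finitely many sections to a full lift, and projectivity of the conormal bundle together with the connectivity of the unit map still produces such a lift for a general simplicial commutative target---or by first shrinking to a small enough Zariski neighborhood on $S$ that the required Koszul data is pulled back from the truncation $\widebar{S}$, where the original proposition applies verbatim. Either route yields the local Koszul model and hence the desired perfection.
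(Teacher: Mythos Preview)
Your argument is correct and close in spirit to the paper's, with two minor differences worth noting. The paper works locally on both $X$ and $Y$ from the outset: it factors $f$, locally, as a quasi-smooth embedding into affine space over an open subset of $Y$ followed by the projection (citing the discussion preceding \cite[Proposition 3.4.17]{lurie_thesis}), and then concludes via Proposition~\ref{invariance} together with Proposition~\ref{localnature}. You instead exploit the quasi-projective hypotheses of $\dsch_{/B}$ to take a global factorization via Lemma~\ref{factorizeit}, verify the embedding is quasi-smooth via the cotangent fiber sequence, and conclude via Lemma~\ref{globallyperfect}. Both routes land on the same final step and the same obstacle you correctly flag---the discreteness hypothesis on the target in Proposition~\ref{localnature}---and the paper handles it exactly as your first workaround suggests, by appealing to the \emph{method} of that proposition rather than its stated form. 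Your second workaround (pulling back Koszul data from the truncation $\widebar{S}$) is less convincing as written, since the Koszul presentation must live over $S$ itself; the direct extension of the proof is the cleaner fix.
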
 
\begin{proof}
Using methods similar to Proposition \ref{localnature}, locally on $X$ and $Y$ we can factor $f$ as a quasi-smooth embedding into affine space over an open subset of $Y$, followed by the natural projection (see the discussion preceding \cite[Proposition 3.4.17]{lurie_thesis}).  The result then follows from Proposition \ref{invariance} and Proposition \ref{localnature}.


\end{proof}

This lemma allows us to define an orientation to the class of quasi-smooth morphisms: let $\theta$ be the orientation of the bivariant theory $K$ determined by $\theta(f) = [\sO_X] \in K([f])$ for $f: X \to Y$ quasi-smooth.  This orientation satisfies the property $\theta(f) \cdot \theta(g) = \theta(fg)$.  We say that this orientation is ``multiplicative''.

\subsection{Bivariant operational derived Chow}

\subsubsection{$\dCh(X)$}
Given $X \in \dSch_{B}$, define $\dCh(X)$ via the standard definition: the free abelian generated by algebraic subvarieties modulo rational equivalence.   Here by algebraic variety we mean discrete algebraic variety.  This ensures that the standard formulas for pushforward and flat pullback remain valid.  It is clear that $\iota_X$, being a proper morphism, induces an isomorphism (via the universality of the adjunction unit morphism).   

\begin{lemma}
\label{pushforward_formula}
If $f: X \to Y$ is proper,   $f_\ast = \iota_{X\ast}^{-1} \circ t_0(f)_\ast \circ \iota_{Y\ast}$.  Likewise, if $f: X \to Y$ is flat, then $f^\ast = \iota_{Y\ast}^{-1} \circ t_0(f)^{-1} \circ \iota_{X\ast}$
\end{lemma}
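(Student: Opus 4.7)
The plan is to reduce both formulas to naturality of the square
\[
\xymatrix{ \widebar X \ar[r]^{\widebar f} \ar[d]_{\iota_X} & \widebar Y \ar[d]^{\iota_Y} \\ X \ar[r]_f & Y }
\]
obtained by applying $t_0$ to $f$. Since $\dCh(X)$ is by convention generated by discrete subvarieties, which are the same as the generators of $\mr{CH}(\widebar X)$, $\iota_{X\ast}$ is tautologically an isomorphism, and the claimed formulas simply express $f_\ast$ and $f^\ast$ on $\dCh$ as conjugates by $\iota$ of the classical operations $\widebar f_\ast$ and $\widebar f^\ast$.

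For the proper case, $\iota_X$ and $\iota_Y$ are proper (noted in the notation section), so all four morphisms of the square -- including $\widebar f$ -- are proper. Functoriality of proper pushforward applied around the square gives
\[
f_\ast \circ \iota_{X\ast} = (f \circ \iota_X)_\ast = (\iota_Y \circ \widebar f)_\ast = \iota_{Y\ast} \circ \widebar f_\ast,
\]
and composing with $\iota_{X\ast}^{-1}$ yields the identity. For the flat case I would first check that $\widebar f = t_0(f)$ is classically flat whenever $f$ is derived flat; this follows because t-exactness of the derived pullback functor restricts to t-exactness on the underlying classical truncations. Any class in $\dCh(Y)$ is by construction represented by a discrete subvariety $V \subset \widebar Y$, so it suffices to check $f^\ast \iota_{Y\ast}[V] = \iota_{X\ast} \widebar f^\ast [V]$: writing $X \times_Y V = (X \times_Y \widebar Y) \times_{\widebar Y} V$, flatness of $f$ ensures $X \times_Y \widebar Y \simeq \widebar X$, so $f^{-1}(V)$ coincides with $\widebar f^{-1}(V)$ on the nose and the cycle classes agree.

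The main obstacle is this base-change identification in the flat case, namely verifying that the homotopy fiber product $X \times_Y \widebar Y$ is itself classical and equivalent to $\widebar X$. This is a standard consequence of flat pullback commuting with truncation ($t_0$ being the $(-1)$-truncation functor and $\iota_Y$ its counit), but it is the one nontrivial ingredient and deserves explicit invocation. Once it is granted, the translation of the cycle-level equality into $\dCh$ is formal and establishes both halves of the lemma.
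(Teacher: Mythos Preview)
Your proof is correct and follows essentially the same approach as the paper. The paper also reduces to the commutative square $f\circ\iota_X = \iota_Y\circ \widebar f$, working directly with a generator $[V]$ (a discrete subvariety $V\xrightarrow{h}X$, which factors through $\iota_X$ by universality) rather than invoking abstract functoriality; for the flat case it likewise singles out the identification $\widebar X \cong X\times_Y \widebar Y$ as the one nontrivial input, deducing flatness of $t_0(f)$ from it by base change rather than via your $t$-exactness argument---but these are cosmetic differences only.
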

\begin{proof}
  Let $\alpha \in \dCh(X)$.  By linearity, we can assume $\alpha = [V]$ for some algebraic variety $V \xrightarrow{h} X$.  By universality, $h = \iota_X \circ h^\prime$ and we have a commutative diagram
  \begin{displaymath}
    \xymatrix{ V \ar[d]^{h^\prime}  & & \\ \widebar{X} \ar[r]^{t_0(f)} \ar[d]^{\iota_X} & \widebar{Y} \ar[d]^{\iota_Y} \\ X \ar[r]^f & Y}
  \end{displaymath}
Thus, $f \circ h = \iota_Y \circ t_0(f) \circ i^\prime$.  The formula then follows.  The statements for $f$ flat are similar, one only needs to observe that $t_0(f)$ is flat by flat base change since $\widebar X \cong X \times_Y \widebar Y$ in this case.
\end{proof}

The following lemma is similar to the ``underived'' version and is stated without proof.
\begin{lemma}
\label{base_change}
  Given the commutative diagram
  \begin{displaymath}
     \xymatrix{W \ar[r]^{f^\prime} \ar[d]^{g\prime} & V \ar[d]^g \\ X \ar[r]^f & Y}
  \end{displaymath}
with $f$ proper and $g$ flat, then $g^\ast f_\ast = f_\ast^{\prime}g^{\prime \ast}$.
\end{lemma}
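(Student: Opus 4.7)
The plan is to reduce the statement to the classical base-change identity for discrete schemes by invoking Lemma \ref{pushforward_formula}, which rewrites both $f_\ast$ and $g^\ast$ as conjugates of their truncations by the adjunction maps $\iota$. First I would observe that since $g$ is flat, the derived fiber product $W = X \times_Y V$ has underlying scheme $\widebar W \cong \widebar X \times_{\widebar Y} \widebar V$: flatness is preserved under (derived) base change and a flat morphism has no higher Tors, so the derived and classical tensor products coincide at the level of $\pi_0$. Consequently the truncations $t_0(f')$ and $t_0(g')$ fit into a classical Cartesian square with $t_0(f)$ proper and $t_0(g)$ flat, and the natural identification $\iota_{W\ast}: \dCh(\widebar W) \xrightarrow{\sim} \dCh(W)$ intertwines the derived maps with their truncations.

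Next I would apply Lemma \ref{pushforward_formula} to both sides. On the one hand,
\begin{displaymath}
g^\ast f_\ast = \bigl(\iota_{V\ast} \circ t_0(g)^\ast \circ \iota_{Y\ast}^{-1}\bigr)\circ\bigl(\iota_{Y\ast}\circ t_0(f)_\ast \circ \iota_{X\ast}^{-1}\bigr) = \iota_{V\ast}\circ t_0(g)^\ast \circ t_0(f)_\ast \circ \iota_{X\ast}^{-1},
\end{displaymath}
and on the other hand, using the factorization of $f'$ and $g'$ through their truncations together with the identification $\widebar W \cong \widebar X \times_{\widebar Y} \widebar V$,
\begin{displaymath}
f'_\ast g'^\ast = \iota_{V\ast}\circ t_0(f')_\ast \circ t_0(g')^\ast \circ \iota_{X\ast}^{-1}.
\end{displaymath}
The two expressions coincide provided $t_0(g)^\ast \circ t_0(f)_\ast = t_0(f')_\ast \circ t_0(g')^\ast$, which is precisely the classical base-change formula in \cite{fulton_intersection} applied to the (classical) Cartesian square of discrete schemes with proper vertical map $t_0(f)$ and flat horizontal map $t_0(g)$.

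The only real point requiring care is the identification of $\widebar W$ with the classical fiber product, since otherwise the classical base change would not be directly available; once this is secured the argument is purely formal. Everything else is an assembly of the formulas from Lemma \ref{pushforward_formula} together with the functoriality of $\iota$, so I would present the proof as essentially a one-line calculation following the verification of the underlying-scheme identity.
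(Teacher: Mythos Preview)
Your proposal is correct and is exactly the natural argument; the paper in fact states this lemma without proof, so there is nothing to compare against beyond the remark that it is ``similar to the underived version''. One small refinement: the identification $\widebar W \cong \widebar X \times_{\widebar Y} \widebar V$ does not actually require flatness of $g$, since $\pi_0$ of a derived tensor product always coincides with the classical tensor product; what flatness buys you is that $t_0(g')$ is again flat, which is what you need to invoke the classical base-change formula from \cite{fulton_intersection} on the truncated square.
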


We now define virtual Gysin homomorphisms for quasi-smooth embeddings.  The functorality of the following construction can be found in \cite{virt_pullback}.  If $j: X \to Y$ is a quasi-smooth embedding, then $\iota_X^\ast \mL_{X/Y}$ is locally free sheaf.  We define $j^!_{vir}$ as the series of morphisms
\begin{align}
\label{virtual_def}
  \dCh(Y) \xrightarrow{\iota_Y^{-1}} \dCh(\widebar Y) \xrightarrow \dCh(C_{\widebar X} \widebar Y) \rightarrow \mV(\iota_X^\ast \mL_{X/Y}) \xrightarrow{\pi^{\ast -1}} \dCh(\widebar X) \xrightarrow{\iota_{X\ast}} \dCh(X)
\end{align}

\subsubsection{The bivariant operational theory}

The confined morphisms and ``independent squares'' for derived Chow are the same as K-theory.  Namely, let $\dCh$ denote the operational theory with the confined maps consisting of proper maps and the ``independent squares'' consisting of  homotopy fiber squares.  

The operational adjective means given $[f] \in \pi_0(\hom_{\dSch_S}(X, Y))$, $\sigma \in \dCh([f])^j$ consists of the following data:  for each $V \to Y$ and homotopy fiber square
\begin{displaymath}
  \xymatrix{W \ar[r] \ar[d] & V \ar[d] \\ X \ar[r] & Y}
\end{displaymath}
a $j$-graded homomorphism $\sigma^V_W: \dCh(V) \to \dCh(W)$ compatible with proper pushforward, flat pullback, and virtual Gysin homomorphisms induced by quasi-smooth embeddings.  
\begin{notation}
We denote the product of two operational classes $\sigma$, $\omega$ as $\sigma \cdot \omega$.  When we are referring to  pullback, pushforward, and virtual Gysin homomorphisms as bivariant operations, we will use brackets.  To provide clarity we will sometimes put the source and target groups as superscripts and subscripts, respectively.  
\end{notation}

\begin{proposition}
\label{gysin_commute}
Suppose $\sigma$ commutes with proper pushforward and flat pullback.  Then $\sigma \in \dCh([f])$  commutes with virtual Gysin homomorphisms if and only if it commutes with the case of $\{0\} \to \mA_k^1$, where $k$ is our base field.
\end{proposition}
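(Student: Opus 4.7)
The plan is to prove the nontrivial direction: assuming $\sigma$ commutes with proper pushforward, flat pullback, and the Gysin $0^!_{\mA^1}$ associated to $\{0\}\hookrightarrow\mA^1_k$, I would deduce that $\sigma$ commutes with $j^!_{vir}$ for an arbitrary quasi-smooth embedding $j$. The converse is immediate because $\{0\}\hookrightarrow\mA^1_k$ is itself a quasi-smooth embedding and hence covered by the general hypothesis.

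First I would unpack the six-factor decomposition of $j^!_{vir}$ from~(\ref{virtual_def}). Four of the factors need no work: the adjunction maps $\iota_Y^{-1}$ and $\iota_{X\ast}$ are (inverse) pushforwards along the proper morphisms $\iota_Y$, $\iota_X$; the map $\dCh(C_{\widebar X}\widebar Y)\to\dCh(\mV(\iota_X^\ast\mL_{X/Y}))$ is a proper pushforward along the closed immersion of the cone into the bundle; and $\pi^{\ast-1}$ is the inverse of flat pullback along a vector bundle projection. Compatibility of $\sigma$ with these four factors is automatic from the two standing hypotheses, so the problem reduces to showing that $\sigma$ commutes, in every base change, with the classical specialization map to the normal cone $\dCh(\widebar Y)\to\dCh(C_{\widebar X}\widebar Y)$.

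Next I would realize the specialization using the deformation to the normal cone $\pi:M^\circ\to\mA^1_k$, whose special fiber is $C_{\widebar X}\widebar Y$ and whose open complement is $\widebar Y\times\mG_m$. The inclusion of the special fiber fits into the Cartesian square
\begin{displaymath}
\xymatrix{C_{\widebar X}\widebar Y\ar[r]^{i_0}\ar[d]&M^\circ\ar[d]^{\pi}\\\{0\}\ar[r]&\mA^1_k}
\end{displaymath}
so by the operational base-change axiom, $i_0^!$ is precisely the $M^\circ$-component of the bivariant class $0^!_{\mA^1}$. Following \cite[\S5.2]{fulton_intersection}, the specialization sends $\alpha\in\dCh(\widebar Y)$ to $i_0^!\widetilde\alpha$, where $\widetilde\alpha\in\dCh(M^\circ)$ is any lift restricting along the open immersion $\widebar Y\times\mG_m\hookrightarrow M^\circ$ to the flat pullback of $\alpha$ under the projection $\widebar Y\times\mG_m\to\widebar Y$; the answer is independent of the lift because $i_0^!$ annihilates the resulting ambiguity.

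To conclude, for every base change $V\to\widebar Y$ I would check the required commutativity by choosing a lift $\widetilde\alpha$, applying $\sigma$, and using the flat-pullback hypothesis (for both the open immersion and the projection) to see that $\sigma(\widetilde\alpha)$ is again a valid lift on the base-changed deformation space. Interchanging $\sigma$ with $i_0^!$ then invokes the Cartesian square above together with the $0^!_{\mA^1}$-hypothesis. The main obstacle will be this last chase: the construction of $\widetilde\alpha$ from $\alpha$ is not functorial, so one must argue simultaneously over all base changes that $\sigma$ carries any valid lift to a valid lift, and lean on lift-independence of $i_0^!\widetilde\alpha$ to close the argument.
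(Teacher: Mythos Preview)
Your overall strategy matches the paper's: decompose $j^!_{vir}$, deal with the easy factors via the pushforward and flat-pullback hypotheses, and express the remaining specialization step via the ambient deformation space so that the hypothesis on $\{0\}\hookrightarrow\mA^1$ applies. But there is a genuine gap in the way you set up the decomposition.

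First a notational point: in the proposition $f:X\to Y$ is the morphism carrying $\sigma$, and the quasi-smooth embedding is a separate datum $j:Z'\to Z$ together with $V\to Z$ and $V\to Y$; the commutativity you must prove is $\sigma^{V'}_{W'}\cdot j'^!_{vir}=j''^!_{vir}\cdot\sigma^V_W$ for the base-changed embeddings $j':V'\to V$ and $j'':W'\to W$. Your write-up collapses $j$ and $f$ onto the same letters, which obscures the real issue below.

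The substantive gap is your claim that the four non-specialization factors are ``automatic''. After base change, equation~(\ref{virtual_def}) for $j'^!_{vir}$ passes through the \emph{intrinsic} normal cone $C_{\widebar{V'}}\widebar V$, and on the $W$-side through $C_{\widebar{W'}}\widebar W$. For $\sigma$ to interpolate at that stage you would need $C_{\widebar{W'}}\widebar W \simeq C_{\widebar{V'}}\widebar V\times_Y X$, i.e.\ that normal cones commute with the (generally non-flat) base change along $f$. They do not. Hence neither the specialization step nor the cone-to-bundle pushforward, as written, comes equipped with the Cartesian square needed to invoke the proper-pushforward compatibility of $\sigma$. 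The same problem is hidden in your last paragraph: the base-changed deformation space $V\times_Z M^\circ_{\widebar{Z'}}\widebar Z$ has special fiber $V'\times_{\widebar{Z'}}C_{\widebar{Z'}}\widebar Z$, not $C_{\widebar{V'}}\widebar V$, so your $i_0^!\widetilde\alpha$ does not land where your earlier decomposition expects it.

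The paper repairs exactly this: it replaces the intrinsic cone by the pulled-back cone $C^{V'}_{\widebar{Z'}}\widebar Z:=V'\times_{\widebar{Z'}}C_{\widebar{Z'}}\widebar Z$ (and likewise $N_{V'}:=V'\times_{Z'}N$), which \emph{do} satisfy $C^{W'}_{\widebar{Z'}}\widebar Z\simeq C^{V'}_{\widebar{Z'}}\widebar Z\times_Y X$ and $N_{W'}\simeq N_{V'}\times_Y X$. It then proves the non-obvious identity that the composite $\eta_V:\dCh(V)\to\dCh(C_{\widebar{V'}}\widebar V)\to\dCh(C^{V'}_{\widebar{Z'}}\widebar Z)$ equals $(i_0')^!_{vir}\circ\mr{pr}^\ast$, so that $j'^!_{vir}=(\pi_{V'}^\ast)^{-1}\circ m_{V'\ast}\circ\eta_V$ is expressed entirely through maps that genuinely base-change along $f$. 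Once you insert this factorization, the rest of your plan (lift, apply $\sigma$, use the $\{0\}\hookrightarrow\mA^1$ hypothesis) goes through.
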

\begin{proof}
Since $\{0\} \to \mA_k^1$ is quasi-smooth embedding, one direction is clear.  Suppose $\sigma$ commutes with this embedding and let $Z^\prime \xrightarrow{j} Z$ be a quasi-smooth embedding.   This embedding gives rise to the following diagram, with all squares Cartesian. 
\begin{displaymath}
\xymatrix{W^\prime \ar[r]^{l^\prime} \ar[d]^{j^{\prime \prime}} & V^\prime \ar[r]^{h^\prime} \ar[d]^{j^\prime} &  Z^\prime \ar[d]^{j} \\ W \ar[r]^l \ar[d] & V \ar[d]^g \ar[r]^h & Z \\ X \ar[r]^f & Y &   }
\end{displaymath}
We must show $j^{\second !}_{vir} \cdot \sigma^V_W = \sigma^{V^\prime}_{W^\prime} \cdot j^{\prime !}_{vir}$.   To do so, we will obtain a different formulation for $j^{\prime !}_{vir}$. 

To simplify notation, let $C^{V^\prime}_{\widebar{Z^\prime}} \widebar Z \cong V^\prime \times_{\widebar{Z^\prime}} C_{\widebar{Z^\prime}} \widebar Z$ and  $M^{V}_{\widebar{Z^\prime}} \widebar Z \cong V \times_Z M^\circ_{\widebar{Z^\prime}} \widebar Z$ (likewise for $W^\prime$ and $W$, respectively).  Further, denote $N = \mV(\mL_{Z^\prime/Z})$ and $N_{V^\prime} = V^\prime \times_{Z^\prime} N$ (likewise for $W^\prime$) and $\pi_{Z^\prime}: N \to Z^\prime$ (likewise for $V^\prime$ and $W^\prime$) be the natural morphism.    Note that  $V \times_Z C_{\widebar{Z^\prime}} \widebar Z \cong V^\prime \times_{Z^\prime} C_{\widebar{Z^\prime}} \widebar Z$
 and the flatness of $\pi_{Z^\prime}$ means $t_0(N) \cong \mV(\iota_{Z^\prime}^\ast \mL_{Z^\prime/Z})$. Since $h^{\prime \ast} \mL_{Z^\prime/Z} \cong \mL_{V^\prime/V}$, $N_{V^\prime} \cong \mV(\mL_{V^\prime/V})$ and $\mV(\iota_{V^\prime}^* \mL_{V^\prime/V}) \cong \widebar{N_{V^\prime}}$.  

The morphism $C_{\widebar{V^\prime}} \widebar V \to \widebar{N_{V^\prime}}$ in the definition of of $j^{\prime !}_{vir}$ (equation (\ref{virtual_def}) above) factors as
\begin{displaymath}
C_{\widebar{V^\prime}} \widebar V  \hookrightarrow \overline{C^{V^\prime}_{\widebar{Z^\prime}} \widebar{Z}} \hookrightarrow \widebar{N_{V^\prime}}
\end{displaymath}
Locally this follows from the following fact: if  $\widebar Z = \spec A$, $\widebar{Z^\prime} = \spec A/I$, $\widebar V = \spec B$, and $\widebar{V^\prime} = \spec B/J$ then then there is a natural surjective map from $B\otimes I \to J$ (underived).  Similarly, there exists a natural morphism $M^\circ_{\widebar{V^\prime}} \widebar V \to \overline{M^V_{\widebar{Z^\prime}} \widebar Z}$.
We will show the composite
\begin{displaymath}
\eta_V: \dCh(V) \to \dCh(C_{\widebar{V^\prime}} \widebar V) \to \dCh(\overline{C^{V^\prime}_{\widebar{Z^\prime}} \widebar Z}) \xrightarrow{\iota_{C^{V^\prime}_{\widebar{Z^\prime}} \widebar Z\ast}} \dCh(C^{V^\prime}_{\widebar{Z^\prime}} \widebar Z)
\end{displaymath}
can be alternatively defined (see below) and with this alternative formulation it will be clear $\sigma^{C^{V^\prime}_{\widebar{Z^\prime}}\widebar Z}_{C^{ W^\prime}_{\widebar{Z^\prime}}\widebar Z} \cdot \eta_V =  \eta_W \cdot  \sigma^V_W$ (where $\eta_W$ is defined similarly). 


The embedding of the normal cone gives rise to the diagram with each square Cartesian.  
\begin{displaymath}
  \xymatrix{C_{\widebar{Z^\prime}}^{W^\prime} \widebar Z \ar[d]^{i_0^{\prime \prime}} \ar[r] & C_{\widebar{Z^\prime}}^{V^\prime} \widebar Z \ar[r] \ar[d]^{i_0^\prime} & C_{\widebar{Z^\prime}} \widebar Z \ar[d]^{i_0} \ar[r] & 0 \ar[d] \\ M_{\widebar{Z^\prime}}^{ W} \widebar Z \ar[r] \ar[d] & M_{\widebar{Z^\prime}}^{V} \widebar Z \ar[r] \ar[d] & M^\circ_{\widebar{Z^\prime}} \widebar Z \ar[d] \ar[r] & \mA^1 \\ W \ar[r] \ar[d] & V \ar[r] \ar[d] & Z & \\ X \ar[r]^f & Y & &}
\end{displaymath}
Note that although $M^\circ_{\widebar{Z^\prime}} \widebar Z$ is discrete, $M^V_{\widebar{Z^\prime}} \widebar Z$ may not be.  The explicit formulas for proper pushforward and flat pullback ensure the localization sequence still holds for Zariski open subschemes.  The following commutative diagram is a consequence of $\sigma$ commuting with flat pullback and proper pushforward.

\begin{displaymath}
\xymatrix{ \dCh(C_{\widebar{Z^\prime}}^{W^\prime} \widebar Z) \ar[r]^{i_{0 \ast}^{\prime \prime}} \ar[d]^{\sigma^{C^{V^\prime}_{\widebar{Z^\prime}}\widebar Z}_{C^{W^\prime}_{\widebar{Z^\prime}}\widebar Z}} & \dCh(M_{\widebar{Z^\prime}}^W \widebar Z) \ar[r]^(.6){k^{\prime \prime \ast}} \ar[d]^{\sigma^{M^V_{\widebar{Z^\prime}}\widebar Z}_{M^{W}_{\widebar{Z^\prime}}\widebar Z}} & \mA^1_{W} \ar[d]^{\sigma^{\mA^1_{W}}_{\mA^1_{V}}} \ar[r]& \ar[d] 0 \\ 
\dCh(C_{\widebar{Z^\prime}}^{V^\prime} \widebar Z) \ar[r]^{i_{0 \ast}^{\prime}} \ar[r] & \dCh(M_{\widebar{Z^\prime}}^{V} \widebar Z) \ar[r]^(.6){k^{\prime \ast}} & \mA^1_{V} \ar[r] & 0}
\end{displaymath}
The morphism $(i^\prime_{0})^!_{vir}$  restricts to zero on the image of $i_{0 \ast}^\prime$ (since $\mL_{\{0\}/\mA^1}$ is a rank 1 trivial (shifted) locally free sheaf).  The exactness of the rows allows us to consider $(i_0^{\prime})^!_{vir}$ as a morphism $\dCh(\mA^1_{V}) \to \dCh(C_{\widebar{Z^\prime}}^{V^\prime} \widebar Z)$ and we have the composite 
\begin{displaymath}
\dCh(V) \xrightarrow{\mr{pr}^\ast} \dCh( \mA^1_{V}) \xrightarrow{(i^\prime_{0})^!_{vir}}  \dCh(C_{\widebar{Z^\prime}}^{V^\prime} \widebar Z)
\end{displaymath}
We claim $\eta_V = (i^\prime_0)_{vir}^! \circ \mr{pr}^\ast$.  This more or less follows from the similar case in \cite{fulton_intersection}. Let $[A] \in \dCh(V)$ where $A$ is a closed subvariety.  Then $\mr{pr}^\ast([A]) = [A \times \mA^1]$.  The natural morphism $M^\circ_{\widebar{V^\prime}} \widebar V \to M^V_{\widebar{Z^\prime}} \widebar Z$ ensures the existence of the of a class $\alpha = [M^\circ_{\widebar{V^\prime} \cap A} A] \in  \dCh(M^V_{\widebar{Z^\prime}} \widebar Z)$.  It is clear that $k^{\prime \ast} \alpha \cong \mr{pr}^\ast([A])$ and $(i_0^\prime)^!_{vir} \alpha \cong [C_{\widebar{V^\prime}\cap A} A]$.  In other words, we have the commutative diagram  
\begin{displaymath}
  \xymatrix{& \dCh(C_{\widebar{V^\prime}} \widebar V) \ar[dr] & \\ \dCh(V) \ar[ru] \ar[rr]^{(i^\prime_0)_{vir}^! \circ \mr{pr}^\ast} & & \dCh(C^{V^\prime}_{\widebar{Z^\prime}} \widebar Z)}
\end{displaymath}
The top composite is $\eta_V$, showing the claim.  Similar statements apply to  $(i_{0 \ast}^{\prime \prime})^!_{vir}$ and the definition of $\eta_W$.    By assumption $\sigma^{C^{V^\prime}_{\widebar{Z^\prime}}\widebar Z}_{C^{W^\prime}_{\widebar{Z^\prime}}\widebar Z} \cdot (i^\prime_{0})^!_{vir} = (i^\second_{0})^!_{vir}  \cdot \sigma^{M^{V}_{\widebar{Z^\prime}}\widebar Z}_{M^{W}_{\widebar{Z^\prime}}\widebar Z}$.  Therefore, $\sigma$ commutes with all morphisms defining $\eta_W$ and $\eta_V$; the desired commutativity of $\sigma$ with $\eta$ follows.

Let $m_{Z^\prime}: C_{\widebar{Z^\prime}} \widebar Z \to N$ denote the natural proper map.  We have the fiber diagram 
\begin{displaymath}
  \xymatrix{ C^{ W^\prime}_{\widebar{Z^\prime}} \widebar Z \ar[r] \ar[d]^{m_{ W^\prime}} &  C^{V^\prime}_{\widebar{Z^\prime}} \widebar Z \ar[r] \ar[d]^{m_{ V^\prime}} & C_{\widebar{Z^\prime}} \widebar Z \ar[d]^{m_{Z^\prime}}  \\
N_{W^\prime} \ar[r] \ar[d]^{\pi_{ W^\prime}} & N_{ V^\prime} \ar[r] \ar[d]^{\pi_{V^\prime}} & N \ar[d]^{\pi_{Z^\prime}}  \\ W^\prime \ar[r] \ar[d] &  V^{\prime} \ar[r] \ar[d] & Z^\prime     \\ X  \ar[r] & Y & }
\end{displaymath}

From above, it is clear
\begin{displaymath}
(j^\prime)^!_{vir} = \dCh(V) \xrightarrow{\eta_V} \dCh(C^{V^\prime}_{\widebar{Z^\prime}} \widebar Z) \xrightarrow{ (\iota_{C^{V^\prime}_{\widebar{Z^\prime}} \widebar Z \ast})^{-1}} \dCh(\overline{C^{V^\prime}_{\widebar{Z^\prime}} \widebar Z}) \xrightarrow{\widebar{m_{V^\prime}}_\ast} \widebar{N_{Z^\prime}} \xrightarrow{\iota_{V^\prime \ast} \circ (\widebar{\pi_{V^\prime}}^{\ast})^{-1}} \dCh(V^\prime) 
\end{displaymath}
It follows from Lemma \ref{pushforward_formula}, the composite of the last 3 morphisms can be alternatively defined as $(\pi_{V^{\prime}}^*)^{-1} \circ m_{V^\prime \ast}$ and thus $[j^!_{vir}]^V_{V^\prime} = j^{\prime !}_{vir}= (\pi_V^{*})^{-1} \circ m_{V\ast} \circ \eta_{V}$.  We now have
 \begin{align*}
   \sigma^{V^\prime}_{W^\prime} \cdot [j_{vir}^!]^V_{V^\prime} & = \sigma^{V^\prime}_{W^\prime} \cdot  (\pi_{V^\prime}^{\ast})^{-1} \cdot m_{V^\prime \ast} \cdot \eta_{V} \\
  & = (\pi_{W^\prime}^{*})^{-1}\cdot \sigma^{N_{V^\prime}}_{N_{W^\prime}} \cdot  m_{W^\prime \ast} \cdot \eta_{V} \quad  \textrm{(commutativity with $[\pi^\ast]$)} \\
 & = (\pi_{W}^*)^{-1} \cdot m_{W\ast} \cdot \sigma^{C^{V^\prime}_{\widebar{Z^\prime}}\widebar Z}_{C^{W^\prime}_{\widebar{Z^\prime}}\widebar Z} \cdot  \eta_{V}  \quad \textrm{(commutativity  with pushforward)}\\
 & = \pi_{W}^{*-1}\cdot m_{\widebar{W^\prime} \ast} \cdot \eta_{W} \cdot \sigma^{V}_{W}\\
 & = [j_{vir}^{!}]^{W}_{W^\prime} \cdot  \sigma^{V}_{W}
  \end{align*}
and commutativity is shown.


\end{proof}

\begin{corollary}
\label{flat_qs_ops}
  Let $f: X \to Y$.  If 
  \begin{enumerate}
  \item $f$ is flat, then $[f^\ast] \in \dCh([f])$
  \item $f$ is quasi-smooth, then $[f^!]_{vir} \in \dCh([f])$
  \item $f$ is smooth, then $[f^!]_{vir} = [f^*]$.
  \end{enumerate}
\end{corollary}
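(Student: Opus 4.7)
The plan is to verify, for each of the three operations, the three compatibilities required to lie in $\dCh([f])$: commutativity with proper pushforward, flat pullback, and virtual Gysin homomorphisms associated to quasi-smooth embeddings. Proposition \ref{gysin_commute} cuts the last (infinite) family of checks down to the single universal case $j_0: \{0\} \hookrightarrow \mA^1_k$, which becomes the common workhorse for all three parts.

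For (1), $[f^\ast]$ commutes with proper pushforward by Lemma \ref{base_change} applied to flat-proper squares, and with flat pullback by functoriality. For compatibility with $[j_0^!]_{vir}$, both composites in the resulting square factor through specialization to the fiber over $0 \in \mA^1$, so equality reduces to flat base change for the inclusion $\{0\}_Y \hookrightarrow \mA^1_Y$ pulled back along $f$, together with Lemma \ref{pushforward_formula}.

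For (2), $[f^!]_{vir}$ is given step by step by (\ref{virtual_def}) when $f$ is a quasi-smooth embedding; each of the five component morphisms (the two $\iota$ adjustments, the specialization to the normal cone, the embedding of the cone into the normal bundle, and $\pi^{\ast -1}$) commutes with proper pushforward and flat pullback by a Cartesian-square base change argument at that stage. For general quasi-smooth $f$, I would factor $f = p \circ i$ with $i$ a quasi-smooth embedding and $p$ smooth, as in Lemma \ref{qsorientable}, set $[f^!]_{vir} := [i^!]_{vir} \cdot [p^\ast]$, and cite \cite{virt_pullback} for independence of the factorization. Compatibility with another virtual Gysin then reduces via Proposition \ref{gysin_commute} to the $j_0$ case, which is the standard double-deformation-to-the-normal-cone argument interchanging the two specializations.

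Finally, for (3), when $f$ itself is smooth one may use the trivial factorization $f = f \circ \mathrm{id}_X$ with $\mathrm{id}_X$ a quasi-smooth embedding having zero virtual normal bundle. The corresponding copy of (\ref{virtual_def}) collapses to the identity (since $C_{\widebar X}\widebar X \cong \widebar X \cong \mV(\iota_X^\ast \mL_{X/X})$), so $[\mathrm{id}_X^!]_{vir}$ is the identity and $[f^!]_{vir} = [f^\ast]$. I expect the main difficulty to be the universal $j_0$ check for (2), where two layers of deformation must be carefully interchanged in the derived setting; parts (1) and (3) then follow by largely formal reductions.
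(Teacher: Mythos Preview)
Your proposal is correct and matches the paper's intent; the corollary is stated without proof, as an immediate consequence of Proposition~\ref{gysin_commute} together with the functoriality of virtual pullbacks cited from \cite{virt_pullback}. The difficulty you anticipate in (2) --- interchanging two deformations --- is already absorbed by the proof of Proposition~\ref{gysin_commute}, which expresses every virtual Gysin $[j^!]_{vir}$ for a quasi-smooth embedding as the composite $(\pi^\ast)^{-1}\cdot m_\ast \cdot (i_0)^!_{vir}\cdot \mr{pr}^\ast$ of flat pullbacks, a proper pushforward, and the single Gysin for $\{0\}\hookrightarrow\mA^1$; hence once $[j_0^!]_{vir}$ is known to commute with flat pullback and proper pushforward (classical, e.g.\ \cite[\S 6]{fulton_intersection}), it automatically commutes with every $[f^!]_{vir}$ built this way, and no separate double-deformation argument is needed. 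One minor correction: for the global factorization of a general quasi-smooth $f$ you want Lemma~\ref{factorizeit} (plus the observation that the resulting closed embedding is quasi-smooth, from the cotangent triangle), not Lemma~\ref{qsorientable}, which only furnishes a local one.
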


\begin{lemma}
  Given $X, Y \in \dSch_{S}$, with $\widebar X = X$ and $\widebar Y = Y$, then
  for any $f: X \to Y$, $\dCh([f]) \cong A(X \xrightarrow{f} Y)$ (in
  the notation of \cite{fulton_intersection}).
\end{lemma}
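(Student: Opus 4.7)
The plan is to construct a natural restriction map $\phi: \dCh([f]) \to A(X \xrightarrow{f} Y)$ and show it is an isomorphism with inverse given by extending classical operational data to derived test schemes. For any derived scheme $V \to Y$, the adjunction $\iota_{V\ast}$ gives $\dCh(V) \cong \Ch(\widebar V)$. Since $t_0$ preserves fiber products and $X$, $Y$ are already discrete, the homotopy fiber product $W := V \times_Y X$ satisfies $\widebar W = \widebar V \times_Y X$ (the classical fiber product), so $\dCh(W) \cong \Ch(\widebar V \times_Y X)$. Restricting $\sigma \in \dCh([f])$ to discrete $V$'s thus produces a homomorphism $\sigma^V: \Ch(V) \to \Ch(V \times_Y X)$ of the type required by Fulton's bivariant theory.

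To verify that $\phi$ lands in $A(X \xrightarrow{f} Y)$, I need to check the three Fulton compatibilities. Compatibility with proper pushforward and flat pullback follow from Lemma \ref{pushforward_formula} (and its flat analogue), which identify derived and classical push/pull under the isomorphism $\dCh \cong \Ch$. For Gysin compatibility with a regular embedding $j: Z' \hookrightarrow Z$ of discrete schemes, note that $j$ is quasi-smooth and $\iota_{Z'}^\ast \mL_{Z'/Z}$ is the shifted conormal bundle. Unwinding formula (\ref{virtual_def}), the virtual Gysin map $j^!_{vir}$ reduces to the classical one: the discrete deformation to the normal cone together with specialization to the normal bundle. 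Thus the derived compatibility of $\sigma$ with $j^!_{vir}$ implies the classical compatibility of $\phi(\sigma)$ with Fulton's $j^!$.

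For surjectivity, given $\tau \in A(X \xrightarrow{f} Y)$, define $\psi(\tau)^V: \dCh(V) \to \dCh(W)$ via $\tau^{\widebar V}: \Ch(\widebar V) \to \Ch(\widebar V \times_Y X)$ under the identifications above. Compatibility with proper pushforward and flat pullback again reduces to the classical case through Lemma \ref{pushforward_formula}. For compatibility with virtual Gysin homomorphisms, Proposition \ref{gysin_commute} reduces the check to the single embedding $\{0\} \hookrightarrow \mA^1_k$, which is a regular embedding of discrete schemes whose virtual Gysin agrees with the classical one by the discussion above, so this compatibility follows from the classical Gysin compatibility of $\tau$. Hence $\psi(\tau) \in \dCh([f])$.

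Finally, $\phi$ and $\psi$ are visibly mutually inverse after restriction to discrete $V$, and injectivity for general derived $V$ follows from the compatibility of any operational class with the proper pushforward along $\iota_V: \widebar V \to V$, which forces $\sigma^V$ to be determined by $\sigma^{\widebar V}$ through the isomorphism $\iota_{V\ast}$. The one nontrivial point is the identification of the virtual Gysin for regular embeddings of discrete schemes with Fulton's classical Gysin; I expect this to be the main (though mild) obstacle, as it requires tracing through formula (\ref{virtual_def}) and observing that all derived enhancements degenerate to their classical counterparts under the hypothesis $\widebar X = X$, $\widebar Y = Y$.
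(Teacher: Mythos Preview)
Your proposal is correct and follows essentially the same approach as the paper: both construct mutually inverse maps by using $\iota_{V\ast}$ to identify $\dCh(V)\cong \Ch(\widebar V)$, reduce to discrete $V$ via compatibility with the proper pushforward along $\iota_V$, and invoke Proposition \ref{gysin_commute} to handle the Gysin compatibility. The paper is terser and does not spell out the comparison of $[j^!]_{vir}$ with Fulton's $j^!$ for regular embeddings of discrete schemes, but your added discussion of that point is exactly the check one needs to fill in.
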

\begin{proof}
Compatibility with pushforward shows we can assume that $V$ is discrete.  In this case, $\iota_{W \ast}$, being an isomorphism shows any $\sigma \in A(X \xrightarrow{f} Y)$ can be promoted to an element the equality $\widetilde \sigma \in \dCh([f])$.  This is done by $\widetilde \sigma^{V}_W = \iota_{W \ast} \sigma^{V}_{\widebar W}$.  Conversely, to any $\nu \in \dCh([f])$, $\bar{\nu}^{V}_{\widebar W} = \iota_{W\ast}^{-1} \nu^{V}_{W}$.  One checks that these assignments commute with proper pushforward and flat pullback.  One is left showing that they commute with Gysin homomorphisms, but this is obvious after Lemma \ref{gysin_commute}.  
\end{proof}

\begin{lemma}
\label{chowsameasfield}
  For any $f: X \to Y$, $\dCh([f]) \cong \dCh([t_0 f])$.
\end{lemma}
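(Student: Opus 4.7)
The plan is to construct mutually inverse maps $\Phi : \dCh([t_0 f]) \to \dCh([f])$ and $\Psi : \dCh([f]) \to \dCh([t_0 f])$, exploiting two structural facts: first, $\dCh(V) \cong \dCh(\widebar V)$ via $\iota_{V\ast}$ for every $V \in \dSch_B$, so the Chow group sees only the underlying discrete scheme; second, truncation commutes with derived fiber products.

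Given $\sigma \in \dCh([f])$ and a morphism $V' \to \widebar Y$, I view $V'$ as lying over $Y$ via $\iota_Y$, form $W_1 := V' \times_Y X$ and $W' := V' \times_{\widebar Y} \widebar X$, and observe that both have the same truncation $\widebar{V'} \times_{\widebar Y} \widebar X$. I then define
\[
(\Psi\sigma)^{V'}_{W'} := \iota_{W'\ast} \circ \iota_{W_1\ast}^{-1} \circ \sigma^{V'}_{W_1}.
\]
Dually, for $\tau \in \dCh([t_0 f])$ and $V \to Y$ with $W := V \times_Y X$, letting $\widetilde W := \widebar V \times_{\widebar Y} \widebar X$ (which shares the truncation $\widebar W$), I set
\[
(\Phi\tau)^V_W := \iota_{W\ast} \circ \iota_{\widetilde W\ast}^{-1} \circ \tau^{\widebar V}_{\widetilde W} \circ \iota_{V\ast}^{-1}.
\]

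Compatibility of $\Phi\tau$ and $\Psi\sigma$ with proper pushforward and flat pullback should follow directly from Lemma \ref{pushforward_formula} and Lemma \ref{base_change}, which express both operations entirely in terms of truncations, so the corresponding compatibility of the input class transfers through the $\iota_{(-)\ast}$ isomorphisms. For compatibility with virtual Gysin homomorphisms, Proposition \ref{gysin_commute} reduces the verification to the single case $\{0\} \hookrightarrow \mA_k^1$ between discrete schemes; since formula (\ref{virtual_def}) itself passes through truncations, this case is immediate.

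To show $\Phi\Psi = \mr{id}$, I unwind the definitions to obtain
\[
(\Phi\Psi\sigma)^V_W = \iota_{W\ast} \circ \iota_{W_2\ast}^{-1} \circ \sigma^{\widebar V}_{W_2} \circ \iota_{V\ast}^{-1}
\]
with $W_2 := \widebar V \times_Y X$, and then apply the compatibility of $\sigma$ with the proper morphism $\iota_V$ and its base change $\iota'_V : W_2 \to W$, together with the identity $\iota_W = \iota'_V \circ \iota_{W_2}$ forced by universality of truncation. The argument for $\Psi\Phi = \mr{id}$ is symmetric. The main obstacle I anticipate is the careful bookkeeping of the various derived fiber products $V \times_Y X$, $\widebar V \times_Y X$, $V' \times_Y X$, and $\widebar V \times_{\widebar Y} \widebar X$, which differ as derived schemes but share a common truncation; once the canonical isomorphisms among their Chow groups are tracked consistently, both the compatibility checks and the inverse relations become routine.
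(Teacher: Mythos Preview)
Your proposal is correct and follows essentially the same approach as the paper's proof: both exploit the isomorphisms $\iota_{(-)\ast}$ on Chow groups together with the fact that the various derived fiber products $V\times_Y X$, $\widebar V\times_Y X$, and $\widebar V\times_{\widebar Y}\widebar X$ share a common truncation, and both reduce the Gysin compatibility via Proposition~\ref{gysin_commute}. The only cosmetic difference is that the paper first invokes compatibility with proper pushforward to assume $V$ is discrete and then transfers along the single proper map $i:W\to W'$ between the two fiber products, whereas you write out the transfer through the common truncation explicitly via $\iota_{W\ast}\circ\iota_{\widetilde W\ast}^{-1}$ and handle general $V$ by pre- and post-composing with $\iota_{V\ast}^{-1}$; these amount to the same thing.
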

\begin{proof}
Given $\sigma \in \dCh([t_0(f)])$ and every $V \to \widebar Y$ we will use $\sigma^V_{V \times_{\widebar Y} \widebar X}$ to give a transformation $\widetilde \sigma^V_{V \times_Y X}$.  Using commutativity with proper pushforward, we can assume that $V$ is discrete.  One then obtains the diagram 

\begin{displaymath}
  \xymatrix{W \ar[rr]^{t_0(f)^\prime} \ar[dd]^{h^\prime} \ar[rd]^i & & V \ar[rd]^{id} \ar[dd]^(.7)h|\hole & \\ & W^\prime \ar[rr]^(.3){f^\prime} \ar[dd]^(.3){g^\prime}  & & V \ar[dd]^g \\ \widebar X \ar[rd]^{\iota_X} \ar[rr]^(.3){t_0(f)}|\hole &  & \widebar Y \ar[rd]^{\iota_Y} & \\ & X \ar[rr]^f & &  Y}
\end{displaymath}
All diagonal morphisms induce isomorphisms on $\dCh$.  Thus, define $\widetilde \sigma^V_{W^\prime}$ as $[i_\ast] \sigma^{V}_{W}$.  Likewise, for $\eta \in \dCh([f])$, we obtain $\widebar \eta^V_W \in \dCh([t_0(f)])$ via $[i_\ast]^{-1}\eta^
V_{W^\prime}$. It is straightforward to check that this operation defines a bivariant class (in both directions).  It clearly induces an isomorphism $\dCh([f]) \cong \dCh([t_0(f)]$. 
\end{proof}
This property, as examined above, is not clear for bivariant algebraic K-theory.  Following Lemma \ref{sameoverafield}, it is similar for structure morphisms, but remains true for all morphisms on operational theories since they are bootstrapped into a bivariant theory from the theory on the structure morphisms.  On the other hand, bivariant $K$-theory is not bootstrapped.   Clearly $\perf(f)$ and $\perf(t_0(f))$ are generally very far from equivalences, thus the question is how much of the ``algebraic'' difference in these categories is removed by taking their Grothendieck groups.

\subsubsection{The Todd class of a perfect complex}
\label{hot_toddy}
We briefly recall properties of the Todd class of a perfect complex.  Given a perfect complex on a discrete $V \in \dSch_{/B}$, we can assume that it is strict perfect, i.e., a bounded complex of vector bundles.  Define $td(\sP) = \prod td(\sP^i)^{(-1)^i} \in A^*(V)$.  The multiplicative nature of the Todd class of a vector bundles ensures this is well defined.  Given a perfect complex $\sP$ on a derived scheme $X$, we define $\Td(\sP) \in \dCh(id_X)$ as the unique operational class generated by the condition for any discrete $g: V \to X$, $\Td(\sP)^V_{V} = td(g^\ast \sP)$.  


\subsubsection{Verification of bivariant identities}

The verification of the bivariant identities are mostly trivial and (much like the discrete case) revolve around Lemma \ref{base_change}. 

\subsubsection{The orientation}

Let $\nu$ be the orientation on $\dCh$ defined by $\nu(f) = [f^!]_{vir}$ for $f: X \to Y$ quasi-smooth.  Similar to the case of $\theta$, this orientation is multiplicative.

\subsection{Operational derived Chow over $\mQ$}

Composing $\dCh$ with functor $Gr_{\mZ} \to Gr_{\mQ}$ obtained by tensoring with $\mQ$, we obtain a new bivariant theory.  We denote this theory as $\dCh_{\mQ}$.


\section{$dch$ and deformation to the normal cone}

\subsection{Derived local Chern character}
With $\dsch_{/B}$ as in the previous section, we now define the ``derived'' local Chern character. In the next section we will use it to build a morphism between bivariant K-theory and operational derived Chow with coefficients in $\mQ$.  

Let $f$ be a closed embedding. Then for any $\sF \in \perf(f)$, by Lemma \ref{globallyperfect} $f_\ast \sF \in \perf(Y)$ supported on $\widebar{X}$.  Given the homotopy fiber diagram  
\begin{displaymath}
  \xymatrix{W \ar[r] \ar[d] & V \ar[d]^g \\ X \ar[r]^f & Y}
\end{displaymath}
Define $dch^Y_X (\sF)^V_W: \dChQ(V) \to \dChQ(W)$ as $dch^Y_X(\sF)^{V}_{W} := \iota_{W \ast}(ch^{\widebar{V}}_{\widebar{W}} ((\iota_V \circ g)^\ast  f_\ast \sF) \cap \iota_{Y\ast}^{-1} - )$.  It is clear that $\iota_V^\ast g^\ast f_\ast \sF$ has as support $\widebar{W}$, thus the formula is well defined.

\begin{notation}
For clarity we sometimes include $f_\ast$ in the notation, i.e., $dch^Y_X(\sF)$ will be written $dch^Y_X(f_\ast \sF)$.
\end{notation}

\begin{remark}
This definition of the ``derived'' local Chern character is forced on any reasonable extension of the local Chern character: one can rephrase the derive local Chern character associated to a perfect complex as the unique operational class that acts as the underived local Chern character on discrete schemes.  
The main difference in its application to the Grothendieck-Riemann-Roch theory is that if $\sF \in \perf(f)$, then $f_\ast \sF$ is a perfect complex supported on $\widebar X$, but in most cases not the pushforward of a perfect complex on $\widebar X$. 
\end{remark}

\begin{proposition}
\label{invariant}
Let $f: X \to Y$ be a closed embedding and $\sF \in \perf(f)$.  Then $dch^Y_X(\sF) \in \dChQ(f)$.  Further, if $\sG \in \perf(f)$ is such that $[\sG] = [\sF]$, then $dch^Y_X(\sG) = dch^Y_X(\sF)$.  
\end{proposition}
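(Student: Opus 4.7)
The plan is to verify the three operational compatibilities (proper pushforward, flat pullback, virtual Gysin homomorphisms) and then the additivity needed to pass to the Grothendieck group. Throughout, the strategy is to reduce every required identity to a statement about the classical local Chern character on the underlying discrete schemes, using Lemma \ref{pushforward_formula} to funnel pushforward/pullback through the $\iota_\ast$ isomorphisms, and using Lemma \ref{everythingsperfect} so that base change is available to pass $f_\ast \sF$ across any fiber square.

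First I would unpack the definition. For any fiber square of $V \to Y$ pulled back to $W \to X$, we have $dch^Y_X(\sF)^V_W = \iota_{W\ast} \circ (ch^{\widebar V}_{\widebar W}((\iota_V \circ g)^\ast f_\ast \sF) \cap -) \circ \iota_{Y\ast}^{-1}$, so the construction is obtained from the classical bivariant class $ch^{\widebar V}_{\widebar W}(-)$ on discrete schemes by conjugation with the isomorphisms $\iota_{Y\ast}^{-1}$ and $\iota_{W\ast}$. Given a proper $h: V' \to V$ with fiber product $h': W' \to W$, Lemma \ref{pushforward_formula} writes $h_\ast$ (resp.\ $h'_\ast$) as $\iota^{-1}\, \bar h_\ast\, \iota_\ast$, so the required identity $h'_\ast \cdot dch^Y_X(\sF)^{V'}_{W'} = dch^Y_X(\sF)^V_W \cdot h_\ast$ reduces, after cancelling paired $\iota_\ast$, to the classical identity $\bar h'_\ast(ch^{\widebar{V'}}_{\widebar{W'}}(\bar g'^\ast (\iota_V)^\ast f_\ast\sF) \cap -) = ch^{\widebar V}_{\widebar W}(\bar g^\ast (\iota_V)^\ast f_\ast \sF) \cap \bar h_\ast(-)$; this is exactly the bivariant property of the underived local Chern character, combined with flat base change applied to the truncated square (which holds since all morphisms are perfect by Lemma \ref{everythingsperfect}, ensuring $(\iota_{V'} \circ g')^\ast f_\ast \sF \cong \bar h^{\prime\ast}(\iota_V \circ g)^\ast f_\ast \sF$). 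The same argument with flat pullback in place of proper pushforward handles the flat case verbatim, again relying on base change for $(\iota_V \circ g)^\ast f_\ast \sF$.

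For compatibility with virtual Gysin, Proposition \ref{gysin_commute} reduces the task to a single case: the quasi-smooth embedding $\{0\} \hookrightarrow \mA^1_k$. Here $\iota_X^\ast \mL_{X/Y}$ is an ordinary (shifted) trivial rank one sheaf, the normal cone deformation $\dcone[\{0\}][\mA^1]$ is the classical one, and the virtual Gysin homomorphism coincides with the usual Gysin pullback for the regular embedding $\{0\} \hookrightarrow \mA^1_k$ of smooth schemes. So the required commutation again descends, via the $\iota_\ast$ isomorphisms, to the classical fact that the local Chern character commutes with Gysin pullback along a regular embedding into a smooth scheme.

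Finally, for independence of the representative, it suffices to show $dch^Y_X$ is additive on cofiber sequences, since this forces it to factor through $K_0(\perf(f))$. Given a cofiber sequence $\sH \to \sG \to \sI$ in $\perf(f)$, applying the exact functor $f_\ast$ yields a cofiber sequence of perfect complexes on $Y$ with support in $\widebar X$, which remains a cofiber sequence after $(\iota_V \circ g)^\ast$; additivity of the classical local Chern character on cofiber sequences of perfect complexes then gives $dch^Y_X(\sG) = dch^Y_X(\sH) + dch^Y_X(\sI)$. The main technical obstacle will be the bookkeeping in the virtual Gysin step -- ensuring that when the fiber square is pulled back through deformation to the normal cone, the pullback of $f_\ast \sF$ is the correct sheaf to feed into $ch$ at each stage; once Proposition \ref{gysin_commute} is invoked, however, only the classical case $\{0\} \hookrightarrow \mA^1_k$ needs to be checked and this technicality largely evaporates.
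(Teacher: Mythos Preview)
Your proposal is correct and follows essentially the same route as the paper: reduce pushforward and flat pullback compatibilities to the classical bivariant properties of the local Chern character via the $\iota_\ast$ isomorphisms and Lemma~\ref{pushforward_formula}, invoke Proposition~\ref{gysin_commute} to collapse the virtual Gysin compatibility to the single discrete regular embedding $\{0\}\hookrightarrow\mA^1_k$, and then appeal to Fulton. Your treatment of the second statement via additivity on cofiber sequences is in fact a bit more explicit than the paper's one-line ``invariance under equivalence,'' but it is exactly the content needed to descend to $K_0(\perf(f))$.
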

\begin{proof}
To verify that it commutes with proper pushforward: let $h: V^\prime \to V$ be proper and $g: V \to Y$ and $\alpha \in \dChQ(V^\prime)$.  Let $\iota_{V^\prime\ast}\widebar \alpha = \alpha$.  Then we have the diagram with all squares Cartesian
\begin{displaymath}
  \xymatrix{ W^\prime \ar[r] \ar[d]^{h^\prime} & V^\prime \ar[d]^{h} \\ W \ar[r] \ar[d]^{g^{\prime}} & V \ar[d]^{g} \\ X \ar[r]^f & Y}
\end{displaymath}

\begin{align*}
  dch^Y_X(\sF) \cap h_\ast \alpha & = \iota_{W \ast}(ch^{\widebar{V}}_{\widebar{W}} ((\iota_V \circ g)^\ast  f_\ast \sF) \cap \widebar{h}_{\ast} \widebar \alpha) \\
& = \iota_{W\ast} (\widebar{h^\prime}_\ast(ch^{\widebar{V^\prime}}_{\widebar{W^\prime}} (t_0(h) \circ \iota_V \circ g)^\ast f_\ast \sF) \cap \widebar \alpha)\\
& =  \iota_{W \ast} (\widebar{h^\prime}_\ast(ch^{\widebar{V^\prime}}_{\widebar{W^\prime}} (\iota_{V^\prime}\circ h \circ g)^\ast f_\ast \sF) \cap \widebar \alpha)\\
& =  h^\prime_\ast (\iota_{W^\prime \ast}(ch^{\widebar{V^\prime}}_{\widebar{W^\prime}} (\iota_{V^\prime}\circ h \circ g)^\ast f_\ast \sF) \cap \widebar \alpha)\\
& =  h^\prime_\ast (dch^Y_X(\sF) \cap \alpha)
\end{align*}
The second line follows from \cite[Theorem 18.1]{fulton_intersection}, the others follow from the commutative squares the the adjunction morphisms $\iota_V$ provide.

For flat pullback, let $h: V^\prime \to V$ be flat. For $\widebar \alpha \in \dChQ(\widebar{V})$ with $\iota_{V \ast} \widebar \alpha = \alpha$, then $h^\ast(\alpha) = \iota_{V^\prime \ast} \widebar{h}^\ast \widebar \alpha$ (this follows from Lemma \ref{pushforward_formula}).  Thus,  
\begin{align*}
    dch^Y_X(\sF) \cap h^\ast \alpha & = \iota_{W^\prime \ast}(ch^{\widebar{V^\prime}}_{\widebar{W^\prime}} ((\iota_{V^\prime} \circ h \circ g)^\ast  f_\ast \sF) \cap \widebar{h}^\ast \widebar \alpha) \\
 &  = \iota_{W^\prime \ast}(\widebar{h^\prime}^\ast(ch^{\widebar{V}}_{\widebar{W}} ((\iota_{V} \circ g)^\ast  f_\ast \sF) \cap \widebar \alpha)) \\ & = h^\ast \iota_{W \ast} ch^{\widebar{V}}_{\widebar{W}} ((\iota_{V} \circ g)^\ast  f_\ast \sF) \cap \widebar \alpha) \\
& = h^\ast dch^Y_X \alpha
\end{align*}

We are left showing commutation with Gysin homomorphisms for quasi-smooth embeddings.  Applying Proposition \ref{gysin_commute}, we are reduced to the case that $Z = \mA^1$ and $Z^\prime = {0}$.  This follows from \cite{fulton_intersection} and our definition of $dch^{Y}_{X}(\sF)$ since $Z^\prime \to Z$ is now a regular embedding of (discrete) algebraic varieties.
The second statement of the theorem follows from the invariance of the local Chern character under equivalence.
\end{proof}

\subsection{Deformation to the normal cone}
\label{deformation_to_normal_cone}
\label{descent}

In this section, let $X \to Y$ be a quasi-smooth closed embedding of a derived scheme $X$ into a discrete $Y$.  Let $\dcone$ be the deformation to the normal cone and $C_{\widebar X} Y$ the normal cone of the embedding $\widebar X \to Y$. Both $\dcone$ and $C_{\widebar X} Y$ are discrete as derived schemes.  Let $p: C_{\widebar X} Y \to \widebar X$ and $\rho: \dcone \to Y \times \mP^1$ be the natural morphisms.  This gives rise to the  projection $\rho_1: \dcone \rightarrow Y$ and flat morphism $\rho_2: \dcone \to \mP^1$.  Recall that $C_{\widebar X} Y$ is a divisor of $\dcone$ with associated line bundle $\sO(1) := \rho_2^\ast \sO_{\mP^1}(1)$.  We denote $\rho_2^\ast \sO_{\mP^1}(n)$ as $\sO(n)$.  

The following sections detail the construction of a complex $\sO_{\widehat{X}}$ on $\dcone$  supported on $\widebar X \times \mP^1$ satisfying 
  \begin{enumerate}
  \item $\sO_{\widehat X} |_{\mP^1\backslash \{0\}} \cong \sO_{X \times \mP^1\backslash \{0\}}$
  \item $\sO_{\widetilde X} := \sO_{\widehat X} |_{0}$ is perfect and supported on $\widebar X$ (the zero section)
  \item $\sO_{\widetilde X} \cong \sO_{C_{\widebar X} Y} \otimes_{\sO_{\mV(\iota_X^\ast \mT_{X/Y})}} \sO_{\widebar {X}}$ where $\eta: C_{\widebar X} Y \to \mV(\iota_X^\ast \mT_{X/Y}[1])$ is the natural morphism induced by $X$.
  \end{enumerate}
The construction of $\sO_{\widehat X}$ proceeds via $\infty$-categorical $fppf$-descent.  We first show the local model and glue the local extensions to a global extension.  The notation $\sO_{\widehat X}$ is to suggest that one should think of this as the structure sheaf of a deformed derived scheme $\widehat X$.  However, we will not show nor need an algebra structure on $\sO_{\widehat X}$.

\subsubsection{The local case}
Recall the definition of $K(\sE; s)$ in \S \ref{derived_intro}.
\begin{proposition}
\label{extendit}
  Let $Y$ be an discrete affine scheme and $X \cong \spec K(\sE; s)$ for some locally free $\sE$ on $Y$.  Then there exists an affine scheme $\widehat X$ and morphism $\widehat X \to \dcone$ satisfying
  \begin{enumerate}
  \item $\widehat X |_{\mP^1\backslash \{0\}} \cong X \times \mP^1\backslash \{0\}$
  \item $\widehat X |_{0} \to C_{\widebar X} Y$ is a closed embedding supported on $\widebar X$ (the zero section)
  \item $\widehat X |_{0} \cong C_{\widebar X} Y \times^h_{\mV(\sE|_{\widebar X})} \widebar {X}$ where $\eta: C_{\widebar X} Y \to \mV(\sE)$ is the natural morphism induced by $s$.
  \item $\widehat X$ is well defined up to  equivalence of perfect complexes i.e. it does not depend on $\sE$.
  \end{enumerate}
\end{proposition}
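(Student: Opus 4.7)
The plan is to realize $\widehat{X}$ as the Koszul-type complex on $\dcone$ associated to a natural ``rescaled'' section of $\sE$.

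After shrinking $Y = \spec A$, assume $\sE \cong \sO_Y^n$ is trivial with $s$ given by a tuple $(s_1, \ldots, s_n) \in A^n$ generating the defining ideal $I \subset A$ of $\widebar{X}$. Let $R := A[t, It^{-1}] \subset A[t, t^{-1}]$ be the extended Rees algebra, so $\spec R$ is the affine chart of $\dcone$ over a neighborhood of $0 \in \mP^1$, with $R/tR \cong \operatorname{gr}_I A = \sO_{C_{\widebar X} Y}$ and $R[t^{-1}] \cong A[t, t^{-1}]$. The crucial observation is that although $t^{-1} \notin R$, each product $t^{-1} s_i$ lies in $It^{-1} \subset R$, so the tuple $(t^{-1}s_1, \ldots, t^{-1}s_n)$ defines a genuine section $\tilde s \colon R \to \sE \otimes_A R$. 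Set
\[
\widehat{B} := R \otimes^L_{\Sym(\sE^\vee) \otimes_A R} R,
\]
the derived tensor product taken with one algebra structure given by the zero augmentation and the other by $\tilde s^\vee$; equivalently, $\widehat{B} = K(\sE \otimes_A R;\, \tilde s)$. On the complementary chart of $\dcone$ (where $t$ is invertible and $\dcone \cong Y \times (\mA^1 \setminus \{0\})$), take $\widehat{X} = X \times (\mA^1 \setminus \{0\})$; these patch consistently by property (1) below.

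To verify (1)--(3), model $\widehat{B}$ via the Koszul complex $(R \otimes_A \Lambda^\bullet \sE^\vee,\, d_{\tilde s})$ with $d_{\tilde s}(e_i) = t^{-1} s_i$. Property (1) follows by inverting $t$: rescaling $\tilde s$ by the unit $t$ recovers the pulled-back section $s$, so $\widehat{B}[t^{-1}] \cong K(\sE; s) \otimes_A A[t,t^{-1}] \cong \sO_{X \times (\mA^1 \setminus \{0\})}$, matching the other chart. For (2), reducing $\tilde s$ mod $t$ yields the section of $\sE \otimes_A R/tR$ whose components are the classes $\bar s_i \in I/I^2 \subset \operatorname{gr}_I A$; since these generate the irrelevant ideal of $\operatorname{gr}_I A$, their common zero locus in $C_{\widebar X} Y$ is the zero section $\widebar X$. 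For (3), reducing $\widehat{B}$ mod $t$ gives the Koszul complex of $(\bar s_1, \ldots, \bar s_n)$ on $\sO_{C_{\widebar X} Y}$, which by definition equals
\[
\sO_{C_{\widebar X} Y} \otimes^L_{\Sym(\sE^\vee|_{\widebar X})} \sO_{\widebar X},
\]
where $\Sym(\sE^\vee|_{\widebar X})$ acts on $\sO_{C_{\widebar X} Y}$ via the map $\eta^\ast$ induced by the quotient $\sE^\vee|_{\widebar X} \twoheadrightarrow I/I^2$, and on $\sO_{\widebar X}$ by the zero augmentation; this is exactly the structure sheaf of $C_{\widebar X} Y \times^h_{\mV(\sE|_{\widebar X})} \widebar X$ as required.

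The main obstacle is (4). For a second presentation $X \cong \spec K(\sE', s')$, the argument of Proposition \ref{localnature} identifies both $\sE$ and $\sE'$ (after possibly further localizing on $Y$) with the locally free sheaf $\iota_X^\ast \mL_{X/Y}[-1]$ pulled back from $\widebar X$, with the sections matched up to the contractible space of lifts $\widetilde \mu$ appearing in that proof. Since the construction of $\widehat{B}$ is manifestly functorial in the pair $(\sE, s)$, this induces the required equivalence $\widehat{B} \cong \widehat{B}'$ of perfect $R$-modules. The subtlety is that the equivalence need not respect the simplicial algebra structure on $\widehat{B}$; this is precisely why the proposition only asserts uniqueness at the level of perfect complexes, which is exactly the level of rigidity required for the $fppf$-descent gluing carried out in the next subsection to produce a global $\sO_{\widehat{X}}$ on $\dcone$.
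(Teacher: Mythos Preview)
Your construction for (1)--(3) is correct and is precisely the paper's construction written in Rees-algebra coordinates: the paper twists $\rho_1^\ast\sE$ by $\sO(1)$ and uses the section $s'$ obtained by factoring $\rho_1^\ast s^\vee$ through the ideal sheaf $\sO(-1)$ of the special fiber; on the chart $\spec R$ this is literally your section $\tilde s=t^{-1}s$, since $\sO(-1)|_{\spec R}=tR$ and each $s_i\in I\subset tR$.

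The gap is in (4). Your argument rests on the claim that Proposition~\ref{localnature} ``identifies both $\sE$ and $\sE'$ with $\iota_X^\ast\mL_{X/Y}[-1]$ pulled back from $\widebar X$''. But there is no map $Y\to\widebar X$ along which to pull back, and Proposition~\ref{localnature} only \emph{produces} one Koszul presentation of a given quasi-smooth embedding; it says nothing about comparing two given presentations $(\sE,s)$ and $(\sE',s')$. What is true is that $\sE|_{\widebar X}\cong\iota_X^\ast\mL_{X/Y}[-1]\cong\sE'|_{\widebar X}$, but an isomorphism over $\widebar X$ need not lift to an isomorphism of pairs over $Y$, and that lift is exactly what ``functoriality in $(\sE,s)$'' would require.

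The paper's argument for (4) is genuinely different and does not attempt to compare the pairs at all. It takes the actual simplicial-module equivalence $\phi:\sO_{X_\sE}\to\sO_{X_\sG}$ (available because both are fibrant--cofibrant models of $\sO_X$ over affine $Y$) and lifts it level by level, using the structural fact that $(\sO_{\widehat X_\sE})_i=\rho_1^\ast(\sO_{X_\sE})_i\otimes\sO(i)$. One sets $\psi_i:=(\rho_1^\ast\phi)_i\otimes\mathrm{id}_{\sO(i)}$ and verifies compatibility with face and degeneracy maps, the check hinging on the injectivity of the inclusions $k_i:\sO(i)\hookrightarrow\sO(i+1)$. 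Crucially, the same mechanism lifts arbitrary maps $\sO_{X_\sE}\otimes\Delta_n\to\sO_{X_\sG}$, i.e.\ higher homotopies between equivalences; this strengthening is exactly what the descent argument in \S\ref{descent} consumes, and it is not something a functoriality-in-pairs argument would deliver even if the identification of pairs could be justified.
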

\begin{proof}
We will refer to $K(\sE; s)$ as $\sO_{X_{\sE}}$.  With this terminology, $\sO_X \cong \sO_{X_{\sE}}$.  Let $\sF := \rho_1^\ast \sE$, then $\sF^\vee \cong \rho^\ast \sE^\vee$.  Let $\sK \subset \sO_{\dcone}$ be the sheaf of ideals defining $\widebar X \times \mP^1 \subset \dcone$. By definition of $s$, it is clear that for $\sF^\vee \xrightarrow{\rho_1^\ast s^{\vee}} \sO_{\dcone}$ has $\image \rho_1^\ast s^{\vee} \cong \sK \otimes \sO(-1) \subset \sO_{\dcone}$.   This morphism factors 
\begin{displaymath}
  \xymatrix{& \sF^\vee \ar[rd]^{\rho_1^\ast(s)^\vee} \ar[ld] & \\ \s{O}(-1) \ar[rr]^{\subset} & & \sO_{\dcone}} 
\end{displaymath}
and there exists a natural section of $s^\prime: \sO_{\dcone} \to \sF^\vee \otimes_{\sO_{\dcone}} \sO(1)$.  Define $\widehat X_{\sE} := \spec K(\sF \otimes_{\sO_{\dcone}} \sO(1); s^\prime)$.  It is clear that $\sO_{X_{\sE}} = K(\sF \otimes_{\sO_{\dcone}} \sO(1); s^\prime)$.

For the first property, note that $\mP^1 \backslash \{0\}$ is a trivializing Zariski open neighborhood for $\sO(1)$. Thus, $(\sF \otimes_{\sO_{\dcone}} \sJ)|_{\mP^1 \backslash \{ 0 \}} \cong \sF|_{\mP^1 \backslash \{ 0 \}}$ and  $s^\prime$ differs from $s$ by a unit.  This is enough to show  $\widehat{X}|_{\mP^1 \backslash \{0 \}} \cong \mA^1_{X}$.  
The second property is satisfied since $\image (s^\prime)^\vee = \sK$ by definition.  In other words, $\widehat X$ is supported on $\mP^1_{\widebar X}$.  This will continue to hold after restriction to the special fiber.

For the third property, let $i_0: C_{\widebar X} Y \to \dcone$ and $\sI \subset \sO_Y$ be the sheaf of ideals corresponding to $\widebar X$.  Then $C_{\widebar X} Y \times_{\dcone} \widehat X \cong \spec K(i_0^\ast(\sF \otimes_{\sO_{\dcone}} \sO(1)); i_0^\ast s^\prime)$; this follows from the fact $Y \times_X - $ preserves fiber products.  Let $\sV := \widebar{f}^\ast \sE$ ( i.e., $\sE|_{\widebar X}$) and $\pi: \mV(\sV) \to \widebar X$ be the natural projection.   We have the commutative diagram
\begin{displaymath}
  \xymatrix{ & \widebar X \ar[rd] \ar[ld] & \\ C_{\widebar X} Y \ar[rr]^{\eta} & & \mV(\sV)}
\end{displaymath}
where the lower morphism is induced by restricting the morphism $s^\vee:  \sE^\vee \to \sI \subset \sO_Y$ to $\widebar X$.  It is clear $C_{\widebar X} Y \times_{\mV(\sV)} \widebar X \cong \spec K(\eta^\ast \pi^\ast \sV; \eta^\ast r)$ where $r: \sO_{\mV(\sV)} \to \pi^\ast \sV$ is the identity section.  It is an easy calculation that $i_0^\ast (\sF \otimes_{\sO_{\dcone}} \sJ) \cong i_0^\ast \rho_1^\ast \sE \cong \eta^\ast \pi^\ast \sV$.  To show the isomorphism, it is enough to compare the two sections  $\eta^\ast r$ and $i^\ast_{0} s^\prime$.  Explicit computation shows that both $(\eta^\ast r)^\vee$ and $(i_0^\ast s^\prime)^\vee$ are the morphism which on the $n$th graded summand is the composite of the morphisms
\begin{displaymath}
\sE^\vee|_{\widebar X}\otimes_{\sO_{\widebar X}} \sI^n/\sI^{n+1} \xrightarrow{s\otimes id} \sI/\sI^2 \otimes \sI^n/\sI^{n+1} \rightarrow \s{I}^{n+1}/\s{I}^{n+2}.
\end{displaymath}
In other words, by construction, the section $s$ canonically yields the isomorphism.



We now show that $\sO_{\widehat X_{\sE}} \cong \sO_{\widehat X_{\sF}}$  for $\sO_{X_{\sE}} \cong \sO_{X_{\sF}}$, where $\sE$ and $\sF$ are locally free sheaves on $Y$.  Let $k_i: \sO(i) \to \sO(i +1)$ be the natural monomorphism and let $d_i^j$ denote the simplical edge morphism corresponding to $[j-1] \hookrightarrow [j]$,  missing $i$. With this notation in place, as simplicial $\sO_{\widebar X} Y$-modules, $(\sO_{\widehat X_{\sE}})_i = \rho^\ast (\sO_{X_{\sE}})_i \otimes_{\sO_{\dcone}} \sO(i)$ and by construction $(id_{X_{\sE}} \otimes k_i) \circ (d_i^j)_{\sO_{\widehat X_{\sE}}} = (d_i^j)_{\rho^\ast \sO_{X_{\sE}} \otimes \sO(i)}$.

Suppose $X \cong X_{\sG}$.  Since $X$ is quasi-isomorphic to both, $\sO_{X_{\sE}} \cong \sO_{X_{\sG}}$. Our assumption of $Y$ being affine means  $\sO_{X_{\sG}}$ and $\sO_{X_{\sE}}$ are fibrant-cofibrant (as simplicial $\sO_Y$-modules and algebras), thus this quasi-isomorphism is an morphism  $\phi: \sO_{X_{\sE}} \to \sO_{X_{\sG}}$ of simplicial algebras.  The cofibrant-fibrant nature of these modules implies $L\rho_1^\ast$ is underived when applied to them and thus, $\rho_1^\ast \phi: \rho_1^\ast \sO_{X_{\sE}} \to \rho_1^\ast\sO_{X_{\sG}}$.  We will show that $\phi$ can be naturally promoted to a morphism $\psi: \sO_{\widehat X_{\sE}} \to \sO_{\widehat X_{\sF}}$ (in any of the various model structures on $\QC(\dcone)$).  Using the above description of $(\sO_{\widehat X_{\sE}})_i$, let $\psi_i := (\rho^\ast \phi)_i \otimes id_{\sO(i)}$.   We must show $\psi$ commutes with the face and degeneracy maps.  

  Suppose $(d_i^j)_{\sO_{\widehat X_{\sG}}} \circ \psi_j - \psi_{j-1} \circ (d_i^j)_{\sO_{\widehat X_{\sE}}} \neq 0$ as a morphism $(\rho^\ast \sO_{X_{\sE}})_j \otimes \sO(j) \to (\rho^\ast \sO_{X_{\sG}})_{j -1}\otimes \sO(j-1)$.   Composing with $id \otimes k_{j -1}$, we have 
\begin{align*}
  id\otimes k_{j-1} \circ ((d_i^j)_{\sO_{\widehat X_{\sG}}} \circ \psi_j -
  \psi_{j-1} \circ (d_i^j)_{\sO_{\widehat X_{\sE}}}) & = id \otimes k_{j-1} \circ ((d_i^j)_{\sO_{\widehat X_{\sG}}} \circ \psi_j) - id \otimes k_j \circ \psi_{j-1} \circ (d_j^i)_{\sO_{\widehat X_{\sE}}}
\end{align*}
The equality  $(id \otimes k_{j-1}) \circ (\rho^\ast \phi_{j-1} \otimes id_{\sO(j-1)}) = \rho^\ast \phi_{j-1} \otimes id_{\sO(j)} \circ (id \otimes k_{j-1})$ and remarks above show this is equal to 
\begin{align*}
 (d_i^j)_{\rho^\ast \sO_{X_{\sE}} \otimes \sO(j)} \circ (\rho^\ast \phi_j \otimes id_{\sO(j)}) -  (\rho^\ast\phi_j \otimes id_{\sO(j)}) \circ (d_i^j)_{\rho^\ast \sO_{\widehat X_{\sE}}\otimes \sO(j)} 
\end{align*}
Yielding,
\begin{align*}
id\otimes k_{j-1} \circ ((d_i^j)_{\sO_{\widehat X_{\sG}}} \circ \psi_j -
  \psi_{j-1} \circ (d_j^i)_{\sO_{\widehat X_{\sE}}}) & = (\rho^\ast  ((d^j_i)_{\sO_{X_{\sG}}} \circ \phi_{j} - \phi_{i-1} \circ (d_i^j)_{\sO_{X_{\sE}}})) \otimes id_{\sO(j)} \\&= 0
\end{align*}
Since all terms are locally free and $k_{j-1}$ is a monomorphism, this is enough to give a contradiction.  Similar arguments give the case for degeneracy maps. 
It is clear that these arguments restrict to the natural affine open subsets $\dcone \backslash \rho_2^\ast(\infty)$ and $\dcone \backslash \rho_2^\ast(0)$.

It will be important in the next section to show that if $h: \sO_{X_{\sE}}\otimes \Delta_n \to \sO_{X_{\sG}}$ with $X_{\sE} \cong X_{\sG} \cong X$, then there exists a canonical lift $h^\prime: \sO_{\widehat X_{\sE}}\otimes \Delta_n \to \sO_{\widehat X_{\sF}}$.  The argument proceeds exactly as above.
  
\end{proof}

\subsubsection{Local to global}
For this section we assume $X \in \dSch_{/\spec k}$ where $k$ is a field.
Since $Y$ is quasi-projective, we can 1) assume $\sO_X$ is a bounded complex of vector bundles exact off of $\widebar X$,  2) one can find a basis $U_\alpha$ of affine Zariski open subschemes such that finite intersections are again affine.  Let $\{U_a\}_{a \in I}$ be an affine Zariski covering of $Y$ such $U_a = U_\alpha$ for some $\alpha$ and if $V_a = U_a \times^h_Y X$, then $V_a \cong \spec K(\sE_a; s_a)$; Proposition \ref{localnature} makes this possible.  By choosing $U_a$ appropriately, we can assume $\sO_{V_a}$ is a fibrant-cofibrant object of $\sO_{U_a}$-mod.  

For the following, let $U_0 := \coprod_{a \in I} U_a$ and $U_n := \overbrace{U_0 \times_Y \ldots \times_Y U_0}^{\textrm{$n+1$ times}}$ and $d^i_j: U_j  \to U_{j -1}$ be the $i$th face map (in the resulting simplicial object).   If we set $\widehat U_a := U_a \times_Y \dcone$ then $\widehat U_a = \widehat U_{a, 0} \cup \widehat U_{a, \infty} $ where $\widehat U_{a, \infty}$ is the complement of the principal divisor $\rho_{2}^\ast \{\infty\}$ (and similarly for $\widehat U_{a, 0}$). Thus, we can refine the Zariski cover $\coprod \widehat U_a$ of $\dcone$ to an Zariski affine cover $\coprod_{a \in I, j \in \{0, \infty\}} \widehat U_{a, j}$. 
The resulting simplicial object $\widehat U_\ast$ satisfies the same properties as $U_\ast$. 

It is well known that there exists a $fppf$-stack $\sM_{\perf(k)}$:
\begin{gather*}
  \sM_{\perf}: dSt_{k} \to \sS \\ \sM_{\perf}(X) = \sM_{\perf(X)}
\end{gather*}
where $\sM_{\perf(X)}$ is the $\infty$-groupoid (Kan complex) of objects of $\perf(X)$ and $\sS$ is the $\infty$-category of Kan complexes. For example, one can create such an object by Kan extending the functor
\begin{gather*}
  \sM_{\perf}: dAff_{k} \to \sS \\ \sM_{\perf}(\spec A) = N(\perf(\spec A)^{cf})
\end{gather*}
where the latter is the nerve of the full subcategory of the simplicial model category $\QC(\spec A)$ (with the projective model structure) consisting of fibrant-cofibrant perfect objects.   Applying $\sM_{\perf}$  to the simplicial affine scheme $U_{\ast}$ results in a cosimplicial $\infty$-groupoid.  Since $U_\ast$ is a $fppf$-hypercover of $Y$, this implies
\begin{displaymath}
  \sM_{\perf(Y)} \cong \holimproj\; N(\perf(U_*)^{cf})
\end{displaymath}
We will give explicit morphism of cosimplicial sets $\Delta \to \sM_{\perf(U_\ast)}$.  We then use this morphism and Proposition \ref{extendit} to construct a morphism $\Delta \to \sM_{\perf((\widehat U)_\ast)}$.  Descent then dictates that such a morphism corresponds to a perfect complex on $\dcone$.  To construct our morphism we use the adjunction pair $(N, \mathfrak{C})$ of \cite[Chapter 1]{topoi}.
  
Since $\sO_X$ is a bounded complex of vector bundles it canonically determines a $0$-simplex in each $\sM_{\perf(U_n)}$ (without strictness there is a choice of isomorphism involved). We denote the $0$-simplex in $\sM_{\perf(U_0)}$ determined by $\sO_X$ and $\coprod K(\sE_a; s_a)$ by $g_0$ and $h_0$, respectively.  The choice of isomorphism $K(\sE_a; s) \cong \sO_X|_{U_a}$ gives the following data
\begin{align*}
g_: \mathfrak{C} \Delta_1 &\to \perf(U_0)\\
g|_{\mathfrak{C}\Delta_1^\circ} & = (\coprod K(\sE_a; s_a), \sO_X|_{U_0}) 
\end{align*}
(This is equivalent to a point in $\Map_{\perf(U_0)}(\coprod K(\sE_a; s_a), \sO_X|_{U_0})$).  Since we are working with fibrant-cofibrant objects, $g$ has a homotopy inverse $\widetilde g$ with homotopy 
\begin{displaymath}
G: \Delta_1 \to \Map_{\perf(U_0)}(\sO_X|_{U_0}, \sO_X|_{U_0})
\end{displaymath}  
We now define $h_i: \Delta_i \to \sM_{\perf(U_i)}$ as $Ng_n$ where $g_n: \mathfrak{C}\Delta_n \to \perf(U_n)$.  Let
\begin{gather*}
  G_0: \Delta_0 \to \Map_{\perf(U_1)}(d_0^{1 \ast} g_0, d_1^{1 \ast} g_0)\\
 G_0 := d_0^{1 \ast}(\widetilde g) \circ d_0^{0 \ast}(g)\\
  g_1(j) = d_j^{1 \ast} g_0\\
  g_1(\Map_{\mathfrak{C}\Delta_1}(0, 1)) = G_0
\end{gather*}
To define $g_n$, $n > 1$, we first introduce additional notation.  The zero simplices of $\Delta_n$ correspond to morphisms $[0]\to [n]$.  To $j \in [n]$ there is a unique composite $\partial_{j,n}^0: d^n_{j_n}d^{n-1}_{j_{n-1}} \ldots d^0_{j_0}$ satisfying $n \geq j_n \geq j_{n-1} \geq \ldots \geq j_0 \geq 0$ (this corresponds to $[0] \to [n]$ with image $j$).  More generally, let $\partial^i_{j, n}$ be the simplicial morphism correspond to the linear embedding of $[i] \to [n]$ that sends $0 \to j, 1 \to j+1$, etc.  Explicitly, $\partial^i_{j,n}: d_n^n\circ d^{n-1}_{n-1} d_{i+j+1}^{i+j+1}d_{0}^{i+j}\ldots d^{i+1}_0 d^i_0$

Set
\begin{gather*}
  G_1: \Delta_1 \to \Map(\partial^0_{0, 2}, \partial^0_{2,2})\\
  G_1 :=   \partial^0_{2,2}\widetilde g \circ G \circ \partial^0_{0,2} g
\end{gather*}
It is clear that $G_1$ is a homotopy from $(d^1_2)^\ast h_1 \circ d(^1_0)^\ast h_1$ to $(d^1_1)^\ast h_1$.  In other words, in the homotopy category, it ensures the standard cocycle condition is satisfied. Let
\begin{gather*}
G_i:  \times^{i-1} \Delta_1 \to \Map_{\perf(U_n)}((\partial^0_{0, i})^\ast(\coprod K(\sE_a; sa)), (\partial^0_{i, i})^\ast \coprod K(\sE_a; sa)) \\
  G_i := (\partial^1_{i-1, i})^\ast G_0 \circ \ldots \circ (\partial^1_{2,i})^\ast G_0 \circ (\partial^2_{0, i})^\ast G_1 \times (\partial^1_{i-1, i})^\ast G_1 \circ \ldots \\ \ldots \circ (\partial^1_{3,i})^\ast G_0 \circ (\partial^2_{1, i})^\ast G_1 \circ (\partial^1_{0, i})^\ast G_0 \times \ldots  \times (\partial^2_{i-2, i})^\ast G_1 \circ  (\partial^1_{i-3, i})^\ast G_0 \circ \ldots \circ (\partial^1_{0,i})^\ast G_0
\end{gather*}

The objects of $\mathfrak{C}\Delta_n$ correspond $0$-simplices of $\Delta_n$ and $\Map_{\mathfrak{C}\Delta_n}(j, j + k) = \times^{k-1} \Delta_1$.  Define $g_n$ as 
\begin{gather*}
  g_n(j) = (\partial^0_{j, n})^\ast(\coprod K(\sE_a; s_a))\\
  g_n(\Map_{\mathfrak{C}\Delta_n}(j, j + k)) = (\partial^k_{j, n})^\ast G_k 
\end{gather*}
It is important to note that we have only repackaged the data naturally given by $\sO_X$ into a more desirable local description. 

 We now give a morphism $\widehat h: \Delta \to \sM_{\perf(\widehat U_*)}$.  Using Proposition \ref{extendit}, there exists  
\begin{gather*}
  \widehat G_i: \times^{i-1} \Delta_1 \to \Map_{\perf(\widehat U_n)}((\partial^0_{0, i})^\ast(\coprod \sO_{\widehat X_{\sE_a}}), (\partial^0_{i, i})^\ast(\coprod \sO_{\widehat X_{\sE_a}}))
\end{gather*}
agreeing with $\rho_1^\ast G_i$ outside of the special fiber.  
fine $\widehat g_n: \mathfrak{C}\Delta_n \to \perf(\widehat U_n)$ as 
\begin{gather*}
  g_n(j) = (\partial^0_{j, n})^\ast(\coprod \sO_{\widehat X_{\sE_a}})\\
  g_n(\Map_{\mathfrak{C}\Delta_n}(j, j + k)) = (\partial^k_{j, n})^\ast \widehat G_k
\end{gather*}
Last, let $(\widehat h)_n = N \widehat g_n$.  From the construction in Proposition \ref{extendit}, it is clear that this will retain the same identifies as  $h$ (i.e. $(\widehat h)_i$ fills in the ``hole'' induced by restricting $(\widehat h)_{i-1}$).  Thus, by descent, there exists an $\sO_{\widehat X} \in \perf(\dcone)$ that clearly satisfies conditions (1) and (2) from \S~\ref{deformation_to_normal_cone}. 

To prove that it satisfies the third condition, first note that the morphism $\iota_X^\ast \mL_{X/Y} \to \sI/\sI^2$ is induced by taking $\pi_1$ in the following distinguished triangle 
\begin{displaymath}
\xymatrix{  \iota_X^\ast \mL_{X/Y} \ar[rr] & & \mL_{\widebar X/Y} \ar[ld] \\ & \ar@{-->}[lu]^{[1]} \mL_{\widebar X/X}}
\end{displaymath}
This is locally given by a choice of vector bundle $\sE$ on $Y$ and a morphism $\sE^\vee \to \sI$, (i.e., a section of $\sE$).  Although different choices of vector bundles on $Y$ may not be isomorphic, their restriction to $\widebar X$ are isomorphic.  This in turn induces the embedding $C_{\widebar X} Y \hookrightarrow \mV(\iota_X^\ast \mT_{X/Y}[1])$.
Since $\widebar X \to \mV(\iota_X^\ast \mT_{X/Y}[1])$ is a regular embedding, quasi-projectivity ensures we can globally resolve $\sO_{\widebar X}$ using the Bar resolution.  This then gives an explicit representative for $\sO_{C_{\widebar X} Y} \otimes_{\sO_{\mV(\iota_X^\ast \mT_{X/Y}[1])}} \sO_{\widebar X}$.  The result is now clear since locally $\sO_{\widehat X}$ and $\sO_{C_{\widebar X} Y} \otimes_{\sO_{\mV(\iota_X^\ast \mT_{X/Y}[1])}} \sO_{\widebar X}$ are equal and their patching data are identical, i.e., the data is identical when restricting to the cosimplicial space $\sM_{\perf(V_n)}$ (the cosimplicial $\infty$-groupoid of objects), where $V_n = U_n \times_Y \widebar X$.  Descent then dictates that $\sO_{\widehat X|_{0}} \cong \sO_{C_{\widebar X} Y} \otimes_{\iota_X^\ast \mL_{X/Y}} \sO_{\widebar X}$.



\begin{remark}
  Although we have only shown that there exists an extension as a perfect complex, it is possible to show it retains an algebra structure as well.  This allows the rephrasing of the above conditions as an
  embedding $\widehat X \to \dcone$ satisfying
  \begin{enumerate}
    \label{properties}
  \item $\widehat X |_{\mA^1\backslash \{0\}} \cong X \times
    \mA^1\backslash \{0\}$
  \item $\widetilde X := \widehat X |_{0} \to C_{\widebar X} Y$ is a
    closed embedding supported on $\widebar X$ (the zero section)
  \item $\widetilde X \cong C_{\widebar X} Y \times^h_{\mV(\iota_X
      \mL_{X/Y}[1])} \widebar {X}$ where $C_{\widebar X} Y \to
    \mV(\iota_X \mL_{X/Y})$ is the natural morphism induced by $X$.
  \item $\widehat X \to \dcone$ is quasi-smooth embedding
    and $\sO_{\widehat X}$ is perfect over $\dcone$.
  \end{enumerate}
  We have not added this layer due to increased complexity in the descent argument.  We will only need the perfect complex.
\end{remark}

\begin{remark}
  Although we only provide a deformed perfect complex for $\sO_X$, it is our belief that there exists a generalized construction that allows one to deform any $\sF \in \perf(f)$ for any closed embedding $f$.  This type of construction would be useful for understanding ``higher'' obstruction theory.
\end{remark}

\subsection{$dch^Y_X(\sO_X)$ for  quasi-smooth embeddings.}

In the case that $f$ is a quasi-smooth embedding, the following equality of operational classes is integral to our comparison of orientations and invariance of $\tau$ done in later sections.
\begin{proposition}
\label{dch_explicit}
  Let $f: X \to Y$ be a  quasi-smooth embedding.  Then $dch_X^Y(\sO_X) = \Td(\mT_{X/Y}) \cap  [f^!]_{vir}$.
\end{proposition}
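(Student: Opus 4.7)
The plan is to deform $\sO_X$ to the special fiber of the family $\sO_{\widehat X}$ on $\dcone$ constructed in \S\ref{descent}, where $f$ degenerates to (the normal cone sitting inside) the obstruction bundle $\mV(\iota_X^\ast \mL_{X/Y})$, and then compute both sides by a classical Koszul calculation. Since both sides are operational classes in $\dChQ([f])$, it suffices to evaluate them on an arbitrary cycle $\alpha \in \dCh(Y)$.

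First I would use $\sO_{\widehat X}$ to interpolate the two local Chern characters that enter. By property~(1) of \S\ref{deformation_to_normal_cone} and the equivalence invariance in Proposition~\ref{invariant}, the class $dch^{\dcone}_{\widehat X}(\sO_{\widehat X})$ restricts over $\rho_2^{-1}(\mP^1 \setminus \{0\})$ to the pullback of $dch^Y_X(\sO_X)$, while its specialization across the principal divisor $\{0\} \subset \mP^1$ is, by property~(2), equal to $dch^{M}_{\widetilde X}(\sO_{\widetilde X})$, where $M$ denotes the special fiber. Combining this with $\mA^1$-invariance and the localization sequence for operational Chow identifies $dch^Y_X(\sO_X)(\alpha)$ with $dch^{M}_{\widetilde X}(\sO_{\widetilde X})$ applied to the Chow specialization of $\alpha$ to $C_{\widebar X} \widebar Y$---exactly the composite of the first two arrows of $[f^!]_{vir}$ in equation~(\ref{virtual_def}).

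Next I would compute $dch^{M}_{\widetilde X}(\sO_{\widetilde X})$ explicitly. Property~(3) identifies $\sO_{\widetilde X}$ with $\sO_{C_{\widebar X} Y} \otimes^L_{\sO_{\mV(\iota_X^\ast \mT_{X/Y}[1])}} \sO_{\widebar X}$, so after pushing forward along the closed embedding $C_{\widebar X} Y \hookrightarrow \mV(\iota_X^\ast \mT_{X/Y}[1])$ and applying the projection formula, the local Chern character reduces to that of the classical Koszul resolution of the zero section of the (discrete) vector bundle $\pi : \mV(\iota_X^\ast \mT_{X/Y}[1]) \to \widebar X$. The splitting-principle identity $\sum (-1)^i \mr{ch}(\Lambda^i E^\vee) = c_{\mr{top}}(E) \cdot \Td(E)^{-1}$, together with the localized calculation in \cite[Corollary 18.1.2]{fulton_intersection}, then yields $\Td(\iota_X^\ast \mT_{X/Y}) \cap \pi^{\ast -1}$; composition with the remaining pushforwards reassembles the last three arrows of $[f^!]_{vir}$, so combining the two halves gives the stated identity.

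The main obstacle will be the $\mA^1$-interpolation in the first step. The complex $\sO_{\widehat X}$ was constructed in \S\ref{descent} only as a perfect complex, well-defined up to equivalence, rather than as the structure sheaf of a genuine derived scheme, so one must verify that its derived local Chern character really commutes with flat restriction to $\rho_2^{-1}(\mP^1 \setminus \{0\})$ and with Chow specialization across $\{0\}$. This reduces to combining Proposition~\ref{invariant}'s invariance under equivalence with the compatibility of $dch$ with flat pullback and with the Chow localization sequence; once this is in place, the Koszul bookkeeping on the discrete vector bundle $\mV(\iota_X^\ast \mT_{X/Y}[1])$ is routine.
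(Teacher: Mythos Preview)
Your outline matches the paper's proof closely: deform via $\sO_{\widehat X}$, specialize across the principal divisor using $\mA^1$-invariance, then push into the vector bundle $\mV(\iota_X^\ast \mT_{X/Y}[1])$ and invoke \cite[Corollary~18.1.2]{fulton_intersection}. One genuine step is missing, however: the construction of $\sO_{\widehat X}$ in \S\ref{descent} is carried out only for $Y$ \emph{discrete} (the deformation space $\dcone$ is the classical one for $\widebar X \hookrightarrow Y$), so before you can invoke it you must reduce to that case. The paper does this as follows. Since both sides are operational and commute with proper pushforward, it suffices to test on $[V]$ for an arbitrary discrete $V \to Y$ --- not merely on $\alpha \in \dCh(Y)$ as you write. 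One then factors $V \to \widebar Y \xrightarrow{\iota_Y} Y$ and uses base change along $\iota_Y$ to replace $f:X\to Y$ by its pullback $Z := X \times_Y \widebar Y \to \widebar Y$, which is again a quasi-smooth embedding with $\mL_{Z/\widebar Y} \cong h^\ast \mL_{X/Y}$; both $dch$ and $\Td \cap [f^!]_{vir}$ are unchanged under this replacement.

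Your worry about the $\mA^1$-interpolation is dispatched more cleanly than you suggest. Once $dch^{\dcone}_{\mP^1_{\widebar X}}(\sO_{\widehat X})$ is known to lie in $\dChQ$ (Proposition~\ref{invariant}), it automatically commutes with the Gysin maps $i_0^!$ and $i_1^!$ for the quasi-smooth inclusions of the fibers of $\rho_2$. Applying these to the class $[M^\circ_{\widebar W} V]$ (for the test scheme $V$) and using $i_0^! = i_1^!$ by rational equivalence of the divisors yields the comparison directly; no separate argument with open restriction or the localization sequence is required.
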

\begin{proof} Commutativity by both sides with proper pushforward shows it is enough to test the equality on the class $[V]$ with $V \to Y$ discrete. We first reduce to the case that $Y$ is discrete.  Let $W$ fit into the homotopy fiber diagram
\begin{displaymath}
  \xymatrix{ W \ar[r] \ar[d] & V \ar[d]^{j} \\ X \ar[r]^f & Y}
\end{displaymath}
Then $j$ factors as $\iota_Y \circ \widebar j$ with $\widebar j: V \to \widebar Y$. This yields the homotopy Cartesian diagram 
 \begin{displaymath}
   \xymatrix{W \ar[r] \ar[d]^{k} &  V  \ar[d]^{\widebar j} \\ Z \ar[r]^g \ar[d]^h & \widebar Y \ar[d]^{\iota_Y} \\ X \ar[r]^f & Y}
 \end{displaymath}
Since the lower horizontal morphism is affine, it is perfect and we can apply base change.
\begin{align*}
  dch^Y_X(\sO_X) \cap [V] &= \iota_{W \ast}(ch^V_{\widebar W}( j^\ast f_\ast \sO_X) \cap [V]) \\
  & = \iota_{W \ast}(ch^V_{\widebar W}( \widebar{j}^\ast \iota_Y^\ast f_\ast \sO_X) \cap [V]) \\
& = \iota_{W \ast}(ch^V_{\widebar W}( \widebar{j}^\ast g_\ast h^\ast \sO_X) \cap [V])\\
& = \iota_{W \ast}(ch^V_{\widebar W}( \widebar{j}^\ast g_\ast \sO_Z) \cap [V])\\
& = dch^{\widebar Y}_{Z}(\sO_Z) \cap [V]
\end{align*}
For the other side of the equation, since $h^\ast \mL_{X/Y} \cong \mL_{Z/\widebar Y}$, and from \S~\ref{hot_toddy}, 
$\mr{Td}(\mL_{X/Y}) \cap [V] = \mr{Td}(\mL_{Z/\widebar Y}) \cap [V]$.  Further, since $[f^!]_{vir}$ is operational, we have gives  $[f^!]_{vir} [V] = [g^!]_{vir} [V]$.   Thus, we can replace $X$ with $Z$ and assume that $Y$ is discrete.

We can now assume $j: V \to Y$ with both $V$ and $Y$ discrete,  $f: X \to Y$ quasi-smooth, and $W$ fit into the homotopy Cartesian diagram 
\begin{displaymath}
  \xymatrix{ W \ar[r] \ar[d] & V \ar[d]^j \\  X \ar[r]^f & Y}
\end{displaymath}
Let $p: M^\circ_{\widebar W} V \to \dcone$, $i_1: Y \to \dcone$ and $i_0: C_{\widebar X} Y \to \dcone$.  
Since $i_0$ and $i_1$ are quasi-smooth closed embeddings they commute with $dch^{\dcone}_{\mP^1_{\widebar X}}(\sO_{\widehat X}) \cap -$.  Thus, 
\begin{align*}
i_1^!  (dch^{\dcone}_{\mP^1_{\widebar X}}(\sO_{\widehat X}) \cap [M^\circ_{\widebar W} V]) &= dch^Y_{X}(i_1^\ast \sO_{\widehat X}) \cap [i_1^!] \cdot [M^\circ_{\widebar W} V] \\  
& = dch^Y_{X}(\sO_{X})\cap [V].
\end{align*}
On the other hand, using notation from \S~\ref{deformation_to_normal_cone}, 
\begin{align*}
i_0^!  (dch^{\dcone}_{\mP^1_{\widebar X}}(\sO_{\widehat X}) \cap [M^\circ_{\widebar W} V]) &= dch^{C_{\widebar X}Y}_{\widebar X}(i_0^\ast \sO_{\widehat X}) \cap [i_0^!] \cdot [M^\circ_{\widebar W} V] \\  
& = dch^{C_{\widebar X} Y}_{\widebar X}(\sO_{\widetilde X}) \cap [C_{\widebar W} V].
\end{align*}
$\mA^1$-invariance of the local Chern character ensures an equality, thus
\begin{align*}
 dch^Y_{X}(\sO_{X})\cap [V] &=   i_1^!  (dch^{\dcone}_{\mP^1_{\widebar X}}(\sO_{\widehat X}) \cap [M^\circ_{\widebar W} V])  \\
& = i_0^!  (dch^{\dcone}_{\mP^1_{\widebar X}}(\sO_{\widehat X}) \cap [M^\circ_{\widebar W} V]) \\
& = dch^{C_{\widebar X} Y}_{\widebar X}(\sO_{\widetilde X}) \cap [C_{\widebar W} V].
\end{align*}
We have the following homotopy Cartesian diagram
 \begin{displaymath}
   \xymatrix{ \widetilde X \ar[r]^{j^\prime} \ar[d]^{\mu} & C_{\widebar X} Y \ar[d]^{\nu} \\ \widebar X \ar[r]^{k\qquad} & \mV(\iota_X^\ast \mT_{X/Y}[1])}
 \end{displaymath}
We are being slightly abusive, the equivalence of property (3) of $\sO_{\widetilde X}$ ensures it is the structure sheaf of a derived scheme we are denoting by $\widetilde X$. Note that $dch^{C_{\widebar X} Y}_{\widetilde X}(\sO_{\widetilde X})\cap [C_{\widebar W} V] = \iota_{\widetilde X \ast} dch^{C_{\widebar X} Y}_{\widebar X}(\sO_{\widetilde X}) \cap [C_{\widebar W} V]$.  Base change ensures $j^\prime_{\ast}\sO_{\widetilde X} \cong \nu^\ast k_\ast \sO_{\widebar X}$. 

Since $\nu$ is proper, commutativity with proper pushforward shows
\begin{align*}
dch^{C_{\widebar X} Y}_{\widebar X}(\sO_{\widetilde X}) \cap [C_{\widebar W} V]  & = \mu_{\ast}  (dch^{C_{\widebar X} Y}_{\widetilde X}(\sO_{\widetilde X})\cap [C_{\widebar W} V]) \\
&= dch^{\mV(\iota_X^\ast \mT_{X/Y}[1])}_{\widebar X}(\sO_{\widebar X}) \cap \nu_\ast [C_{\widebar W} V]
\end{align*}
However, the right hand is in discrete algebraic geometry and thus by \cite[Corollary 18.1.2]{fulton_intersection}
\begin{align*}
dch^{\mV(\iota_X^\ast \mT_{X/Y}[1])}_{\widebar X}(\sO_{\widebar X}) \cap \nu_\ast [C_{\widebar W} V] & = ch^{\mV(\iota_X^\ast \mT_{X/Y}[1])}_{\widebar X}(\sO_{\widebar X}) \cap \nu_\ast [C_{\widebar W} V]\\
& = \mr{Td}(\iota_X^\ast \mT_{X/Y}[1])^{-1} \cdot [k^!]_{vir} \cap \nu_\ast [C_{\widebar W} V]\\
&=  \mr{Td}(\iota_X^\ast \mT_{X/Y}[1])^{-1} \cdot \iota_{X\ast}^{-1}  ([f^!]_{vir} \cap [V])
\end{align*}
The last equality is the definition of $[f^!]_{vir}$.  
Using that
\begin{displaymath}
\iota^{-1}_\ast (\mr{Td}(\mT_{X/Y}[1])^{-1} \cap \alpha) = \mr{Td}(\iota_X^\ast \mT_{X/Y}[1])^{-1} \cap \iota_{X\ast}^{-1} \cdot  \alpha
\end{displaymath}
for any $\alpha \in \dCh(X)$ gives the equality
 \begin{align*}
dch^Y_{\widebar X}(\sO_X) \cap [V] & =dch^{C_{\widebar X} Y}_{\widebar X}(\sO_{\widetilde X})\cap [C_{\widebar W} V] \\ &=  \iota_{X\ast}^{-1}( \mr{Td}(\mT_{X/Y}[1])^{-1} \cdot [f^!]_{vir} \cap [V])
 \end{align*}
Applying $\iota_{X\ast}$ to this yields the result.

\end{proof}

\section{$\tau$: the morphism between the two theories}

From Proposition \ref{invariant}, for $f$ a closed embedding, there exists a well defined homomorphism $\tau_f: K(f) \to \dChQ(f)$ defined by $\tau_f([\sF]) = dch^Y_X(\sF)$.  To extend $\tau$ to any morphism we apply Lemma \ref{factorizeit}.  Choosing a factorization  
\begin{displaymath}
  \xymatrix{X \ar[rr]^i \ar[rd]^f & & S \ar[ld]^s \\ & Y & }
\end{displaymath}
with $s$ smooth and $i$ a closed embedding, by Lemma \ref{smoothperfect} $i_\ast \sF$ is perfect. We then 
define $\tau_f([\sF]) := \mr{Td}(i^\ast \mT_{S/Y}) \cap dch^{S}_{X}(\sF) \cap [s^*]$.  In the bivariant language,  $\tau_f([\sF])$ is the image of $(\mr{Td}(i^\ast \mT_{S/Y}), dch^S_X(\sF),  [s^\ast])$ under the natural bivariant multiplication:
\begin{displaymath}
   \dChQ(X \xrightarrow{id} X) \otimes \dChQ(X \xrightarrow{f} S) \otimes \dChQ(S \xrightarrow{s} Y) \to \dChQ(X \to Y).
\end{displaymath}
\begin{remark}
 Since Chern classes commute with bivariant operations (for the same reasons as \cite[Proposition 17.3.2]{fulton_intersection}), we could equivalently define $\tau_f([\sF])= dch^S_X(\sF) \cap \Td(\mT_{S/Y}) \cap [s^\ast]$.  Both are useful in certain contexts. 
\end{remark}
\begin{lemma}
  $\tau_f([\sF])$ is independent of the factorization used to define it.
\end{lemma}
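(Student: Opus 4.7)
The plan is to reduce to a comparison involving a common refinement and then invoke a local Grothendieck--Riemann--Roch compatibility for smooth morphisms. Given two factorizations $X \xrightarrow{i_1} S_1 \xrightarrow{s_1} Y$ and $X \xrightarrow{i_2} S_2 \xrightarrow{s_2} Y$ of $f$, I form $T := S_1 \times_Y S_2$ with projections $p_k : T \to S_k$ (which are smooth by base change) and the diagonal closed embedding $j := (i_1, i_2) : X \to T$. This gives a third factorization $X \xrightarrow{j} T \xrightarrow{s_1 p_1} Y$ with $s_1 p_1 = s_2 p_2$. It suffices to show that the $\tau_f([\sF])$ computed from $(j, s_k p_k)$ equals the one computed from $(i_k, s_k)$ for each $k$; hence the problem reduces to the following: if $i = p \circ j$ with $p : T \to S$ smooth and $j : X \to T$ a closed embedding, then the factorizations $(i, s)$ and $(j, s \circ p)$ of $f$ give the same class.

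For this reduction, I first expand the Todd factor using the relative tangent sequence
\begin{displaymath}
0 \to \mT_{T/S} \to \mT_{T/Y} \to p^\ast \mT_{S/Y} \to 0,
\end{displaymath}
which is valid because $T$, $S$, $Y$ are smooth over each other where relevant. Pulling back via $j$ and using $j^\ast p^\ast = i^\ast$, this yields $\Td(j^\ast \mT_{T/Y}) = \Td(j^\ast \mT_{T/S}) \cdot \Td(i^\ast \mT_{S/Y})$. Combined with the functoriality $[(sp)^\ast] = [p^\ast] \cdot [s^\ast]$, the desired equality of bivariant products collapses to the single identity
\begin{displaymath}
dch^S_X(\sF) \;=\; \Td(j^\ast \mT_{T/S}) \cap dch^T_X(\sF) \cap [p^\ast].
\end{displaymath}

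The key identity above is the local (bivariant) Grothendieck--Riemann--Roch theorem for the smooth morphism $p$ applied to the perfect complex $j_\ast \sF$. By construction $p_\ast j_\ast \sF = i_\ast \sF$, and classical GRR for smooth morphisms gives $ch(p_\ast j_\ast \sF) = p_\ast(ch(j_\ast \sF) \cdot \Td(\mT_{T/S}))$. Since $p$ restricts to an isomorphism on supports $j(\widebar X) \xrightarrow{\sim} i(\widebar X)$, the pushforward by $p$ on supported Chern characters is just the identification through $j$. To transfer this to the bivariant level, I note that for any $V \to S$ with pullback $W = V \times_S T \to T$, the equality $p \circ j = i$ forces $X \times_T W \cong X \times_S V$, so both operations $dch^S_X(\sF) \cap -$ and $dch^T_X(\sF) \cap [p^\ast](-)$ land in the same Chow group, and classical local GRR (\cite[Theorem 18.1]{fulton_intersection} together with the invariance arguments of Proposition~\ref{invariant}) gives the precise discrepancy $\Td(j^\ast \mT_{T/S})$.

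The main obstacle is verifying the local GRR identity in step three at the bivariant level of $dch$, rather than just on a single Chow group; this is essentially making sure the classical statement for local Chern characters lifts to an identity of operational classes. By the same technique used in the proof of Proposition~\ref{invariant}, where bivariant identities for $dch$ are checked by reducing to the underlying discrete schemes and invoking the relevant classical statement from Fulton, this step is more bookkeeping than novel content, but it is the crux of the argument. Once the identity is established, substituting it in gives equality of $\tau_f([\sF])$ over the two factorizations of $f$, completing the proof.
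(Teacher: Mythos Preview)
Your reduction via the fibred product $T = S_1 \times_Y S_2$ and the isolation of the key identity
\[
dch^S_X(\sF) \;=\; \Td(j^\ast \mT_{T/S}) \cap dch^T_X(\sF) \cap [p^\ast]
\]
are exactly what the paper does. The gap is in how you justify this identity. You invoke ``classical GRR for smooth morphisms'' and write $ch(p_\ast j_\ast \sF) = p_\ast(ch(j_\ast \sF)\cdot \Td(\mT_{T/S}))$, but $p$ is not proper (it is a base change of the smooth, generally non-proper, $s_{3-k}$), so GRR does not apply. The observation that $p$ restricted to $j(\widebar X)$ is an isomorphism does not rescue this: there is no $p_\ast$ on Chow groups to speak of, and \cite[Theorem~18.1]{fulton_intersection} gives only functoriality properties of the local Chern character, never a Todd correction.

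The paper supplies exactly the missing step. After pulling back to a discrete $V \to Y$ and setting $W = X\times_Y V$, $Z = S\times_Y V$, $Z' = S'\times_Y V$, one factors $W \to Z\times_V Z'$ through the \emph{section} $h\colon W \to W\times_V Z'$ of the smooth projection $q$. Then the product formula for $dch$ (the argument for closed embeddings from Theorem~\ref{dch_bivariant}, which does not depend on the lemma being proved) gives
\[
dch^{Z\times_V Z'}_W(\sF) = dch^{W\times_V Z'}_W(\sO_W)\cdot dch^{Z\times_V Z'}_{W\times_V Z'}(q^\ast\sF),
\]
and Proposition~\ref{dch_explicit} applied to the quasi-smooth embedding $h$ turns the first factor into $\Td(\mT_{W/W\times_V Z'})\cap [h^!]_{vir}$. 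Since $q\circ h = \mathrm{id}$, the virtual Gysin composite $[h^!]_{vir}\cdot[q^\ast]$ collapses, and the Todd identity follows after the tangent-triangle bookkeeping you already outlined. In short: the ``local GRR for smooth $p$'' you want is not a citable input here; it is produced in this paper precisely by Proposition~\ref{dch_explicit}, and you should invoke that instead.
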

\begin{proof}
For this proof, since $[f]$ is not changing,  we will denote $\tau_S$ as the operational element corresponding to a factorization $X \to S \to Y$.  The setup yields the commutative diagram  
\begin{displaymath}
  \xymatrix{X \ar@{-->}[rd]^k  \ar[rdd] \ar[rrd] & & \\ & S \times_Y S^\prime \ar[r]_h \ar[d]^{h^\prime} & S \ar[d]^{s} \\ & S^\prime \ar[r]^{s^\prime} & Y}
\end{displaymath}
with all vertical and horizontal morphisms smooth, and all diagonal morphisms closed embeddings.   By symmetry we can replace our factorization  $X \to S^\prime \to Y$ with $X \to S \times_Y S^\prime \to Y$.  Similar to \cite[Proposition 18.3.1]{fulton_intersection} one then has a fiber diagram

\begin{displaymath}
  \xymatrix{ X \ar[r]& X\times_Y S^\prime \ar[d] \ar[r]& S \times_Y S^\prime \ar[d] \\ & X \ar[r] & S}
\end{displaymath}
Since $\tau_S$ and $\tau_{S \times_Y S^\prime}$  are composed of operational elements, they both commute with flat pullback and proper pushforward.  Thus, it is enough to test $\tau_S$ and $\tau_{S \times_Y S^\prime}$ against $[V] \in \dCh(V)$ for $g: V \to Y$ discrete. 

Pulling back the above diagram along the morphism $g$ yields a fiber diagram  
\begin{displaymath}
  \xymatrix{ W \ar[r]^(.4)h & W \times_V Z^\prime \ar[d]^q \ar[r]^j& Z \times_V Z^\prime \ar[d]^p \\ & W \ar[r]^i & Z}
\end{displaymath}
with $W := X \times_Y V$, $Z := S \times_Y V$, and $Z^\prime := S^\prime \times_Y V$.  The smoothness of $S \to Y$ implies $Z \cong \widebar{Z}$ and $Z^\prime \cong \widebar{Z^\prime}$.  Note that $h$ is a quasi-smooth embedding.  Since $dch$ is operational, commutativity with flat pullback means
\begin{align*}
  [q^\ast]\cdot dch^Z_W(i_\ast \sF)\cap [Z] = dch^{Z \times_V Z^\prime}_{W \times_V Z^\prime}(p^\ast i_\ast \sF) \cap [Z \times_V Z^\prime]
  & = dch^{Z \times_V Z^\prime}_{W \times_V Z^\prime}(j_\ast q^\ast \sF) \cap [Z \times_V Z^\prime]
\end{align*}

Now, although commutativity of $\tau$ with products will be shown Theorem \ref{dch_bivariant}, the arguments used carry over directly to this case (using that closed embeddings are projective) and Proposition \ref{dch_explicit} implies
\begin{align*}
dch^{Z \times_V Z^\prime}_{W}(h^\ast q^\ast \sF\otimes_{\sO_{W}} \sO_W) & = dch^{W \times_V Z^\prime}_{W}(\sO_W) \cap dch^{Z \times_V Z^\prime}_{W \times_V Z^\prime}(q^\ast \sF)\\ & = \Td(\mT_{W/W \times_V Z^\prime}) \cap [ h^!]_{vir} \cdot [q^\ast] \cdot dch^Z_W(\sF)\cap [Z]
\end{align*}
Since $q$ is smooth, $[q^\ast] = [q^!]_{vir}$ with trivial obstruction theory. 
Using functorality of virtual Gysin homomorphisms and noting that $h^\ast q^\ast \sF \cong \sF$, 
\begin{align*}
  dch^{Z \times_V Z^\prime}_{W}(\sF) \cap [Z \times_V Z^\prime] &= \Td(\mT_{W/W \times_V Z^\prime}) \cap [ (q \circ h)^!]_{vir} \cdot dch^Z_W(\sF)\cap [Z]\\
 &= \Td(\mT_{W/W \times_V Z^\prime}) \cap dch^Z_W(\sF)\cap [Z]
\end{align*}
Multiplying both sides by $\Td((j \circ h)^\ast \mT_{Z \times_V Z^\prime})$ the result will follow once we show
\begin{align*}
  \Td((j \circ h)^\ast \mT_{Z \times_V Z^\prime}) \cap \Td(\mT_{W/W \times_V Z^\prime}) \cong \Td(i^\ast \mT_{Z/V})
\end{align*}
This follows by multiplicativity of the Todd class, dualizing the distinguished triangles
\begin{displaymath}
  \xymatrix{ h^\ast \mL_{W \times_V Z^\prime/W} \ar[rr] & & 0 \ar[ld] \\ & \mL_{W/W \times_V Z^\prime} \ar@{-->}[lu]^{[1]} &}
\end{displaymath}
\begin{displaymath}
  \xymatrix{ p^\ast \mL_{Z/V} \ar[rr] & & \mL_{Z \times_V Z^\prime/V} \ar[ld] \\ & \mL_{Z \times_V Z^\prime/Z}  \ar@{-->}[lu]^{[1]} &}
\end{displaymath}
and the isomorphism $\mL_{W \times_V Z^\prime/W} \cong j^\ast \mL_{Z \times_V Z^\prime/Z}$.

\end{proof}


Now that we know it is well defined, we must verify that it is a morphism of bivariant theories.

\begin{theorem}
\label{dch_bivariant}
  $\tau: K \to \dChQ$ is a morphism of bivariant theories.
\end{theorem}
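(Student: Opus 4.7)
The plan is to verify the three compatibility axioms a morphism of bivariant theories must satisfy: compatibility with pullback along independent squares, with proper pushforward, and with the bivariant product. All three rest on the well-definedness of $\tau$ just established, on the operational nature of $dch$ from Proposition \ref{invariant}, and on the basic multiplicative properties of the Todd class developed in \S\ref{hot_toddy}.

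Pullback is essentially formal. Given an independent homotopy fiber square with base-change morphism $h: Y^\prime \to Y$, any factorization $f = s \circ i$ pulls back to a factorization $f^\prime = s^\prime \circ i^\prime$ since smoothness and closed embeddings are stable under homotopy base change. Since $dch$ is operational it commutes with flat pullback; the Todd class of a perfect complex commutes with pullback by its defining condition; and the pullback of $[s^\ast]$ is $[s^{\prime \ast}]$ by bivariant axiom A23 applied to $\dChQ$. Assembling the three factors of $\tau_f([\sF])$ yields $h^\ast \tau_f([\sF]) = \tau_{f^\prime}([h^{\prime \ast}\sF])$.

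For pushforward, let $X \xrightarrow{p} W \xrightarrow{f} Y$ with $p$ proper and $\sF \in \perf(fp)$. Factor $f = s \circ i$ with $s: S \to Y$ smooth, and use quasi-projectivity to factor $p$ as a closed embedding $j: X \to \mP^n_W$ followed by the smooth projection $\pi$. Combining these yields a factorization of $fp$ as a closed embedding into an ambient scheme smooth over $Y$. Commutativity of $dch$ with proper pushforward (Proposition \ref{invariant}) handles the closed embedding contribution; multiplicativity of the Todd class applied to the cotangent complex triangle of the combined smooth morphism handles the Todd contribution; and the projection formula for $\dCh$ together with $p_\ast = \pi_\ast j_\ast$ collapses the smooth pullback parts, yielding $\tau_f([p_\ast \sF]) = p_\ast \tau_{fp}([\sF])$.

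For the product, consider composable $X \xrightarrow{f} Y \xrightarrow{g} Z$ with $\sF \in \perf(f)$ and $\sG \in \perf(g)$. Choose factorizations $f = s_f \circ i_f$ via $S_f \to Y$ and $g = s_g \circ i_g$ via $S_g \to Z$; by base-changing $i_g$ along $s_f$ and stringing the pieces together, these can be assembled into a factorization of $gf$ as a closed embedding into a scheme smooth over $Z$. Testing both sides against a class $[V]$ for a discrete $V \to Z$ and invoking the operational axioms reduces the verification to a computation on the resulting ambient smooth scheme. The key input is then the classical multiplicativity of the local Chern character with supports from \cite[Chapter 18]{fulton_intersection}, combined with multiplicativity of the Todd class applied to the cotangent triangle of the combined smooth morphism. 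The main obstacle lies in this last verification, where the careful bookkeeping of Todd class contributions and support data across the combined factorization, together with the reduction to the underived multiplicativity formula via base change and the projection formula, constitutes the technical heart of the theorem.
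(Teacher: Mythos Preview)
Your pullback argument is correct and matches the paper's.

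Your pushforward argument has a genuine gap. You correctly decompose a proper $p$ into a closed embedding followed by a projective-bundle projection $\pi$, and the closed-embedding case is indeed handled by the identity $ch^{\widebar Z}_{\widebar Y}(\sF) = \widebar f_\ast ch^{\widebar Z}_{\widebar X}(\sF)$ for a perfect complex exact off $\widebar X$. But the projection case is not a formality: the claim that ``the projection formula \ldots collapses the smooth pullback parts'' hides the entire content of Grothendieck--Riemann--Roch for $\pi$. Concretely, you must show
\[
\pi^\prime_\ast\bigl(\Td(\mT_{\mP_S(\sE)/Z}) \cap dch^{\mP_S(\sE)}_{\mP_Y(\sE)}(\sF) \cap [\pi^\ast]\cdot[\eta^\ast]\bigr) \;=\; \tau_g\bigl([\pi^\prime_\ast \sF]\bigr),
\]
and this does not follow from the projection formula or Todd multiplicativity alone. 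The paper's proof first establishes a generation lemma: $K(g\circ\pi^\prime)$ is generated by classes of the form $\sO_Y(n)\otimes \pi^{\prime\ast}\sG$ with $\sG\in\perf(g)$ (via the Beilinson-type resolution of the diagonal on $\mP(\sE)$). It then checks the identity on these generators, where the computation boils down to the classical formula
\[
\pi^\prime_\ast\bigl(dch(\sO_Y(n))\cap \Td(\mT_{\mP_Y(\sE)/Y})\cap [\pi^{\prime\ast}]\bigr) \;=\; dch^Y_Y(\Sym^n q^\ast\sE),
\]
which is exactly the projective-bundle case of GRR from \cite{fulton_intersection}. Without the generation lemma you have no control over a general $\sF\in\perf(g\circ\pi^\prime)$, and without the explicit computation on generators you have not verified anything.

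Your product argument is also underspecified, though less severely. Reducing to composable closed embeddings $X\hookrightarrow Y\hookrightarrow Z$ and testing on discrete $V\to Z$ is the right start, but the intermediate fiber $V^\prime = V\times_Z Y$ need not be discrete, so the two local Chern characters you want to multiply do not both live on a single discrete ambient scheme in the form Fulton's multiplicativity requires. The paper circumvents this by recasting the identity in operational bivariant $K$-theory (where it is transparent) and then invoking that $\tau$ on structure morphisms is an isomorphism over $\mQ$, so an operational identity in $K$ transfers to $\dChQ$. If you want to use Fulton's multiplicativity directly, you will need an extra reduction step to get both complexes on the same discrete scheme.
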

\begin{proof}

We must check that $\tau$ commutes with the product, proper pushforward, and pullback.  The proof generally follows the analogous proof, as carried out in \cite{fulton_intersection}.

We begin with pullback.  Given $X \xrightarrow{f} Y$, $Y^\prime \xrightarrow{g} Y$, factorization of $f$ induces the diagram
\begin{displaymath}
  \xymatrix{ X^\prime \ar[r]^{i^\prime} \ar[d]^{g^{\prime \prime}} & S^\prime \ar[r]^{s^\prime} \ar[d]^{g^\prime} & Y^\prime \ar[d]^g \\ X \ar[r]^i  & S \ar[r]^s &  Y}
\end{displaymath}
with all squares Cartesian.  As usual,  $s \circ i = f$, $i$ a closed embedding and $s$ is smooth.  Stability under base change implies the same for $i^\prime$ and $s^\prime$.  Thus, we have a factorization of $f^\prime = s^\prime \circ i^\prime$.  Given $\sF \in \perf(f)$, by definition of $\tau_f([\sF])$ and pullback of operational classes
\begin{align*}
  g^\ast \tau_f([\sF]) &= g^{\second \ast} \Td(i^\ast \mT_{S/Y}) \cap  g^{\prime \ast} dch^{S}_{X}((g^\prime )^\ast i_\ast \sF) \cap  g^\ast [s^{\ast}]\\
&= \Td(g^{\second \ast} i^\ast \mT_{S/Y}) \cap dch^{S^\prime}_{X^\prime}( g^{\prime \ast} i_\ast \sF)\cap  [s^{\prime \ast}]\\
&= \Td(i^{\prime\ast} g^{\prime ^\ast} \mT_{S/Y}) \cap dch^{S^\prime}_{X^\prime}( i^\prime_\ast g^{\second \ast} \sF)\cap  [s^{\prime \ast}]\\
& = \Td(i^{\prime\ast} \mT_{S^\prime/Y^\prime}) \cap dch^{S^\prime}_{X^\prime}(g^{\second \ast} \sF) \cap  [s^{\prime \ast}]\\
& = \tau_{f^\prime}([g^{\prime \ast} \sF])\\
& = \tau_{f^\prime}(g^\ast [\sF])
\end{align*}


Throughout the rest of this proof, let $Y \xrightarrow{q} B$ be the structure morphism, then $\mP_Y(\sE) := Y \times_{B} \mP(\sE)$ for some locally free sheaf $\sE$ on $B$.  We also will denote $\sO_Y(n) := q^{\prime \ast} \sO_{\mP(\sE)}(n)$, where $q^\prime: \mP_Y(\sE) \to \mP(\sE)$ is the natural morphism induced from $q$.  It will be relevant to the proof that if $\pi^\prime: \mP_Y(\sE) \to Y$ is the natural projection, then $\pi^\prime_\ast \sO_Y(n) \cong Sym^n(q^\ast \sE)$; this follows from proper base change.

Let $f, g$ be a composable pair of morphisms and assume $f$ is proper.  The proof of Lemma \ref{factorizeit}  shows $f$ factors 
 \begin{align*}
   \xymatrix{X \ar@{^{(}->}[r]^i \ar[rd] & U \times_S Y \ar[d] \ar[r]^{j} & \mP_Y(\sE) \ar[ld]^{\pi^\prime} \\ & Y&}
 \end{align*}
Since $\pi^\prime$ is separated, $j \circ i$ is proper \cite[Exercises 4.8]{Hartshorne}, and thus a closed immersion. By Lemma \ref{closedequiv}, if $\sF \in \perf(f)$, $(i \circ j)_\ast \sF$ is perfect and we can replace our factorization $X \to U \to Y$ with $X \to \mP_Y(\sE) \to Y$.  Choosing a global factorization of $g$ we obtain the commutative diagram, with the square Cartesian
\begin{displaymath}
  \xymatrix{ X \ar[r]^i \ar[rd]^f & \mP_Y(\sE) \ar[r]^j \ar[d]^{\pi^\prime} & \mP_S(\sE) \ar[d]^{\pi} \\ & Y \ar[r]^k \ar[rd]^g & S \ar[d]^\eta \\ & & Z}
\end{displaymath}
All horizontal morphisms are closed immersions, the vertical morphisms are smooth,  and $\pi$ proper.  This diagram allows us to split proper morphisms into two simpler cases: closed embeddings and projective fibrations.  For composable embeddings it is a consequence of the following property of the local Chern character:
\begin{displaymath}
  ch^{\widebar Z}_{\widebar Y}(\sF) = \widebar{f}_{\ast} ch^{\widebar Z}_{\widebar X}(\sF) 
\end{displaymath}
for a perfect complex on $\widebar Z$ exact off of $\widebar Y$ (and thus, exact off of $\widebar X$).

We can now assume that $i = id$ in the above diagram.  We first need a lemma.
\begin{lemma}
  $K(g \circ \pi^\prime)$ is generated by classes $\sO_Y(n) \otimes_{\sO_{\mP_{Y}(\sE)}} \pi^{\prime \ast} \sG$, where $\sG \in \perf(g)$.
\end{lemma}
\begin{proof}
Let $\Delta: \mP(\sE) \to \mP(\sE) \times \mP(\sE)$ be the diagonal morphism and let $\mP(\sE)_\Delta$ denote $\mP(\sE)$ as a closed subscheme of $\mP(\sE) \times \mP(\sE)$.  Since $B$ is discrete, the equivalence 
\begin{displaymath}
\mP(\sE)_{\Delta} \cong  (\mP(\sE) \times \mP(\sE)) \times_{\mV(\sO(1)\boxtimes \mT_{\mP(\sE)/B}(-1))} (\mP(\sE) \times \mP(\sE))
\end{displaymath}
 where the fiber product is over the the $0$-section and the dual of the natural morphism $\mV(\sO(-1)\boxtimes \Omega(1)) \to \sO_{\mV(\sO(-1)\boxtimes \Omega(1)}$. Since $\mP_Y(\sE) \to \mP_Y(\sE) \times_Y \mP_Y(\sE)$ is the pullback of the $\Delta$, using similar notation as above, $\sO_{\mP_Y(\sE)_\Delta} \cong K(\sO_Y(1)\boxtimes \mathbb{T}_{\mP_Y(\sE)/Y}(-1); s)$.  With this notation in place, the argument proceeds much like \cite[\S 8.3]{Huybrechts} (making the appropriate conversions between simplicial and dg-notation, e.g. bad truncation corresponds to taking skeleta).  In particular, one obtains a series of fiber-cofiber diagrams
\begin{gather*}
  \xymatrix{\sO(-n)\boxtimes\bigwedge^n\mL_{\mP_{Y}(\sE)/Y}(n) \ar[r] \ar[d] & \sO(-(n-1))\boxtimes\bigwedge^{n-1}\mL_{\mP_{Y}(\sE)/Y}(n) \ar[d] \\ 0 \ar[r] & M_{n}}\\
  \xymatrix{M_{i+1} \ar[r] \ar[d] & \sO(-i)\boxtimes\bigwedge^{i}\mL_{\mP_{Y}(\sE)/Y}(i) \ar[d] \\ 0 \ar[r] & M_{n}}\\ 
  \xymatrix{M_{1} \ar[r] \ar[d] & \sO_{\mP_Y(\sE) \times \mP_{Y}(\sE)} \ar[d] \\ 0 \ar[r] & \sO_{\mP_{Y}(\sE)_{\Delta}}}
\end{gather*}

It is well known the Fourier-Mukai functor (integral transform) $\Phi_{\sO(-i)\boxtimes\bigwedge^{i}\mL_{\mP_{Y}(\sE)/Y}(i)} \in \mr{End}(\perf(g \circ \pi^\prime))$ acts via the following 
\begin{displaymath}
\Phi_{\sO(-i)\boxtimes\bigwedge^{i}\mL_{\mP_{Y}(\sE)/Y}(i)} \cong (\pi^{\prime \ast}\pi^\prime_\ast (- \otimes \bigwedge^i\mL_{\mP_Y(\sE)/Y}(i)))\otimes \sO(-i)
\end{displaymath}
The fact that this preserves $\perf(g \circ \pi^\prime)$ is clear from this description.  For example, $\sO(n) \boxtimes \sG \in \perf(g \circ \pi)$ since it is a product by the same means as shown for products in bivariant K-theory..  Using induction, one has that $\perf(g \circ \pi^\prime)$ is generated (as a stable category) by elements of the form $\sO(n) \otimes_{\sO_X} \pi^{\prime \ast} \sG$ and this clearly stays true upon reduction to $K(g \circ \pi^\prime)$
\end{proof}

With this lemma, we are left proving the statement for $\sF \cong \sO(n) \otimes \pi^{\prime}(\sG)$.  
\begin{align*}
  \pi^\prime_\ast\tau_{g \circ \pi^\prime} ([\sF])) &= \pi^\prime_\ast(\Td(j^\ast \mT_{\mP_S(\sE)/Z}) \cap dch^{\mP_S(\sE)}_{\mP_Y(\sE)}(j_\ast (\sO_Y(n) \otimes \pi^{\prime\ast}(\sG)))  \cap [\eta \circ \pi^*]) \\
 & =  \pi^\prime_\ast(\Td(j^\ast \mT_{\mP_S(\sE)/Z}) \cap dch^{\mP_S(\sE)}_{\mP_Y(\sE)}((\sO_S(n) \otimes j_\ast \pi^{\prime\ast}(\sG)))  \cap [\eta \circ \pi^\ast]) 
\\ 
&= \pi^\prime_\ast( \Td(j^\ast \mT_{\mP_S(\sE)/Z}) \cap dch^{\mP_Y(\sE)}_{\mP_Y(\sE)}(\sO_Y(n)) \cap dch^{\mP_S(\sE)}_{\mP_Y(\sE)}(j_\ast \pi^{\prime\ast}(\sG))  \cap [\eta \circ \pi^*]) \\
& = \pi^\prime_\ast(\Td(j^\ast \mT_{\mP_S(\sE)/Z}) \cap dch^{\mP_Y(\sE)}_{\mP_Y(\sE)}(\sO_Y(n)) \cap dch^{\mP_S(\sE)}_{\mP_Y(\sE)}(\pi^\ast k_\ast (\sG))  \cap [\eta \circ \pi^*]) \\
& = \pi^\prime_\ast(\Td(j^\ast \mT_{\mP_S(\sE)/Z}) \cap dch^{\mP_Y(\sE)}_{\mP_Y(\sE)}(\sO_Y(n)) \cap dch^{\mP_{S}(\sE)}_{\mP_{Y}(\sE)}(\pi^\ast k_\ast (\sG))  \cap [\pi^\ast] \cdot [\eta^\ast] ) 
\end{align*}
The second to third line is a well known property of the local Chern character, the second to last is from base change. However, by multiplicativity of the Todd class,
\begin{align*}
\Td(j^\ast \mT_{\mP_S(\sE)/Z}) \cap -  & = \Td(j^\ast \pi^\ast \mT_{S/Z}) \cap \Td(j^\ast \mT_{\mP_{S}(\sE)/S}) \cap -\\
& = \Td(\pi^{\prime \ast} k^\ast  \mT_{S/Z}) \cap \Td(j^\ast \mT_{\mP_{S}(\sE)/S}) \cap -  \\
 & = \Td(j^\ast \mT_{\mP_{S}(\sE)/S}) \cap \pi^{\prime \ast} \Td( k^\ast  \mT_{S/Z}) \cap -
\end{align*}
This equality combined with commutativity of $dch$ with flat pullback implies 
\begin{align*}
 \pi^\prime_\ast \tau_{g \circ \pi^\prime}([\sF]) & = \pi^\prime_\ast( dch^{\mP_Y(\sE)}_{\mP_Y(\sE)}(\sO_Y(n)) \cap \Td(j^\ast \mT_{\mP_S(\sE)/Z}) \cap [\pi^{\prime \ast}]  \cdot dch^{S}_{Y}(k_\ast (\sG))  \cap [\eta^\ast]) \\
& = \pi^\prime_\ast( dch^{\mP_Y(\sE)}_{\mP_Y(\sE)}(\sO_Y(n)) \cap \Td(j^\ast \mT_{\mP_{S}(\sE)/S}) \cap \pi^{\prime \ast} \Td( k^\ast  \mT_{S/Z}) \cap [\pi^{\prime \ast}] \cdot dch^{S}_{Y}(k_\ast (\sG)))  \cap [\eta^\ast]) \\
& = \pi^\prime_\ast( dch^{\mP_Y(\sE)}_{\mP_Y(\sE)}(\sO_Y(n)) \cap \Td(j^\ast \mT_{\mP_{S}(\sE)/S}) \cap [\pi^{\prime \ast}] \cdot \tau_g([\sG])
\end{align*}
However, we have
\begin{align*}
\pi^\prime_\ast (dch^{\mP_Y(\sE)}_{\mP_Y(\sE)}(\sO_Y(n)) \cap \Td(\mL_{\mP_Y(\sE)/Y}) \cap [\pi^{\prime *}]) = dch^Y_Y(Sym^n q^\ast \sE) 
\end{align*}
This equality is shown in \cite{fulton_intersection} in the discrete case.  It follows here for general $Y$  by the definition of $\mP_Y(\sE)$ and the definition of pullback of operational classes.

Combining this with the fact $Sym^n q^\ast \sE = k^\ast (Sym^n p^\ast \sE)$ (with $p: S \to B$ the natural structure morphism) and the projection formula, 
\begin{align*}
  \pi^\prime_\ast \tau_{g \circ \pi^\prime}([\sF]) & = \pi^\prime_\ast(dch^{\mP_Y(\sE)}_{\mP_Y(\sE)}(\sO_Y(n)) \cap \Td(\mL_{\mP_Y(\sE)/Y}) \cap [\pi^{\prime *}]  \cdot \tau_g([\sG]))\\
& = dch^Y_Y(Sym^n q^\ast \sE) \cap \tau_g([\sG]) \\
&= \tau_g([\sG \otimes Sym^n q^\ast\sE]) \\
& = \tau_g([\pi_\ast^\prime \sO_Y(n) \otimes_{\sO_Y} \sG])\\
&= \tau_{g}([\pi^\prime_\ast (\sO_y(n)\otimes_{\sO_{\mP_Y(\sE)}} \pi^{\prime \ast}\sG)])\\
& =  \tau_g([\pi^\prime_\ast][\sF]) 
\end{align*}
The case of $f$ proper is now shown.


We now show that $\tau$ commutes with products.  Given $X \xrightarrow{f} Y \xrightarrow{g} Z$, let $i^\prime: X \hookrightarrow U$ and $k^\prime: Y \hookrightarrow  V$ be closed embeddings into Zariski open subsets of some $\mP_B(\sE)$. From the diagram  
\begin{displaymath}
  \xymatrix{ X \ar[r]^(.3){i} \ar[rd]^f & Y \times_B U \times_B V \ar[d]^{\pi^\prime} \ar[r]^j  & Z \times_B V \times_B U \ar[d]^\pi \\ & Y \ar[r]^{k} \ar[rd]^g & Z \times_B V \ar[d]^\eta \\ & & Z}
\end{displaymath}
The morphism $i := \Gamma_k \circ f \times i^\prime$ and $j := k \times id$.  It is clear that this is a commutative diagram and the square is Cartesian. For notational convenience, let $W = Z \times_B V$,  $W^\prime = Z \times_B V \times_B U$,  and $W^{\prime \prime} = Y \times_B V \times_B U$.
\begin{align*}
   \tau_f([\sF]) \cdot \tau_g([\sG]) & = \Td(i^\ast \mT_{W^\second/Y}) \cap dch^{W^{\second}}_X(i_\ast \sF)\cap   [\pi^{\prime *}] \cdot \Td(k^\ast \mT_{W/Z}) \cap  dch^W_Y(k_\ast \sG) \cap  [\eta^*]  \\
& =  \Td(i^\ast \mT_{W^\second/Y}) \cap dch^{W^{\prime \prime}}_X(i_\ast \sF)  \cap \Td(\pi^{\prime \ast} k^\ast \mT_{W/Z}) \cap  dch^{W^\prime}_{W^{\prime \prime}}(\pi^\ast k_\ast \sG) \cap [\eta \circ \pi^*] \\
& = \Td(i^\ast \mT_{W^\second/Y}) \cap dch^{W^{\prime \prime}}_X(i_\ast \sF) \cap  \Td(j^\ast \pi^{\ast} \mT_{W/Z}) \cap dch^{W^\prime}_{W^{\prime \prime}}( j_\ast\pi^{\prime \ast} \sG) \cap [\eta \circ \pi^*] \\
& =  \Td(i^\ast j^\ast \mT_{W^\prime/W}) \cap \Td(i^\ast j^\ast \pi^\ast \mT_{W/Z}) \cap dch^{W^{\prime \prime}}_X(i_\ast \sF) \cap  dch^{W^\prime}_{W^{\prime \prime}}( j_\ast\pi^{\prime \ast} \sG) \cap [\eta \circ \pi^*]\\
& = \Td(i^\ast j^\ast \mT_{W^\prime/Z}) \cap dch^{W^{\prime \prime}}_X(i_\ast \sF) \cap  dch^{W^\prime}_{W^{\prime \prime}}( j_\ast\pi^{\prime \ast} \sG) \cap [\eta \circ \pi^*]
\end{align*}
And we are left showing that for $dch^{W^{\prime \prime}}_X(i_\ast \sF) \cap dch^{W^\prime}_{W^{\prime \prime}}( j_\ast\pi^{\prime \ast} \sG) = dch^{W}_{X}((j \circ i)_\ast \sF \otimes i^\ast \pi^{\prime \ast} \sG)$.  
Thus we are reduced to the following: let $f: X \to Y$, $g: Y \to Z$ be composable closed embeddings and $\sG \in \perf(g)$, $\sF \in \perf(f)$ and $k = g \circ f$ then 
\begin{displaymath}
  dch^{Y}_X(\sF) \cap dch^{Z}_{Y}(\sG) = dch^{Z}_{X}( \sF \otimes f^\ast \sG)
\end{displaymath}

By commutativity with proper morphisms, it suffices to show equality as bivariant classes evaluated on discrete schemes.  Given $h: V \to Z$ with $V$ discrete, and the accompanying fiber diagram
\begin{displaymath}
  \xymatrix{V^\second \ar@{^{(}->}[r]^{f^\prime} \ar[d]^{h^\second} &V^\prime \ar[d]^{h^\prime}  \ar@{^{(}->}[r]^{g^\prime} & V \ar[d]^h \\ X  \ar@{^{(}->}[r]^f & Y \ar@{^{(}->}[r]^g & Z}
\end{displaymath}
it is clear $h^\ast g_\ast \sG \cong g^\prime_\ast h^{\prime \ast} \sG$ is a perfect complex on $\sV$ supported on $\widebar V^\prime$.  Since $V$ is assumed to be quasi-projective and $B$ is smooth over our base field $K$, $\bderive(V)$ is the usual bounded derived category.  The above isomorphism, using the $\infty$-categorical Barr-Beck \cite{higher_algebra}, shows $h^\ast g_\ast \sG \otimes_{\sO_V} - $  is naturally an object of $\bderive(V^\prime)$ (i.e., $\sG$ has a natural $\sO_{V^\prime}$-module action and thus imparts this onto any tensor product).  This gives a natural morphism
\begin{align*}
  K_0(V) \xrightarrow{\otimes h^\ast j_\ast \sG}  K_0( V^\prime) 
\end{align*}
denoting this morphism as $\sigma^V_{V^\prime}(\sG)$ and using similar arguments for $\sF$, we claim $\sigma^{V^\prime}_{V^\second}(\sF) \circ \sigma^V_{V^\prime}(\sG) = \sigma^V_{V^\second}(\sF \otimes f^\ast \sG)$.  In other words, we have an equality as elements in operational bivariant K-theory.   Using standard base change formulas, it suffices to show for $V = Z$.  With this setup, the statement  is obvious since the former is induced by $\sK \to (\sK \otimes_{\sO_Z} \sG)\otimes_{\sO_{Y}} \sF)$ while the latter is $\sK \otimes_{\sO_Z}(\sG \otimes_{\sO_{Y}} \sF)$ for any $\sK \in \bderive(Y)$.  Clearly as $\sO_{V}$-modules these are equivalent; since the $\sO_{V^\second}$-module structure is induced by the $\sO_{V^\second}$-module structure on $\sF$ in both cases, they are equivalent as objects in $\bderive(V^\second)$ as well.

To show they are equivalent as elements of operational derived Chow, we use the following lemma
\begin{lemma}
  Let $f: X \to B$ be the structure morphism (B smooth).  Then $K(f) \cong K_0(X) \cong K_0(\widebar X)$ and $\tau_f\otimes \mQ:K(f)\otimes \mQ \xrightarrow{\cong} \dChQ(f) \cong A(\widebar X)\otimes \mQ$. 
\end{lemma}
\begin{proof}
 Let $g: B \to \spec k$ be the structure morphism, then it is clear $\perf(f) \cong \perf(g \circ f)$.  By Lemma \ref{sameoverafield}, $\perf(f) \cong \perf(\widebar f)$.  For $X = \widebar X$, $\perf(g \circ f)  \cong \mr{D}^b(X)$ and the first statement follows.

The second statement is clear for $X = \widebar X$ since in this case $\tau_f$ is the same as defined in \cite{fulton_intersection}.  The result for non-discrete $X$ follows by the commutativity of $\tau$ with closed embeddings.   
\end{proof}
This lemma shows that as operational theories, operational derived K-theory and operational derived Chow are equivalent over the rationals.  As such, an operational identity holding on operational derived K-theory must hold on operational derived Chow as well.
\end{proof}

\section{Applications}
\label{applications}
\subsection{Grothendieck-Riemann-Roch}

For a Grothendieck-Riemann-Roch formula, we must compare the image of the natural orientation of $K$ (applying $\tau$) with that of $\dCh$.   Following Proposition \ref{dch_explicit}, this is all but a formality.

\begin{theorem}
\label{grr}
  Let $f: X \to Y$ be quasi-smooth.  Then $\tau([\sO_X]) = \Td(\mL_{X/Y}) \cap [f^!]_{vir}$.
\end{theorem}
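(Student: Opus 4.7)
The plan is to reduce the general quasi-smooth case to the quasi-smooth embedding case already handled in Proposition \ref{dch_explicit}, by using the factorization provided by Lemma \ref{factorizeit}. Write $f = s \circ i$ with $i \colon X \hookrightarrow S$ a closed embedding and $s \colon S \to Y$ smooth. Since $s$ is smooth and $f$ is quasi-smooth, the long exact sequence for $i^\ast \mL_{S/Y} \to \mL_{X/Y} \to \mL_{X/S}$ shows that $\mL_{X/S}$ is perfect of Tor-amplitude $[-1,0]$; in other words $i$ is automatically a quasi-smooth embedding. By the definition of $\tau$ given just before Theorem \ref{dch_bivariant}, we have
\begin{displaymath}
\tau_f([\sO_X]) = \Td(i^\ast \mT_{S/Y}) \cap dch^S_X(\sO_X) \cap [s^\ast].
\end{displaymath}

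Next I would apply Proposition \ref{dch_explicit} to the quasi-smooth embedding $i$ to rewrite $dch^S_X(\sO_X) = \Td(\mT_{X/S}) \cap [i^!]_{vir}$. Substituting gives
\begin{displaymath}
\tau_f([\sO_X]) = \Td(i^\ast \mT_{S/Y}) \cap \Td(\mT_{X/S}) \cap [i^!]_{vir} \cdot [s^\ast].
\end{displaymath}

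To finish I would combine the two Todd factors and the two orientation factors. Dualizing the cotangent triangle $i^\ast \mL_{S/Y} \to \mL_{X/Y} \to \mL_{X/S}$ yields a distinguished triangle of perfect complexes $\mT_{X/S} \to \mT_{X/Y} \to i^\ast \mT_{S/Y}$, so the multiplicative behavior of $\Td$ recorded in \S\ref{hot_toddy} gives $\Td(i^\ast \mT_{S/Y}) \cap \Td(\mT_{X/S}) = \Td(\mT_{X/Y})$. On the other side, Corollary \ref{flat_qs_ops}(3) identifies $[s^\ast]$ with $[s^!]_{vir}$, and the multiplicativity of the orientation $\nu$ noted after its definition (equivalently, functoriality of virtual pullbacks from \cite{virt_pullback}) yields $[i^!]_{vir} \cdot [s^!]_{vir} = [f^!]_{vir}$. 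Assembling the pieces produces
\begin{displaymath}
\tau_f([\sO_X]) = \Td(\mT_{X/Y}) \cap [f^!]_{vir},
\end{displaymath}
which is the claim.

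The real work was already done in Proposition \ref{dch_explicit}, so the only genuinely delicate point in this argument is making sure the factorizations of both the Todd class and of the virtual Gysin orientation are compatible; neither is automatic in the derived setting, but the first follows from multiplicativity on distinguished triangles in $\sO_X$-mod applied to the cotangent triangle, and the second is precisely the multiplicativity statement already recorded for the orientations $\theta$ and $\nu$.
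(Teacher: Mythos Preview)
Your proposal is correct and follows essentially the same route as the paper: factorize via Lemma \ref{factorizeit}, invoke Proposition \ref{dch_explicit} for the closed-embedding part, handle the smooth part trivially, and reassemble using multiplicativity of $\Td$ and of the virtual Gysin orientation. The paper's proof is more terse---it phrases the reduction via multiplicativity of the orientations $\theta$ and $\nu$ together with Theorem \ref{dch_bivariant} rather than unpacking the definition of $\tau_f$ directly---but once you unwind that, the computation is exactly yours; your extra remark that $i$ is automatically quasi-smooth is a point the paper leaves implicit.
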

\begin{proof}
Using a choice of factorization in Lemma \ref{factorizeit} and the multiplicativity of both orientations, it suffices to prove the theorem for quasi-smooth closed embeddings and smooth morphisms.  For smooth morphisms, the theorem follows directly from the definitions (using that $dch^X_X(\sO_X) = ch^{\widebar X}_{\widebar X} (\sO_{\widebar X}) = id \in \dCh(X \to X)$).  The case of quasi-smooth embeddings is Proposition \ref{dch_explicit}.  
\end{proof}

We now will derive the more classical explicit Grothendieck-Riemann-Roch formulas.  We will assume $B = \spec k$, with $k$ a field.  The following diagram will be used in the next two statements:
\begin{displaymath}
  \xymatrix{ X \ar[rr]^f \ar[rd]_p & & Y \ar[ld]^q \\ & \spec k &}
\end{displaymath}
  
First, we derive the homological ``virtual'' Grothendieck-Riemann-Roch formula.
\begin{corollary}
Let $f: X \to Y$ be a quasi-smooth morphism  then one has the commutative diagram
\begin{displaymath}
  \xymatrix{K_0(X) \ar[r]^{\tau_X} & \dChQ(X) \\ K_0(Y) \ar[r]^{\tau_Y } \ar[u]^{\otimes_{\sO_Y} \sO_X} & \dChQ(Y) \ar[u]_{\Td(\mT_{X/Y}) \cap [f^!]_{vir}}}
\end{displaymath}
\end{corollary}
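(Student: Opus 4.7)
The plan is to derive this corollary directly from the bivariance of $\tau$ (Theorem \ref{dch_bivariant}) combined with the orientation comparison (Theorem \ref{grr}). Let $p_X: X \to \spec k$ and $p_Y: Y \to \spec k$ denote the structure morphisms, so that $p_X = p_Y \circ f$. Under the identifications $K_0(X) \cong K([p_X])$ and $K_0(Y) \cong K([p_Y])$ (and analogously for $\dChQ$) established for structure morphisms over a smooth base earlier in the paper, the left vertical arrow of the diagram is exactly bivariant product with the K-theoretic orientation $[\sO_X] \in K([f])$: by the formula for the bivariant product in K-theory (cf.\ Lemma \ref{products_work}), we have $[\sO_X] \cdot [\sF] = [f^\ast \sF \otimes_{\sO_X} \sO_X] = [\sF \otimes_{\sO_Y} \sO_X]$ in $K([p_X])$. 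Similarly the right vertical arrow is bivariant product with the element $\Td(\mT_{X/Y}) \cap [f^!]_{vir} \in \dChQ([f])$.

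With this reinterpretation in hand, I would invoke Theorem \ref{dch_bivariant}, which says that $\tau$ commutes with bivariant products. Applied to the composable pair $(f, p_Y)$ and any $[\sF] \in K([p_Y])$ it yields
\begin{displaymath}
\tau_{p_X}\bigl([\sO_X] \cdot [\sF]\bigr) = \tau_f([\sO_X]) \cdot \tau_{p_Y}([\sF]).
\end{displaymath}
Finally I would apply Theorem \ref{grr} to the quasi-smooth morphism $f$, giving the identification $\tau_f([\sO_X]) = \Td(\mT_{X/Y}) \cap [f^!]_{vir}$ in $\dChQ([f])$. Substituting this into the previous display reads off as the commutativity of the square.

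I do not anticipate any genuine obstacle: the corollary is essentially a repackaging of Theorems \ref{dch_bivariant} and \ref{grr} once the two vertical arrows are recognised as bivariant products with the respective orientations. The only point requiring a small amount of care is the translation between the absolute theories $K_0(-)$ and $\dChQ(-)$ and the bivariant groups $K([p_{-}])$, $\dChQ([p_{-}])$ associated to the structure morphisms; this translation is precisely what is supplied by the identifications recorded in the paper (in particular by Lemma \ref{sameoverafield} and the concluding lemma of the proof of Theorem \ref{dch_bivariant}), so it can be invoked without further argument.
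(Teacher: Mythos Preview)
Your proposal is correct and matches the paper's own argument essentially verbatim: the paper too identifies the left vertical arrow as the bivariant product $K([f]) \otimes K([q]) \to K([p])$ against $[\sO_X]$, invokes that $\tau$ commutes with products (Theorem \ref{dch_bivariant}), and then substitutes $\tau_f([\sO_X]) = \Td(\mT_{X/Y}) \cap [f^!]_{vir}$ from Theorem \ref{grr}. Your additional remarks about the translation between absolute and bivariant groups are accurate and slightly more explicit than the paper, but there is no difference in approach.
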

\begin{proof}
The left vertical morphism is induced by the natural product $K([f]) \otimes K([q]) \to K([p])$.  Since $\tau$ commutes with products, $\tau_{[p]}([\sO_X] \cdot [\sF]) = \tau_{[f]}([\sO_X]) \cdot \tau_{[q]}([\sF])$.  However, by Theorem \ref{grr}, this is just $\Td(\mT_{X/Y}) \cap [f^!]_{vir} \cdot \tau_{[q]}([\sF])$
\end{proof}

Now, the cohomological statement.
\begin{proposition}
Let $f: X \to Y$ be a proper morphism with $X$, $Y$ quasi-smooth.  Then
 \begin{displaymath}
ch(f_\ast(\sP)) \cap \Td(\mT_{Y/k}) \cap [Y]_{vir} = f_\ast(ch(\sP)\cap \Td(\mT_{X/k}) \cap [X]_{vir})
\end{displaymath}
\end{proposition}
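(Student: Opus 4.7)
The plan is to deduce this cohomological identity from Theorem~\ref{grr} together with the fact that $\tau$ is a morphism of bivariant theories (Theorem~\ref{dch_bivariant}). Let $p : X \to \spec k$ and $q : Y \to \spec k$ be the structure morphisms, so that $p = q \circ f$ and both $p, q$ are quasi-smooth; by definition $[X]_{vir} = [p^!]_{vir} \cap [\spec k]$ and similarly for $[Y]_{vir}$. View $[\sP]$ as a class in $K([p])$.

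First, I would separate the Chern character from the virtual class on the $X$-side. The complex $\sP$ also represents a class in $K([id_X])$, and the bivariant product $K([id_X]) \otimes K([p]) \to K([p])$ sends $([\sP],[\sO_X])$ to $[id_X^\ast\sO_X \otimes \sP] = [\sP]$. Applying $\tau$, which commutes with the product by Theorem~\ref{dch_bivariant}, and combining with Theorem~\ref{grr}, gives
\begin{displaymath}
  \tau_p([\sP]) \;=\; \tau_{id_X}([\sP]) \cdot \tau_p([\sO_X]) \;=\; ch(\sP) \cap \Td(\mT_{X/k}) \cap [p^!]_{vir},
\end{displaymath}
where $\tau_{id_X}([\sP]) = ch(\sP)$ follows from the definition of the derived local Chern character: the smooth factor is trivial, and $dch^X_X$ acts on a discrete test scheme as the classical local Chern character, which is precisely the defining property of the operational class $ch(\sP)$ of \S\ref{hot_toddy}. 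The analogous identity applied to $f_\ast \sP$ on $Y$ yields $\tau_q([f_\ast\sP]) = ch(f_\ast\sP) \cap \Td(\mT_{Y/k}) \cap [q^!]_{vir}$.

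Next, I would transport between $X$ and $Y$ via proper pushforward. Because $f$ is proper, Theorem~\ref{dch_bivariant} gives $\tau_q(f_\ast [\sP]) = f_\ast \tau_p([\sP])$ in $\dChQ([q])$. Evaluating both sides on $[\spec k]$ and using that the bivariant pushforward of operational classes restricts to ordinary proper pushforward of Chow cycles, one obtains
\begin{displaymath}
  ch(f_\ast \sP) \cap \Td(\mT_{Y/k}) \cap [Y]_{vir} \;=\; f_\ast\bigl(ch(\sP) \cap \Td(\mT_{X/k}) \cap [X]_{vir}\bigr),
\end{displaymath}
which is the proposition.

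The substantive content already lives in Theorems~\ref{grr} and~\ref{dch_bivariant}, so the proof is largely assembly; the only point to verify carefully is the identification $\tau_{id_X}([\sP]) = ch(\sP)$, which reduces to the classical local Chern character theory together with the defining property of the operational class $ch$ in \S\ref{hot_toddy}. A secondary bookkeeping obstacle is keeping consistent the conventions for $\Td$ of $\mT_{\bullet/\bullet}$ versus $\mL_{\bullet/\bullet}$ and the shifts arising from quasi-smoothness, so that the Todd factors produced by Theorem~\ref{grr} align with the $\Td(\mT_{X/k})$ and $\Td(\mT_{Y/k})$ in the statement.
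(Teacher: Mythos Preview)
Your proof is correct and uses the same two ingredients as the paper---Theorem~\ref{grr} and Theorem~\ref{dch_bivariant}---but organizes them differently. The paper applies Theorem~\ref{grr} to the morphism $f$ itself, obtaining $\tau_f([\sO_X]) = \Td(\mT_{X/Y}) \cap [f^!]_{vir}$; it then pushes forward, caps with $[Y]_{vir}$, post-multiplies both sides by $\Td(\mT_{Y/k})$, and uses the projection formula together with multiplicativity of Todd ($\Td(\mT_{X/Y})\cdot\Td(f^\ast\mT_{Y/k})=\Td(\mT_{X/k})$) and the functoriality $[f^!]_{vir}\cdot[q^!]_{vir}=[p^!]_{vir}$ to reach the absolute form. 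You instead apply Theorem~\ref{grr} directly to the quasi-smooth structure morphisms $p$ and $q$, so the absolute Todd classes $\Td(\mT_{X/k})$ and $\Td(\mT_{Y/k})$ appear immediately and no projection-formula manipulation is needed. Your route is slightly more economical and, incidentally, sidesteps the paper's invocation of $[f^!]_{vir}$ and $\Td(\mT_{X/Y})$, which strictly speaking require $f$ itself to be quasi-smooth---a hypothesis not present in the statement. The paper's route, on the other hand, makes the relative Todd class visible as an intermediate step. Both arguments rely on the same identifications $\tau_{id_X}([\sP])=ch(\sP)$ and $\tau_{id_Y}([f_\ast\sP])=ch(f_\ast\sP)$, the latter requiring $f_\ast\sP\in\perf(Y)$.
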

\begin{proof}
By construction, $\tau_{id_X}([\sP]) =[ch(\sP) \cap -]$ and $\tau_{id_Y}([\sP^\prime]) =[ch(\sP^\prime) \cap -]$ for $\sP \in \perf(X)$ and $\sP^\prime \in \perf(Y)$.  
From the commutativity of $\tau$ with proper pushforward we get the equation 
\begin{align*}
  \tau_{id_Y}([f_\ast][\sP]\cdot[\sO_X])\cap [Y]_{vir} & = [f_\ast] \tau_f([\sP] \cdot [\sO_X]) \cap [Y]_{vir}
\end{align*}
The above comments and the definition of the various operators imply that
\begin{displaymath}
  \tau_{id_Y}([f_\ast][\sP]\cdot[\sO_X])\cap [Y]_{vir} = ch(f_\ast(\sP \otimes \sO_X) \cap [Y]_{vir} = ch(f_\ast(\sP)) \cap [Y]_{vir} 
\end{displaymath}
On the other hand, for the right side, commutativity with products implies
\begin{align*}
[f_\ast] \tau_f([\sP] \cdot [\sO_X]) \cap [Y]_{vir} &=   [f_\ast] \tau_{id_X}([\sP]) \cdot \tau_{f}([\sO_X]) \cap [Y]_{vir} \\
& = [f_\ast]( ch(\sP) \cap \tau_f([\sO_X])) \cap [Y]_{vir} \\
& = [f_\ast](ch(\sP) \cap \Td(\mT_{X/Y}) \cap [f^!]_{vir}) \cap [Y]_{vir}
\end{align*}
and we have the equality 
\begin{displaymath}
  ch(f_\ast(\sP)) \cap [Y]_{vir}  = [f_\ast](ch(\sP) \cap \Td(\mT_{X/Y}) \cap [f^!]_{vir}) \cap [Y]_{vir}
\end{displaymath}
Applying $\Td(\mT_{Y/k}) \cap -$ to both sides yields 
\begin{align*}
  ch^Y_Y(f_\ast([\sP])) \cap \Td(\mT_{Y/k}) \cap [Y]_{vir} & =  [f_\ast](ch([\sP]) \cap \Td(\mT_{X/Y}) \cap [f^!]_{vir}) \cap \Td(\mT_{Y/k}) \cap [Y]_{vir}
\end{align*}
  The projection formula then shows  
\begin{displaymath}
ch(f_\ast(\sP) \cap \Td(\mT_{Y/k}) \cap [Y]_{vir} = f_\ast(ch(\sP) \cap \Td(\mT_{X/Y}) \cap \Td(f^\ast \mT_{Y/k}) \cap [f^!]_{vir}) \cap [Y]_{vir} 
\end{displaymath}
Multiplicativity of the Todd class shows $\Td(\mT_{X/Y})\cap  \Td(f^\ast \mT_{Y/k})  \cap - = \Td(\mT_{X/k}) \cap - $ and therefore 
\begin{align*}
ch^Y_Y(f_\ast(\sP \otimes \sO_X)) \cap \Td(\mL_{Y/k}) \cap [Y]_{vir} =   [f_\ast](ch(\sP) \cap \Td(\mL_{X/k}) \cap [f^!]_{vir}) \cap [Y]_{vir}  
\end{align*}

Lemma \ref{chowsameasfield} ensures that we can think of $[Y]_{vir} \in \dCh(Y)$ or as $[q^!]_{vir} \in \dCh([q])$.   Functorality of virtual Gysin homomorphisms ensure that for $[f^!]_{vir} \in \dCh([f])$ and $[q^!]_{vir} \in \dCh([q])$, then $[p^!]_{vir} = [f^!]_{vir} \cdot [q^!]_{vir} \in \dCh([p]) = \dCh(X)$.  Compatibility of pushforward with products then ensures 
\begin{align*}
f_\ast(ch(\sP) \cap \Td(\mL_{X/k}) ) \cap [Y]_{vir} & = [f_\ast](ch(\sP) \cap \Td(\mL_{X/k}) \cap [f^!]_{vir} ) \cap [q^!]_{vir} \\
& = [f_\ast](ch(\sP) \cap \Td(\mL_{X/k}) \cap [p^!]_{vir}) \\
& = f_\ast(ch(\sP) \cap \Td(\mL_{X/k}) \cap [X]_{vir})
\end{align*}
\end{proof}

\subsection{Comparison with pre-existing results}
\subsubsection{Comparison with Ciocan-Fontanine-Kapranov}
Let $char\;  k = 0$, then the natural equivalence of the (model) category of commutative simplicial $k$-algebras and commutative dg-$k$-algebras allows for overlap with the results in \cite{kapranov_fontanine}.  In particular, we have the following comparison with \cite[Theorem 4.2.3]{kapranov_fontanine}.

Let $X$ be quasi-smooth (over $\spec k$) with structure morphism $p$.  Theorem \ref{grr} shows
\begin{displaymath}
  \tau_{[p]}([\sO_X]) = \Td(\mT_{X/k}) \cap [p^!]_{vir}
\end{displaymath}
as elements of $\dChQ([p])$.  After converting with Lemma \ref{chowsameasfield}, this becomes
\begin{displaymath}
  \tau_{[p]}([\sO_X]) = \Td(\mT_{X/k}) \cap [X]_{vir}
\end{displaymath}
$\tau$ commutes with proper pushforward 
\begin{displaymath}
\iota_{X \ast} \tau_{\widebar X}(\alpha) =\tau_X(\iota_{X \ast} \alpha) 
\end{displaymath}
If $\alpha = \sum (-1)^i[\sH^i(\sO_X)]$, then $\iota_{X\ast} \alpha = [\sO_X]$ so 
\begin{displaymath}
 \iota_{X\ast} \tau_{\widebar X}(\sum (-1)^i[\sH^i(\sO_X)]) = \Td(\mT_{X/k}) \cap [X]_{vir} \in \dChQ(X)
\end{displaymath}
The operator $\iota_{X\ast}$ is an isomorphism (on both the K-theory and Chow theory).  If we consider $[X]_{vir}$ as an element of $\dChQ(\widebar X)$ and apply $\iota^{-1}_{X\ast}$ to this equation, since $\Td(\mT_{X/k}) \cap -$ commutes with proper pushforward we have
\begin{align} \label{kf}
 \tau_{\widebar X}(\sum [\sH^i(\sO_X)]) = \Td(\iota_X^\ast \mT_{X/k}) \cap [X]_{vir} \in A(\widebar X)\otimes \mQ
\end{align}
Moving the invertible operator $\Td(\iota_X^\ast \mT_{X/k})$ to the left side yields the statement of \cite[Theorem 4.2.3]{kapranov_fontanine}.  We remark that the embedding of $X$ into a smooth scheme is inherent in their language.  Since it is not built in our theory, we have the additional verification that this class is, in fact, independent of the embedding used and only depends on the homotopy class of $X$.

Although we will not prove it here, it is our belief that using methods similar to Proposition \ref{dch_explicit}, one can find an explicit comparison with \cite[Theorem 4.3.2]{kapranov_fontanine} without relying on Theorem 4.4.2 loc. cit.

\subsubsection{Comparison with Fantechi-G\"ottsche}

It suffices to show how our theory relates the equality in \cite[Lemma 3.5]{fantechi_gottsche}.  This in turn follows from the perfect complex $\sO_{\widehat X}$ and equation (\ref{kf}).  Let $X \to Y$ be quasi-smooth with $Y$ smooth over $\spec k$.  In \S~\ref{deformation_to_normal_cone} we construct a perfect $\sO_{\dcone}$-module $\sO_{\widehat X}$ supported on $\widebar X \times \mP^1 \subset \dcone$.  Since it is perfect $\sum (-1)^i[\sH^i(\sO_{\widetilde X})] \in K_0(\widebar X \times \mP^1)$.  Let $i_0$ be the natural inclusion of $\{0\} \in \mP^1$ and $i_p$ the natural inclusion for any closed $p \notin \{0, \infty\}$.  $\mA^1$-invariance of $K_0(\widebar X)$ shows $i_p^\ast(\sum (-1)^i[\sH^i(\sO_{\widetilde X})]) = i_0^\ast(\sum (-1)^i[\sH^i(\sO_{\widetilde X})])$ in $K_0(\widebar{X})$. By construction $i_p^\ast(\sum (-1)^i[\sH^i(\sO_{\widetilde X})]) = \sum (-1)^i[\sH^i(\sO_X)]$ while 
\begin{displaymath}
i_0^\ast(\sum (-1)^i[\sH^i(\sO_{\widehat X})]) = \sum (-1)^i[\sH^i(\sO_{\widetilde X})] = \sum (-1)^i[\sT or_{\mV(\iota_X^\ast \mT_{X/Y})}(\sO_{\widebar X}, \sO_{C_{\widebar X} Y})]
\end{displaymath}
The result then follows by applying equation (\ref{kf}).

\appendix
\section{Support material for relative pseudo-coherence}

\begin{lemma}
\label{nicepush}
  If $f: X \to Y$ is an embedding such that $f_\ast \sO_X$ is perfect, then $\sF \in \pscoh(X)$ if and only if $f_\ast \sF \in \pscoh(Y)$.
\end{lemma}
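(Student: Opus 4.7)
The statement is local on $Y$, so I would first reduce to the affine case $f: \spec B \to \spec A$ where $A \to B$ is a map of simplicial commutative rings with $\pi_0(A) \twoheadrightarrow \pi_0(B)$ surjective (since $f$ is a closed embedding) and $B$ perfect as an $A$-module (the hypothesis $f_\ast \sO_X \in \perf(Y)$).

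For the direction $\sF \in \pscoh(X) \Rightarrow f_\ast \sF \in \pscoh(Y)$, the plan is to resolve and push forward. Locally, $\sF$ admits a bounded-above resolution $P^\bullet \to \sF$ by finite free $\sO_X$-modules. Since $f$ is a closed embedding, $f_\ast$ is t-exact (corresponding to restriction of scalars along a surjection on $\pi_0$), so $f_\ast P^\bullet \to f_\ast \sF$ remains an equivalence in each nonpositive truncation. Each $f_\ast(\sO_X^{a_i})$ is a finite direct sum of copies of $f_\ast \sO_X$, which is perfect on $Y$ by hypothesis, and hence each term of $f_\ast P^\bullet$ is a perfect $\sO_Y$-module. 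Since perfect complexes are pseudo-coherent and pseudo-coherent objects form a triangulated subcategory closed under cofibers and the appropriate Postnikov/resolution limits, assembling these gives $f_\ast \sF \in \pscoh(Y)$.

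For the reverse direction $f_\ast \sF \in \pscoh(Y) \Rightarrow \sF \in \pscoh(X)$, I would use the characterization of pseudo-coherence via homotopy groups, which applies in our setting since everything is quasi-projective over a smooth (hence Noetherian) base. The t-exactness of the closed-embedding pushforward gives $\pi_n(f_\ast \sF) = (\widebar f)_\ast \pi_n(\sF)$ as sheaves of $\pi_0(\sO_Y)$-modules, where the module structure on $\pi_n(\sF)$ is restricted along $\pi_0(\sO_Y) \twoheadrightarrow \pi_0(\sO_X)$. Pseudo-coherence of $f_\ast \sF$ yields that each $\pi_n(f_\ast \sF)$ is a coherent $\pi_0(\sO_Y)$-module; via the surjection, this translates into finite generation of $\pi_n(\sF)$ over $\pi_0(\sO_X)$. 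Combined with boundedness-above of $\sF$, which transfers from $f_\ast \sF$ by conservativity of $f_\ast$ on $\QC$, we obtain $\sF \in \pscoh(X)$.

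The main obstacle is the reverse implication. The most natural alternative — taking a free resolution of $f_\ast \sF$ on $Y$, pulling back via $f^\ast$, and reading off pseudo-coherence of $\sF$ — fails because in the derived setting $f^\ast f_\ast \sF \simeq \sF \otimes^L_{\sO_X} (\sO_X \otimes^L_{\sO_Y} \sO_X)$ is not equivalent to $\sF$ (the Tor term is governed by the conormal complex and is nontrivial). The homotopy-group approach sidesteps this but leans on the Noetherianness of our base to move between ``pseudo-coherent'' and ``coherent homotopy groups''; the perfectness hypothesis on $f_\ast \sO_X$ is essential in the forward direction, while the reverse direction uses only that $f$ is a closed embedding plus the underlying finiteness assumptions on $Y$.
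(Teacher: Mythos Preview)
Your proposal is correct. The forward direction is essentially the paper's argument: both use that $f_\ast$ carries perfect objects to perfect objects (from the hypothesis on $f_\ast\sO_X$) together with $t$-exactness and colimit-preservation of the closed-embedding pushforward to conclude that pseudo-coherence is preserved.

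For the reverse direction you take a different route than the paper. The paper runs the SGA6 inductive scheme directly on the definition of $n$-pseudo-coherence: the base step is that $H^0(f_\ast\sF)$ is of finite type over $\sO_{\widebar Y}$ iff $H^0(\sF)$ is of finite type over $\sO_{\widebar X}$ (via the surjection $\pi_0\sO_Y\twoheadrightarrow\pi_0\sO_X$), and one climbs degree by degree. This argument needs no Noetherian hypothesis. Your approach instead passes through the identification ``pseudo-coherent $=$ bounded above with coherent homotopy sheaves,'' which, as you correctly flag, requires a locally Noetherian base. Since this lemma lives in the appendix and is invoked in Section~3 before the paper imposes its standing quasi-projective/smooth-base assumptions, the paper's version has the right generality for how the lemma is actually used; your shortcut is valid and arguably cleaner once those finiteness hypotheses are in force, but it proves a slightly weaker statement than the one written.
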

\begin{proof}
For one direction, a simple adaptation of \cite[Lemma 3.20]{Lowrey_moduli} shows that $f_\ast: \perf(X) \to \perf(Y)$.  Since $f$ is a perfect morphism (it is affine), this implies it commutes with colimits.  These two facts combine to show that $f_\ast: \pscoh(X) \to \pscoh(Y)$.  The other implication follows directly from the method of proof in \cite[III Lemma 1.1.1]{SGA6} since if $\sF \in \QC^{(-\infty, 0]}(X)$ is $1$-pseudo-coherent then $f_\ast \sF$ $0$-pseudo-coherent if and only if $H^0(f_\ast \sF)$ is a finite type $\sO_{\widebar{Y}}$-module if and only if $H^0(\sF)$ is a finite type $\sO_{\widebar X}$-module if and only if $\sF$ is $0$-pseudo-coherent.
\end{proof}

\begin{lemma}
\label{closedequiv}
Given the commutative diagram
  \begin{displaymath}
    \xymatrix{ Z_1 \ar[rd]_{f} & X \ar[l]_i \ar[r]^{j} & Z_2 \ar[ld]^{f^\prime} \\ & Y &}
  \end{displaymath}
with horizontal arrows closed and vertical arrows smooth, then $i_\ast \sF$ is pseudo-coherent if and only if $j_\ast \sF$ is pseudo-coherent. 
\end{lemma}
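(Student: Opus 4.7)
The plan is to reduce both pseudo-coherence conditions to the same intrinsic condition on $\sF$ viewed as an object of $\QC(X)$, exploiting only the fact that $i$ and $j$ are closed embeddings (hence affine morphisms of derived schemes). Strikingly, the fiber product $Z_1 \times_Y Z_2$ and the smoothness of $f, f^\prime$ will play no direct role in the argument itself; the smoothness hypothesis serves mainly to put us in the Noetherian geometric setting under which the cohomological characterization of pseudo-coherence is available.

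First I would record that for any closed embedding $h: U \to V$ of derived schemes, the pushforward $h_\ast$ is t-exact on $\QC$: for each integer $k$,
\[
H^k(h_\ast \sG) \;\cong\; \widebar{h}_\ast H^k(\sG)
\]
as $\sO_{\widebar V}$-modules, where $\widebar h: \widebar U \to \widebar V$ is the induced closed immersion on underlying classical schemes. This rests on closed embeddings of derived schemes being affine morphisms, so that their pushforward commutes with the formation of homotopy sheaves. Next, since $\widebar h$ is a classical closed immersion of Noetherian schemes, $\widebar h_\ast$ both preserves and reflects coherence of $\sO$-modules. Combining these with the cohomological characterization of pseudo-coherence already invoked in Lemma~\ref{nicepush} (following \cite[III Lemma 1.1.1]{SGA6})---namely, that an object of $\QC^{\leq 0}$ is pseudo-coherent if and only if each of its cohomology sheaves is coherent---one obtains: $i_\ast \sF$ is pseudo-coherent on $Z_1$ if and only if $\sF$ is bounded above with coherent $H^k(\sF)$ on $\widebar X$ for all $k$, i.e.\ if and only if $\sF$ itself is pseudo-coherent on $X$. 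The symmetric argument applied to $j$ gives $j_\ast \sF$ pseudo-coherent on $Z_2$ if and only if $\sF$ is pseudo-coherent on $X$. Chaining these two equivalences produces the lemma.

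The step I expect to require the most care is the t-exactness identity $H^k(h_\ast \sG) \cong \widebar h_\ast H^k(\sG)$ in the $\infty$-categorical framework of derived schemes; classically this is immediate from affineness, but some bookkeeping is required to transcribe it to the derived setting. If this direct approach turned out to be unsatisfactory, a more geometric alternative would be to form $W := Z_1 \times_Y Z_2$ together with the natural map $l := (i, j): X \to W$ and the factorization $X \xrightarrow{(\mathrm{id}, j)} X \times_Y Z_2 \xrightarrow{i \times \mathrm{id}} W$, in which the first arrow is a section of a smooth morphism (hence a regular closed embedding with perfect pushforward of $\sO$) and the second is a base change of the closed embedding $i$; one would then attempt to compare $l_\ast \sF$ on $W$ with $i_\ast \sF$ and $j_\ast \sF$ by pushing forward along the smooth projections $q_1, q_2$. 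The main obstacle in that geometric route is that the base change $i \times \mathrm{id}$ need not have perfect pushforward of $\sO$ a priori, so one cannot directly apply Lemma~\ref{nicepush} to conclude $l_\ast \sO_X$ is perfect on $W$ without additional argument.
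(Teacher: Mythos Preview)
Your main approach---reducing both conditions to ``$\sF$ is bounded above with coherent cohomology on $\widebar X$''---is valid exactly when the schemes are locally Noetherian, since only then does pseudo-coherence agree with that cohomological criterion. The claim that smoothness of $f, f'$ supplies Noetherianness is not correct: smoothness gives local finite presentation over $Y$, but says nothing about $Y$ itself. (Nor does Lemma~\ref{nicepush} invoke the blanket characterization you attribute to it; its SGA6-style argument is inductive on the level of pseudo-coherence and uses the perfectness hypothesis on $f_\ast\sO_X$ at each step.) In the paper's applications everything sits in $\dsch_{/B}$ and is Noetherian, so your shortcut would go through there; but as a proof of the lemma in the generality stated there is a gap: absent Noetherian hypotheses, ``$\sF$ pseudo-coherent on $X \Rightarrow i_\ast \sF$ pseudo-coherent on $Z_1$'' can fail whenever $i_\ast \sO_X$ is not itself pseudo-coherent, and nothing in the hypotheses prevents that.

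The paper's proof begins with your ``geometric alternative''---form $W = Z_1 \times_Y Z_2$ and the diagonal $l = (i,j): X \to W$---but resolves the obstacle differently than via the projections. By symmetry it suffices to compare $i_\ast \sF$ on $Z_1$ with $l_\ast \sF$ on $W$; this is an instance of the same lemma over the base $Z_1$, with one of the two smooth morphisms equal to the identity. So one is reduced to a triangle $X \hookrightarrow Z \to Y$ with $Z \to Y$ smooth and $X \hookrightarrow Y$ closed, and must compare the two pushforwards of $\sF$. Working locally, the smooth $Z \to Y$ factors through a quasi-smooth closed embedding $k: Z \hookrightarrow \mA^n \times Y$; since $k_\ast \sO_Z$ is perfect, Lemma~\ref{nicepush} replaces $Z$ by $\mA^n \times Y$. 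Now $X \hookrightarrow \mA^n \times Y$ is a section of the projection over $X \hookrightarrow Y$, and affineness lets one extend it to a section $s: Y \to \mA^n \times Y$. Sections of smooth morphisms are quasi-smooth, so $s_\ast \sO_Y$ is perfect, and Lemma~\ref{nicepush} applied to $s$ finishes: the pushforward of $\sF$ to $\mA^n \times Y$ is $s_\ast$ of its pushforward to $Y$, and $s_\ast$ preserves and reflects pseudo-coherence. The section-extension trick is precisely what circumvents the non-perfectness of the original closed embeddings.
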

\begin{proof}
This is a direct port of \cite[1.1.4]{SGA6}.  We have a factorization of this diagram 
\begin{displaymath}
    \xymatrix{ & X \ar[ld]_i \ar[rd]^{j} \ar[d]^{i \times j} & \\ Z_1 \ar[rd]_{f} & Z_1 \times_Y Z_2  \ar[r] \ar[l] & Z_2 \ar[ld]^{f^\prime} \\ & Y &}
\end{displaymath}
where $i \times j$ is a closed embedding and the horizontal morphisms are smooth.  Symmetry dictates that we are reduced to showing $i_\ast \sF$ is pseudo-coherent if and only if $(i \times j)_\ast \sF$ is pseudo-coherent.  However, this is just the original case with $f^\prime = id$. 
The question is local, so we have reduced to the case

\begin{displaymath}
  \xymatrix{ & Z \ar[d]^f \\ X \ar[r]^i \ar[ur]^j & Y}
\end{displaymath}
where all schemes are affine, $f$ is smooth, and $i, j$ are closed embeddings.  The morphism, being smooth means that $Z$ is finite type over $Y$, and there exists a quasi-smooth embedding $k: Z \to \mA^n \times Y$ (a priori, one just uses the finite type to get an embedding;  analyzing the distinguished triangle arising in the cotangent complexes shows it is quasi-smooth).  By Proposition \ref{localnature}, $k_\ast \sO_Z$ is a perfect complex on $\mA^n \times Y$ and by utilizing Lemma \ref{nicepush}, we can assume that $Z = \mA^n \times Y$ with $j$ a section of $f$ over $X$.  Using that everything is affine, we can extend this to a section $s$ over $Y$:  let $X = \spec A$ and $Y = \spec B$,  then
\begin{align*}
  \Map(X, Z) \cong & \Map(B[x_1, \ldots, x_n], A)\\ & \cong \Map(Sym \oplus^n B, A) \\ & \cong \Map_{B\textrm{-mod}}(\oplus^n B, A)
\end{align*}
Likewise for $\Map(Z, Y)$.  Thus, an extension corresponds to a lift in $B$-mod:
\begin{displaymath}
  \xymatrix{ & \oplus^n B \ar[d] \ar@{-->}[dl] \\ B \ar[r] & A}
\end{displaymath}
This lift is possible since $\pi_0(\cofib (B \to A)) \cong 0$ and $\oplus^n B$ is projective.

The result then follows from Lemma \ref{nicepush} and the fact that the section is quasi-smooth.
\end{proof}

\begin{lemma}
\label{ifandonlyif}
Let the following diagram be Cartesian, 
 \begin{displaymath}
    \xymatrix{X^\prime\ar[d]^{f} \ar[r]^{i^\prime} & Y^\prime \ar[d]^{g} \\ X \ar[r]^{i} &Y}
  \end{displaymath}
with $i$ a closed immersion and $g$  $fppf$.  Then $i_{\ast}\sF \in \pscoh(Y)$ if and only if $i^{\prime}_{\ast} f^\ast \in \pscoh(Y^\prime)$.
\end{lemma}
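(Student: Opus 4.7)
The plan is to reduce the statement to an $fppf$-descent result for pseudo-coherence, and then handle the descent by the standard inductive technique used throughout this section.

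First I would invoke base change. Since $i$ is a closed immersion, it is affine and hence a perfect morphism (or this follows directly from Lemma \ref{everythingsperfect}), and the Cartesian square above yields the natural isomorphism $g^\ast i_\ast \sF \cong i^\prime_\ast f^\ast \sF$ in $\QC(Y^\prime)$. Writing $\sG := i_\ast \sF$, the lemma thus reduces to the assertion that for an $fppf$ morphism $g: Y^\prime \to Y$, $\sG \in \QC(Y)$ is pseudo-coherent if and only if $g^\ast \sG \in \QC(Y^\prime)$ is.

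The forward direction is immediate: pseudo-coherence is stable under flat pullback, since for any local approximation $\sP \to \sG$ by a strict perfect complex with $\tau_{\geq -n}(\sP) \xrightarrow{\sim} \tau_{\geq -n}(\sG)$, the exactness of $g^\ast$ produces an approximation $g^\ast \sP \to g^\ast \sG$ with the same property.

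For the backward direction I would argue exactly as in the proof of Lemma \ref{nicepush}, adapting \cite[III Lemma 1.1.1]{SGA6}. The argument proceeds by induction on the connectivity level, using the following two ingredients: (a) a complex $\sH \in \QC^{(-\infty,0]}$ is $0$-pseudo-coherent if and only if $H^0(\sH)$ is a finite type $\sO_{\widebar{Y}}$-module, and (b) under a faithfully flat quasi-compact morphism, the property of being a finite type $\sO_{\widebar Y}$-module descends from $Y^\prime$ to $Y$ (classical $fppf$-descent for quasi-coherent sheaves). The inductive step requires lifting the existence of a strict perfect approximation $\sP_n \to \sG$ of amplitude $[-n,0]$ from $Y^\prime$ to $Y$; faithfully flat descent for finitely presented modules together with the five-lemma applied to $\tau_{\geq -n}$ allows one to descend $\sP_n$ and then iterate to produce approximations to any desired depth.

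The main obstacle is the descent step itself: one must ensure that the local strict perfect approximations on $Y^\prime$ can be patched and descended to $Y$, rather than merely checking that the cohomology sheaves are finitely presented. This is where the $fppf$ (and not merely flat) hypothesis is essential, as it furnishes effective descent for finite presentation and hence for $0$-pseudo-coherence in each inductive step.
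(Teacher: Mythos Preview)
Your approach is essentially the same as the paper's: use base change along the closed immersion to identify $i'_\ast f^\ast\sF$ with $g^\ast i_\ast\sF$, observe that pullback preserves pseudo-coherence for the easy direction, and reduce the other direction to $fppf$-descent of pseudo-coherence. The paper simply cites \cite[Lemma 4.2]{Lowrey_moduli} for that descent step, whereas you sketch its proof.

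One small correction to your sketch of the descent argument: in the inductive step you should not try to \emph{descend} the strict perfect approximation $\sP_n$ from $Y'$ to $Y$. The standard argument (as in \cite[III, Lemme 1.1.1]{SGA6}) runs the other way: working locally on $Y$, use $fppf$-descent for finite type modules to conclude that $H^0(\sG)$ is finite type (since $H^0(g^\ast\sG)\cong g^\ast H^0(\sG)$ by flatness), then choose a surjection $\sO_Y^{\oplus r}\twoheadrightarrow H^0(\sG)$, lift it to a map $\sO_Y^{\oplus r}\to\sG$, and pass to the fiber. The fiber's pullback to $Y'$ is again pseudo-coherent by the two-out-of-three property, and you iterate. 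So the approximation is constructed on $Y$ from the start; faithful flatness is used only to verify the finite-type condition at each stage, not to transport complexes across $g$.
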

\begin{proof}
Since closed immersions are proper, we can apply base change and have $g^\ast i_\ast \sF \cong i^\prime_\ast f^\ast \sF$. This shows the if implication since $f^\ast$ preserves pseudo-coherence.  The only if follows by proper base change and \cite[Lemma 4.2]{Lowrey_moduli}.    
\end{proof}

\bibliography{./biblio}
\bibliographystyle{halpha}

\end{document}